\theoremstyle{definition} \newtheorem{defn}{Definition}[section]
\theoremstyle{remark} \newtheorem{rem}[defn]{Remark}
\theoremstyle{remark} \newtheorem*{rem*}{Remark}
\theoremstyle{plain} \newtheorem{thm}[defn]{Theorem}
\theoremstyle{plain} \newtheorem{lem}[defn]{Lemma}
\theoremstyle{plain} \newtheorem{prop}[defn]{Proposition}
\theoremstyle{plain} 
\theoremstyle{remark} 
\theoremstyle{remark} 
\theoremstyle{remark} 
\numberwithin{equation}{section} 
\newcommand{\mc}{\mathcal}
\newcommand{\x}{\times}
\newcommand{\R}{\mathbb{R}}
\newcommand{\N}{\mathbb{N}}
\newcommand{\pr}{\mathbb{P}}
\newcommand{\E}{\mathbb{E}}
\newcommand{\Exp}{\operatorname{Exp}}
\newcommand{\ind}{\mathbf{1}}
\begin{document}		
	\title{Weak Uniqueness for the Stochastic Heat Equation Driven by a Multiplicative Stable Noise}	
	\author{Sayantan Maitra}
	\thanks{Indian Statistical Institute, Bangalore Centre, 8th Mile Mysore Road, Bengaluru - 560059, Karnakata. \\ E-mail: \href{mailto:sayantanmaitra123@gmail.com}{sayantanmaitra123@gmail.com} }

	\keywords{Weak uniqueness; Stochastic heat equation; Stable noise; Martingale problem; Approximating duality}
	\subjclass[2020]{60H15; 35R60; 60J68}
	\maketitle
	
	\markright{WEAK UNIQUENESS FOR THE STOCHASTIC HEAT EQUATION}
	
	\begin{abstract}
	We consider the stochastic heat equation 
	\begin{align}\label{eq:main_abs} 
		\frac{\partial Y_t(x)}{\partial t}  = \frac{1}{2} \Delta Y_t(x) + Y_{t-}(x)^{\beta} \dot{L}^{\alpha}_{x,t},  \tag{$\star$}
	\end{align}
	with $t \ge 0$, $x \in \R$ and $L^{\alpha}$ being an $\alpha$-stable white noise without negative jumps. Under appropriate non-negative initial conditions, when $\alpha \in (1,2)$ and $\beta \in (\frac{1}{\alpha},  1)$ we prove that weak uniqueness holds for \eqref{eq:main_abs} using the approximating duality approach developed by Mytnik \cite{mytnik98}.\\		
	\end{abstract}

	\section{Introduction}
	
	In this paper we study the stochastic equation 
	\begin{align}\label{eq:main_spde}
		\frac{\partial Y_t(x)}{\partial t} & = \frac{1}{2} \Delta Y_t(x) + Y_{t-}(x)^{\beta} \dot{L}^{\alpha}_{x,t}, \qquad t \geq 0, x \in \R		
	\end{align} where $L^{\alpha}$ is a stable noise of index $\alpha$ without negative jumps and $\Delta = \frac{\partial ^2}{\partial x^2}$. We show that its solutions are unique in law when $1<\alpha<2$, $0<\beta <1$ and $1<\alpha\beta$ (see Theorem \ref{thm:main_spde}). Proving uniqueness in law, also called \textit{weak uniqueness}, is an important step for establishing that a model of an underlying system of interacting particles converges to a limit. 
	
	When $\alpha=2$ the above equation is similar to the following SPDE
	\begin{align}\label{eq:density_SBM}
		\frac{\partial Y_t(x)}{\partial t} = \frac{1}{2} \Delta Y_t(x) + Y_{t}(x)^{\beta} \dot{W}_{x,t}
	\end{align}
	where $\dot{W}$ is the Gaussian space-time white noise. When $\beta = \frac{1}{2}$ this describes the density process of the super-Brownian motion (SBM) in $\R$ which can be obtained as the scaling limit of interacting branching Brownian motions. The weak uniqueness of \eqref{eq:density_SBM} with $\beta = \frac{1}{2}$ follows from the martingale problem formulation of SBM using duality (see \cite[Theorem II.5.1]{perkins02}). In the $\beta \in (\frac{1}{2},1)$ case, the weak existence of \eqref{eq:density_SBM} was proved by Mueller, Perkins \cite{mp92} and the weak uniqueness was established by Mytnik \cite{mytnik98}. The question of pathwise uniqueness of \eqref{eq:density_SBM} for $\beta > \frac{3}{4}$ was settled by Mytnik and Perkins in 2011 \cite{myt-per11}. Some negative results are also known: \cite{mmp14} proved pathwise non-uniqueness of solutions to \eqref{eq:density_SBM} for $\beta \in (0, \frac{3}{4})$ and \cite{bmp10} showed pathwise non-uniqueness for $\beta \in (0, \frac{1}{2})$ with an added non-trivial drift. 		  
	
	Let us now consider \eqref{eq:main_spde} where $x \in \R^d$.  For $\alpha \neq 2$, Mueller \cite{mueller98} proved a certain short time (strong) existence of solution to (\ref{eq:main_spde}) under the relations $d < \frac{2(1-\alpha)}{\alpha\beta - (1-\alpha)}, \alpha \in (0,1)$. The weak existence was shown by \cite{mytnik02} under the relations $0< \alpha \beta < \frac{2}{d}+1$, $1 < \alpha < \min(2, \frac{2}{d}+1)$. When $d=1$ and $\alpha\beta =1$ it is known that \eqref{eq:main_spde} describes the density of the super-Brownian motion with $\alpha$-stable branching mechanism; see \cite{myt-per06} for more details. The weak uniqueness for this case was proved in \cite{mytnik02} while the same for the general case was left open (see \cite[Remark 5.9]{mytnik02}). As stated earlier in our main result we resolve the question for the case $d=1$, $1<\alpha<2$, $0<\beta <1$ and $1<\alpha\beta$. 
	
	It is known that pathwise uniqueness implies weak uniqueness for \eqref{eq:main_spde}. But as the coefficient of the noise term in  \eqref{eq:main_spde} is not Lipschitz, standard techniques such as Gr\"{o}nwall's inequality cannot be used to prove pathwise uniqueness of solutions. However, more recently Yang and Zhou \cite{yz17} have established pathwise uniqueness for \eqref{eq:main_spde} in the regime $\frac{2(\alpha -1)}{(2-\alpha)^2} < \beta < \frac{1}{\alpha}+\frac{\alpha -1}{2}$ and $d=1$. This region partially, although not fully, overlaps with the one stated in Theorem \ref{thm:main_spde}. See the Figure \ref{fig:parameter_space} for an illustration of the $(\alpha, \beta)$ parameter space. 
	
	\begin{figure}[t]\label{fig:parameter_space}
		\includegraphics[width=8cm]{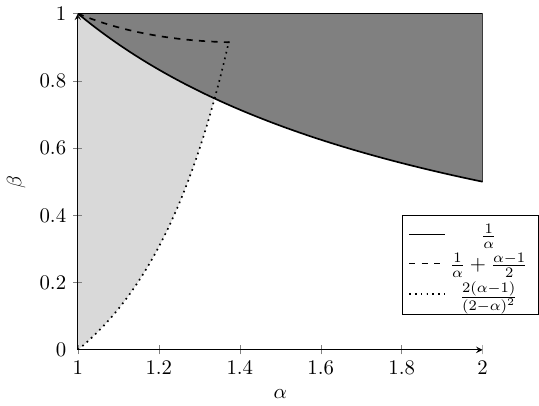}
		\centering
		\caption{Mytnik proved weak uniqueness for of \eqref{eq:main_spde} when $\beta = \frac{1}{\alpha}$. Yang, Zhou showed pathwise uniqueness of this equation when $(\alpha, \beta)$ falls in the light gray region. Our main result, Theorem \ref{thm:main_spde}, proves weak uniqueness for \eqref{eq:main_spde} when $(\alpha,\beta)$ is in the dark gray region.}
		\centering
	\end{figure}
	
	In the next subsection we precisely define our model and state the main theorem.
	
	\subsection{Model and Main Result}
	
	To define our model and state the main result we need to introduce the following notations. Let $\lVert f \rVert_{\infty} = \sup_{x \in \R} |f(x)|$ and  $\lVert f\rVert_p = \left(\int_{\R} |f(x)|^p \, d x\right)^{1/p}$ for $p \ge 1$ be the norms of the spaces ${\bf{L}}^{\infty}(\R)$ and ${\bf{L}}^p(\R)$ respectively. The norms on ${\bf{L}}^{\infty}([0,T]\x R)$ and ${\bf{L}}^p ([0,T]\x \R)$ are defined similarly. By ${\bf{L}}^p_{loc}(\R_+ \x \R)$ we will mean the collection of measurable functions $f:\R_+ \x \R \to \R$ such that $\int_0^T \int_{\R} |f(s,x)|^p \, d s \, dx < \infty$ for all $T \in (0, \infty)$. We also define $\mc{S}\equiv\mc{S}(\R)$ to be the space of all smooth rapidly-decreasing functions defined on $\R$ whose derivatives of all orders are also rapidly-decreasing. The subsets of ${\bf{L}}^p(\R)$ and $\mc{S}(\R)$ containing all non-negative functions are denoted by ${\bf{L}}^p(\R)_+$ and $\mc{S}_+ \equiv \mc{S}(\R)_+$ respectively. 
	
	Let $M_F \equiv M_F(\R)$ be the set of all non-negative finite measures on the real line, $\R$, with the topology of weak convergence. We denote the space of all c\'{a}dl\'{a}g paths in $M_F$ as $D \equiv D ([0, \infty), M_F)$. This space is equipped with the topology of weak convergence and $\mc{B}(D)$ denotes the Borel $\sigma$-algebra on $D$. Similarly, $\mc{B}(\R)$ denotes the $\sigma$-algebra of all Borel measurable subsets of $\R$ and for $E \in \mc{B}(\R)$ we  use $|E|$ for the Lebesgue measure of $A$.  For $\mu \in M_F$ and $\varphi \in \mc{S}$ we denote $\langle \mu, \varphi \rangle  = \int_{\R} \varphi \, d \mu$. We will often identify a measurable function $f: (\R, \mc{B}(\R)) \to \R$ with $f(x) \, dx$, where $dx$ denotes the Lebesgue measure. In this case, $\langle f, \varphi \rangle : = \int_{\R} f(x) \varphi(x)\,  dx$.
	
	\begin{defn}\label{def:stable_mart_measure}
		Let $\alpha \in (0,2)$. Suppose for each $E \in \mc{B}(\R)$ with $|E| < \infty$, $\{L^{\alpha}_t(E)\}_{t \geq 0}$ is a martingale defined on some filtered probability space $(\Omega, \mc{F}, (\mc{F}_t)_{t \ge 0}, \pr)$ and 
		\begin{align}\label{eq:stable_mart_measure_def}
			\E \exp{(-\lambda L^{\alpha}_t(E))} = \exp{(\lambda^{\alpha} t |E|)}, 
		\end{align}
		for all $t\ge 0$, $\lambda \ge 0$. Then we call $L^{\alpha}$ an \textit{$\alpha$-stable martingale measure on $[0,\infty) \times \R$ without negative jumps}.
	\end{defn}
	
	Observe that $L^{\alpha} (E\times [0,t]) : = L^{\alpha}_t (E)$ is indeed a martingale measure in the sense of Walsh \cite[Chapter 2]{walsh86}.
	
	\begin{defn}\label{def:weak_form}
		Let $Y_0 \in M_F$. Given an $\alpha$-stable martingale measure $L^{\alpha}$ on $(\Omega, \mc{F},(\mc{F}_t)_{t \ge 0}, \pr)$ without negative jumps, a two-parameter stochastic process $\{Y_t (x)\}_{t\ge 0, x \in \R}$ defined on the same probability space is said to solve \eqref{eq:main_spde} if the following hold.
		\begin{itemize}
			\item[(i)] $Y$ is adapted to $(\mc{F}_t)_{t\ge 0}$.
			
			\item[(ii)] The map $t \mapsto Y_{t}(x) \, dx$ defines an $M_F$-valued c\'{a}dl\'{a}g process. In other words, $Y \in D ([0, \infty), M_F)$ a.s..
			
			\item[(iii)] For all $\psi \in \mc{S}(\R)$  and $t \geq 0$,
			\begin{align}\label{eq:main_spde_weak_form}
				\langle  Y_t, \psi \rangle  =  \langle  Y_0, \psi  \rangle  + \int_0^t \langle  Y_s, \frac{1}{2}\Delta \psi  \rangle \,d s  +  \int_{s \in [0,t]} \int_{x\in \R} (Y_{s-}(x))^{\beta} \psi(x)  L^{\alpha} (d x, \, d s), 			
			\end{align} 	
		\end{itemize}
	\end{defn}	
	Recall that \cite[Theorem 1.5]{mytnik02} guarantees the weak existence of such a solution $Y \equiv (Y_t)_{t \ge 0}$. Also, it was shown in \cite[Proposition 4.1]{mytnik02} that 
	\begin{align*}
		Y \in D([0, \infty), M_F) \cap {\bf{L}}^{\rho}_{loc}(\R_+ \x \R)  \quad  (1<\rho<3).
	\end{align*}  
	
	We need to recall some notions of existence and uniqueness for solutions to \eqref{eq:main_spde} that will be used in this article.  	
	\begin{itemize}
		\item (\ref{eq:main_spde}) is said to admit a \textit{weak solution} with initial condition $Y_0$ if there exists a filtered probability space $(\Omega, \mc{F}, \{\mc{F}_t\}_{t\ge 0}, \pr)$ and an $\{\mc{F}_t\}$-adapted pair $(Y,L^{\alpha})$ such that $L^{\alpha}$ satisfies (\ref{eq:stable_mart_measure_def}) and (\ref{eq:main_spde_weak_form}) holds. 
		
		
		\item \textit{Weak uniqueness} holds for (\ref{eq:main_spde}) if whenever the pairs $(Y, L^{\alpha})$ and $(\tilde{Y}, \tilde{L}^{\alpha})$ satisfy (\ref{eq:main_spde_weak_form}) with the same initial condition, they have the same finite dimensional distributions.
		
		
	\end{itemize}
	
	Our main result is the following.	
	
	\begin{thm}\label{thm:main_spde}
		Assume that $1<\alpha<2$ and $\frac{1}{\alpha}<\beta < 1$ and $Y_0 \in M_F$. Then weak uniqueness holds for solutions to \eqref{eq:main_spde_weak_form}, i.e. if $(Y,L^{\alpha})$ and $(\tilde{Y}, L^{\alpha})$ are both weak solutions of \eqref{eq:main_spde_weak_form} and $Y_0 = \tilde{Y}_0$, then $Y$ and $\tilde{Y}$ have the same finite dimensional distributions.
	\end{thm}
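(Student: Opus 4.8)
The plan is to follow Mytnik's approximating duality scheme from \cite{mytnik98}, adapted to the stable-noise setting. The usual duality identity that works for $\alpha\beta=1$ (super-processes) fails here because $\beta\neq 1/\alpha$, so one cannot hope to find an \emph{exact} dual. Instead I would look for a function-valued process $U$ — typically solving a (deterministic or stochastically perturbed) PDE of the form $\partial_t U = \tfrac12\Delta U - c\,U^{\alpha\beta}$ or a perturbation thereof — together with a correction term $\Gamma$, so that the expression $\exp(-\langle Y_t, U_{T-t}\rangle)$ is, up to an additive error $\Gamma$, a martingale. Concretely, I would apply It\^o's formula for the stable martingale measure to $t\mapsto \exp(-\langle Y_t, U_{T-t}\rangle)$, using the weak formulation \eqref{eq:main_spde_weak_form}: the $\tfrac12\Delta$ drift cancels against $\partial_t U$, the jump term produces (via the L\'evy measure of $L^\alpha$, which has the $y^{-1-\alpha}$ tail) a contribution of order $\langle Y_{s-}, (Y_{s-})^{\alpha\beta-1} U^\alpha\rangle$ type, and one chooses the nonlinearity in the equation for $U$ to kill the leading part. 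What remains — coming precisely from the mismatch between $\beta\alpha$ and the power that would appear for a true super-process dual — is the error term whose expectation must be controlled.

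The key steps, in order, are: (i) set up the candidate dual $U$ and rigorously establish existence, positivity, regularity and the relevant moment/integrability bounds for $U$ on $[0,T]$, in particular that $\langle Y_t, U_{T-t}\rangle$ and the jump integrals are well defined given that $Y\in D([0,\infty),M_F)$ and $\varphi\in\mc S_+$; (ii) derive the approximate duality identity $\E\exp(-\langle Y_T, \varphi\rangle) = \exp(-\langle Y_0, U_T\rangle) + \E[\text{error}]$ by the It\^o-type computation sketched above, taking care with the discontinuity ($Y_{s-}$ vs $Y_s$) and with stochastic-Fubini/localization arguments; (iii) bound the error term uniformly and show it tends to $0$ along an approximating sequence (this is where ``approximating'' enters — one introduces a small parameter, e.g. smoothing the initial data of the dual or adding a regularizing perturbation, and shows the error vanishes in the limit); (iv) conclude that $\E\exp(-\langle Y_T,\varphi\rangle)$ is determined by $Y_0$ for a rich enough family of test functions $\varphi$ (e.g. all of $\mc S_+$, or an appropriate cone), hence the one-dimensional marginals of $Y_T$ are determined; (v) upgrade from one-dimensional marginals to finite-dimensional distributions via the Markov property of solutions to the martingale problem \eqref{eq:main_spde_weak_form} — i.e. show any solution is a Markov process and iterate the duality conditionally.

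The main obstacle I expect is step (iii): controlling the error term in the approximate duality. Because $\alpha\in(1,2)$ the noise has heavy-tailed positive jumps and only moments of order $<\alpha$ are finite, so the a priori moment bounds on $Y_t$ are weak; meanwhile the error term involves a negative power or a nonlinear functional of $Y_{s-}$ (something like $\langle Y_{s-}, (Y_{s-})^{\alpha\beta-1}U_{T-s}^\alpha\rangle$ minus its ``self-dual'' counterpart), which is delicate to estimate near points where the density of $Y_{s-}$ is small or large. One has to combine: sharp regularity estimates for the density of $Y$ (continuity/H\"older bounds in the interior), the smoothing effect of the heat semigroup on $U$, and careful bookkeeping of the exponent $\alpha\beta\in(1,\alpha)$ — the constraint $\beta>1/\alpha$ (so $\alpha\beta>1$) is exactly what makes the relevant power positive and the error integrable, mirroring the role of $\beta>1/2$ in Mytnik's original argument. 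Subsidiary technical points — justifying the stochastic Fubini theorem for the stable martingale measure, the localization to make the exponential martingale a true martingale, and the passage to the limit in the regularization — are routine in spirit but will require care given the jumps.
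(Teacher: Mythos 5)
Your overall plan is the right one and matches the paper's architecture: reduce to uniqueness of one-dimensional marginals for the martingale problem, build an approximating dual in the spirit of Mytnik, control the error using moment bounds of order strictly less than $\alpha$, and observe that $\alpha\beta>1$ is exactly what makes the error integrable and vanishing. You also correctly anticipate the two genuine technical inputs the paper supplies: a pointwise moment estimate $\sup_x\E(Y_t(x)^q)\le C_1e^{C_2t}$ for $q<\alpha$ (the paper's Proposition \ref{prop:moment_est_Y_2}, proved via the Poisson-random-measure mild form and a singular Gr\"onwall lemma), and the upgrade of $M^Y(\psi)$ from a local martingale to a true martingale via a class-DL argument.

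The gap is in your conception of the dual. You propose a \emph{deterministic} function-valued dual $U$ (possibly "stochastically perturbed"), solving $\partial_t U=\tfrac12\Delta U-cU^{\alpha\beta}$ or similar, with the nonlinearity chosen to "kill the leading part" of the jump contribution. This cannot work: the It\^o computation applied to $\exp(-\langle Y_t,U_{T-t}\rangle)$ produces from $Y$'s noise the term $\langle Y_{s-}^{\alpha\beta},U^{\alpha}\rangle$, which is nonlinear in $Y$, whereas any deterministic drift of $U$ contributes only terms of the form $\langle Y_{s-},\,\cdot\,\rangle$, linear in $Y$. No choice of deterministic nonlinearity cancels $\langle Y^{\alpha\beta},U^{\alpha}\rangle$ even approximately; the mismatch is $O(1)$, not small. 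The actual construction (following \cite{mytnik02}) takes $Z^{(n)}$ to be a \emph{random} measure-valued jump process: between jumps it follows the PDE $v_t=P_t\mu-\int_0^tP_{t-s}(b_nv_s^{\alpha})\,ds$ with the \emph{linear-compensator} coefficient $b_n=\tfrac{\alpha\beta}{\Gamma(2-\alpha\beta)}n^{\alpha\beta-1}$, and at random times it jumps by atoms $S_i^n\delta_{U_i^n}$ whose sizes are distributed like an $\alpha\beta$-stable L\'evy measure truncated below at $1/n$. The jumps of $Z^{(n)}$ enter the exponential duality function as factors $e^{-\lambda Y(x)}-1$, and the identity $y^{\alpha\beta}=\eta\,g(1/n,y)+\eta\int_{1/n}^{\infty}(e^{-\lambda y}-1)\lambda^{-\alpha\beta-1}d\lambda+b_ny$ is what reconstructs $Y^{\alpha\beta}$ from the compensated jump integral plus the PDE drift, leaving only the small-jump remainder $g(1/n,Y)\lesssim n^{-(\alpha-\alpha\beta)/2}Y^{\alpha(\beta+1)/2}$ as the error. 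The small parameter is thus the jump-size cutoff $1/n$ in the dual's driving noise, not a smoothing of initial data or a regularizing perturbation of a deterministic equation. Without this random dual, steps (ii)--(iii) of your plan do not close. (Two smaller points: no interior H\"older regularity of the density of $Y$ is needed, only the pointwise moment bound; and the passage from the approximate duality to uniqueness of one-dimensional laws is handled by citing \cite[Theorem 1.3]{mytnik96} rather than re-proving it.)
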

	
	We will now describe our approach for proving this result.
	
	\subsection{Proof Strategy}
	
	An approach for showing weak uniqueness of stochastic equations is the following. First, one shows that solutions to \eqref{eq:main_spde_weak_form} are equivalently also solutions of an appropriate martingale problem. Then it is enough to show that any two solutions to the martingale problem have the same one-dimensional distributions (cf. \cite[Theorem 4.4.2]{ek}). We may define the (local) martingale problem as follows. For $\psi \in \mc{S}_+$ and $t \ge 0$ let  	
	\begin{align}\label{eq:main_mart_problem}
		M^Y_t(\psi)  = e^{-\langle Y_t, \psi\rangle} - e^{-\langle Y_0, \psi\rangle}  - \int_0^t  e^{- \langle Y_{s-}, \psi\rangle } \left( -\langle Y_{s-}, \frac{1}{2} \Delta \psi\rangle + \langle Y_{s-}^{\alpha \beta}, \psi^{\alpha} \rangle\right)  \,d s
	\end{align} 
	where $(Y_t)_{t\ge 0}$ are the coordinate maps on $D$, i.e. $Y_t (\omega) = \omega (t)$ for $\omega \in D$. A probability measure $\pr$ on $(D, \mc{B}(D))$ is said to be a solution of the (local) martingale problem for \eqref{eq:main_mart_problem} if for all $\psi \in \mc{S}_+$ we have that $\{M^Y_t(\psi)\}_t$ is a (local) martingale under $\pr$. We know from \cite[Proposition 4.1]{mytnik02} that a solution to the local martingale problem \eqref{eq:main_mart_problem} exists with stopping times 
	\begin{align}\label{eq:gamma^Y}
		\gamma^Y(k)  := \inf\left\{ s\ge 0 \middle| \int_0^s \lVert Y_{r}\rVert^{\alpha\beta}_{\alpha\beta} \,d r >k \right\}, \quad k \in \N.
	\end{align}
	where $\lVert \cdot \rVert_{\alpha\beta}$ denotes the norm of the space $\bf{L}^{\alpha\beta}(\R)$.  \cite[Proposition 4.1]{mytnik02} also guarantees that, when $Y_0 \in M_F$, for all $t>0$ we have $Y_t \in  M_F$ as well.
	
	Since $Y_0 \in M_F$ is chosen arbitrarily, in light of the above discussion we can rephrase Theorem \ref{thm:main_spde} into the equivalent result concerning the martingale problem \eqref{eq:main_mart_problem}.
	
	\begin{thm}\label{thm:main_mart_problem}
		Under the assumptions of Theorem \ref{thm:main_spde}, any two solutions of \eqref{eq:main_mart_problem} have same one-dimensional distributions. 
	\end{thm}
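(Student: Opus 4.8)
The plan is to follow Mytnik's approximating-duality scheme adapted to the stable setting. The core idea is that exact self-duality fails here (unlike the $\alpha\beta=1$ case, where $\langle Y_t^{\alpha\beta},\psi^\alpha\rangle = \langle Y_t,\psi^\alpha\rangle$ would link to a dual process), so instead one builds a one-parameter family of approximate dual functions and controls the error as the parameter degenerates. Concretely, for each $\lambda>0$ I would introduce a ``mild'' dual equation on the space of nonnegative test functions of the form $\partial_t u = \tfrac12\Delta u - c_\lambda\, u^{\alpha}$ run together with an auxiliary process, or more precisely work with the function-valued process $U_t$ solving an equation whose nonlinearity approximates $u\mapsto u^{\alpha}$ near the relevant scale while remaining globally Lipschitz (so that it is genuinely well posed and the duality identity can be differentiated). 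The approximate duality function would be $F_\lambda(\mu,\nu) = \exp(-\langle\mu,\nu\rangle)$ evaluated along the two evolutions, and one computes $\tfrac{d}{dt}\,\mathbb{E}[F_\lambda(Y_t, U_{T-t})]$ using the martingale problem \eqref{eq:main_mart_problem} for $Y$ on one side and the (deterministic or weakly stochastic) equation for $U$ on the other.

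The key steps, in order: (1) Reduce via a localization/stopping-time argument using $\gamma^Y(k)$ from \eqref{eq:gamma^Y} so that all the integrands $\langle Y_{s-}^{\alpha\beta},\psi^\alpha\rangle$ appearing are integrable and the local martingales are true martingales up to $\gamma^Y(k)$; then it suffices to identify $\mathbb{E}[e^{-\langle Y_{t\wedge\gamma^Y(k)},\psi\rangle}]$ in terms of the initial data. (2) Construct the approximating dual: define, for a regularizing parameter $\lambda$ (and possibly a truncation level), a process $U^\lambda$ with $U^\lambda_0 = \psi$ whose generator matches the $Y$-generator in \eqref{eq:main_mart_problem} up to an explicitly bounded error term $R^\lambda$, the point being that $v\mapsto v^{\alpha}$ and $v\mapsto v^{\alpha\beta}$ must be reconciled on the range of values taken by the solutions. (3) Apply the duality computation: show $\mathbb{E}[e^{-\langle Y_t, U^\lambda_0\rangle}] = \mathbb{E}[e^{-\langle Y_0, U^\lambda_t\rangle}] + \int_0^t (\text{error involving } R^\lambda)\,ds$, where the error is the same for any two solutions $Y,\tilde Y$ with $Y_0=\tilde Y_0$ because $U^\lambda$ does not depend on which solution we chose. (4) Take $\lambda\downarrow 0$: the leading terms converge to $\mathbb{E}[e^{-\langle Y_t,\psi\rangle}]$ and to a quantity depending only on $Y_0$ and $\psi$, while the error term vanishes. (5) Conclude that $\mathbb{E}[e^{-\langle Y_t,\psi\rangle}] = \mathbb{E}[e^{-\langle \tilde Y_t,\psi\rangle}]$ for all $\psi\in\mathcal{S}_+$ and all $t$, hence equal one-dimensional distributions by the fact that Laplace functionals on $\mathcal{S}_+$ separate laws on $M_F$, which upgrades to equal finite-dimensional distributions by the standard martingale-problem argument \cite[Theorem 4.4.2]{ek}.

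The main obstacle will be step (2)–(4): obtaining quantitative control on the approximation error $R^\lambda$ and showing it vanishes in the limit. In the Gaussian ($\alpha=2$, $\beta\in(\tfrac12,1)$) case Mytnik's error estimates rest on moment bounds for the solution and on the Hölder-type inequality $|a^{\alpha\beta}-a\cdot a^{\alpha\beta-1}|$ smallness; here with $\alpha\in(1,2)$ and $\beta\in(\tfrac1\alpha,1)$ one has $\alpha\beta\in(1,\alpha)\subset(1,2)$, so the relevant nonlinearity $v^{\alpha}$ is superlinear and only $\alpha$-th (not second) moments of the stable noise are available — one must instead use the $p$-th moment bounds for $p<\alpha$ together with the no-negative-jumps structure (which gives the clean Laplace identity \eqref{eq:stable_mart_measure_def} and hence sign control on $e^{-\langle Y,\psi\rangle}\in[0,1]$). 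The condition $\beta>\tfrac1\alpha$ should be exactly what makes $\langle Y_{s-}^{\alpha\beta},\psi^\alpha\rangle$ controllable by the natural norms, so I would expect the estimates to close precisely in that regime. A secondary technical point is verifying that the dual equation for $U^\lambda$ is well posed with the required regularity (so that Itô's formula / the chain rule can be applied to $e^{-\langle Y_t,U^\lambda_{T-t}\rangle}$); choosing the approximation so that $U^\lambda$ stays in a nice function space (e.g. bounded, smooth, rapidly decreasing) while $\lambda>0$ should handle this, with all the analytic difficulty pushed into the uniform-in-$\lambda$ error bound.
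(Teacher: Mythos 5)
Your high-level skeleton (approximating duality, moment bounds of order $p<\alpha$, separation of laws by Laplace functionals on $\mathcal{S}_+$) matches the paper's strategy, but there is a genuine gap in the central construction: the approximate dual cannot be a deterministic (or ``weakly stochastic'') function-valued flow obtained by Lipschitz-regularizing the nonlinearity $u\mapsto u^{\alpha}$. Run the duality computation with any deterministic $u_t$: the generator of $Y$ in \eqref{eq:main_mart_problem} contributes $-\langle Y,\tfrac12\Delta u\rangle+\langle Y^{\alpha\beta},u^{\alpha}\rangle$ while the evolution of $u$ contributes $\langle Y,\partial_t u\rangle$, which is necessarily \emph{linear} in $Y$. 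Since $\alpha\beta\neq 1$, the term $\langle Y^{\alpha\beta},u^{\alpha}\rangle$ is nonlinear in $Y$ and no choice of deterministic drift $\phi_\lambda(u)$ can cancel it; matching would require $\phi_\lambda(u)\approx Y^{\alpha\beta-1}u^{\alpha}$, which depends on the unknown solution. This is why the error term in your step (3) is \emph{not} ``the same for any two solutions'': it involves $Y$ itself, and the argument only closes because the error is shown to vanish uniformly via the moment estimate. The paper's resolution is structural: the dual $Z^{(n)}$ is a measure-valued \emph{jump} process --- between exponential jump times it follows the PDE $v_t=P_t\mu-\int_0^tP_{t-s}(b_nv_s^{\alpha})\,ds$, and at jump times it adds point masses $S^n_i\delta_{U^n_i}$ with sizes drawn from the $\alpha\beta$-stable measure truncated below at $1/n$. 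The Lévy--Khintchine-type identity $y^{\alpha\beta}=\eta g(1/n,y)+\eta\int_{1/n}^{\infty}(e^{-\lambda y}-1)\lambda^{-\alpha\beta-1}\,d\lambda+b_ny$ is what reconciles the two nonlinearities: the linear drift $b_nV^{\alpha}$ plus the compensated jump contribution reproduce $-\langle Y^{\alpha\beta},(Z^{(n)})^{\alpha}\rangle$ exactly, up to the small-jump remainder $\eta\langle g(1/n,Y),(Z^{(n)})^{\alpha}\rangle$, which is then bounded by $Cn^{-(\alpha-\alpha\beta)/2}\ln n$ using the moment estimate with exponent $\tfrac{\alpha(\beta+1)}{2}<\alpha$ and the time change $\gamma^n$. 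The truncation level $1/n$, not a Lipschitz regularization, is the approximation parameter.

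A secondary divergence: you localize $Y$ via $\gamma^Y(k)$, whereas the paper instead proves (Propositions \ref{prop:moment_est_Y_2} and \ref{prop:M^Y_mart}) that the moment bound $\sup_x\E(Y_t(x)^q)\le C_1e^{C_2t}$ for $q<\alpha$ makes $M^Y(\psi)$ of class DL, hence a true martingale, so no stopping on the $Y$ side is needed; the localization happens on the dual side via $\gamma^n(k_n)$ with $k_n=\ln n$, and one must check $\pr(\gamma^n(k_n)<t)\to 0$. Your version could in principle be made to work but would complicate the duality identity, since the stopped process $Y_{\cdot\wedge\gamma^Y(k)}$ no longer solves the same martingale problem on the event of early stopping. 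Without the jump-process dual and the stable-measure decomposition of $y^{\alpha\beta}$, steps (2)--(4) of your plan cannot be executed.
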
	
	
	Motivated by \cite{mytnik98} we use an \textit{approximating duality} argument and would like to show the following.
	\begin{thm}\label{thm:required_approx_duality_one_sided}
		Let $ \psi \in \mc{S}_+$ and $Y$ be a solution to \eqref{eq:main_mart_problem} where $Y_0$ is as in Theorem \ref{thm:main_spde}. Then there exists a sequence of processes $\{Z^{(n)}\}_{n\ge 1}$, independent of $Y$, with $Z^{(n)}_0 = \psi$ for all $n \geq 1$ such that for each $t \geq 0$
		\begin{align}\label{eq:required_approx_duality_one_sided}
			\E \exp(-\langle Y_t, \psi\rangle) = \lim_{n\to \infty} \E\exp(-\langle Y_0, Z^{(n)}_t\rangle).
		\end{align}		
	\end{thm}	
	By the virtue of \cite[Theorem 1.3]{mytnik96} this will imply Theorem \ref{thm:main_mart_problem}.
	
	Now we shall discuss a formal strategy of how one would prove Theorem \ref{thm:required_approx_duality_one_sided}. Suppose $Y$ solves \eqref{eq:main_mart_problem} and $Z$ solves SPDE given below
	\begin{align}\label{eq:dual_spde}
		\frac{\partial Z_t(x)}{\partial t}  = \frac{1}{2} \Delta_x Z_t(x) + Z_{t-}(x)^{\frac{1}{\beta}} \dot{L}^{\alpha\beta}, \qquad Z_0 = \psi
	\end{align}
	or equivalently the local martingale problem
	\begin{align}\label{eq:dual_mart_problem}
		M^Z_t(\varphi)  = e^{-\langle  \varphi, Z_t\rangle} - e^{-\langle \varphi, Z_0\rangle}  - \int_0^{t} e^{- \langle \varphi, Z_{s-}\rangle } \left( -\langle  \frac{1}{2} \Delta \varphi, Z_{s-}\rangle + \langle \varphi^{\alpha\beta} ,  Z_{s-}^{\alpha},\rangle\right) \,d s, \quad \varphi \in \mc{S}_+
	\end{align} 
	is an $\mc{F}^{Z}_{t}$-local martingale. Then one could try to establish the following exponential duality relation
	\begin{equation}\label{eq:required_duality_relation}
		\E\exp(-\langle Y_t, \psi\rangle) = \E \exp(-\langle \varphi, Z_t\rangle).
	\end{equation}
	Although this duality relationship holds, the required integrability conditions will fail to hold (see \cite[Theorem 4.4.11]{ek}). Thus one uses the approximate duality technique. In this approach we construct an approximating sequence $Z^{(n)}$ to $Z$ using the framework of \cite{mytnik98} to prove the Theorem \ref{thm:required_approx_duality_one_sided}.
	
	However there are two key difficulties to overcome. First, we require suitable bounds on the moments of the solutions to \eqref{eq:main_mart_problem} and the second difficulty is the fact that $M^Y(\psi)$, as defined above, are only local martingales. We prove the moment estimate result in Proposition \ref{prop:moment_est_Y_2} for the range of $\alpha$ and $\beta$ stated in Theorem \ref{thm:main_spde}. From this we can show that $M^Y(\psi)$ is indeed a martingale.	

	\begin{rem}
		Note that the condition $\alpha\beta >1$ is crucial for our argument and the technique of approximate duality. Consequently the case when $\alpha \beta < 1$ is not covered by this method. 
	\end{rem}
	
	\textbf{Layout of the paper.} We briefly sketch the construction of $Z^{(n)}$ in the next section. The moment estimates and the proof that $M^Y(\psi)$ is a martingale can be found in Section \ref{sec:moment_mart}. We have split the proof of Theorem \ref{thm:required_approx_duality_one_sided} into Propositions \ref{prop:lem3.1_m98}, \ref{prop:(3.19)_m98} and \ref{prop:lem3.3_m98} and have stated them in Section \ref{sec:proof_thm}. In this section we also finish the proof of Theorem \ref{thm:required_approx_duality_one_sided} assuming these three results. Their proofs can be found in Section \ref{sec:proof_props}. Finally, Appendix \ref{app:gronwall}, \ref{app:pde} and \ref{app:time_dep_mart} contain some auxiliary results that are used in various places of this paper. 
	
	We use the notations $c$, $c_1$, $C$, $C_1$ etc. to denote constants whose value may change from one line to the next. They will usually depend on the time horizon $T$ and the initial condition $Y_0$. Wherever necessary we will denote their dependence on the relevant parameters.
	
	\textbf{Acknowledgments:} This paper is part of my Ph.D. thesis. I would like to thank my advisor Siva Athreya for proposing this problem to me and for numerous helpful and motivating conversations. I want to specially thank Leonid Mytnik for several useful discussions and pointers on techniques used in the proof including the moment estimate derived in Proposition \ref{prop:moment_est_Y_2}. I am grateful to Yogeshwaran D, Edwin Perkins and B V Rao for their useful suggestions and comments on an earlier draft of this paper. Lastly, I wish to thank the two anonymous reviewers for carefully reading the draft and pointing out various mistakes.
	
	\section{Preliminaries}\label{sec:prelim}
	
	This section contains notations that are used throughout the paper, some useful results regarding the mild forms of \eqref{eq:main_spde_weak_form} and the construction of the approximating sequence $Z^{(n)}$.
	
	Let $p_t(x) = \frac{1}{\sqrt{2\pi t}} \exp\left( - \frac{x^2}{2 t}\right)$ for all $t > 0, x \in \R$. For any function $f:\R\to \R$ and a measure $\mu \in M_F$, we will denote 
	\begin{align*}
		P_t f(x) = \int_{\R} p_t(x-y)f(y) \, dy\text{ and } P_t \mu (x)  = \int_{\R} p_t(x-y)\, \mu( dy).
	\end{align*}	
	As in \cite{yz17}, define a measure on $\R$, 
	\begin{align}\label{eq:m_0}
		m_0(d z) = \frac{\alpha(\alpha-1)}{\Gamma(2-\alpha)}z^{-1-\alpha} \ind\{z>0\}\,d z.
	\end{align}
	We first show that the solution $Y_t$ in \eqref{eq:main_spde_weak_form} of Definition \ref{def:weak_form} can be written in the following equivalent mild forms.  
	
	\begin{prop}
		Let $Y$ be a solution as in Definition \ref{def:weak_form} and $Y_0$ be as in Theorem \ref{thm:main_spde}. Then
		\begin{itemize}
			\item[(a)] For $t\ge 0, x\in \R$,
			\begin{align}\label{eq:main_spde_mild}
				Y_t(x) =P_t Y_0(x) + \int_0^t \int_{\R} p_{t-s}(x-y)Y_s(y)^{\beta} \, L^{\alpha}(dy, \, ds).
			\end{align}
			
			\item[(b)] There exists a Poisson random measure (PRM) $N$ on $(0,\infty)^2\x \R$ with intensity $ds\, m_0(dz) dx$ such that 
			\begin{align}\label{eq:main_spde_mild_PRM_1}
				Y_t(x) =P_t Y_0(x)  + \int_{0}^t \int_{0}^{\infty}\int_{\R} z \,p_{t-s}(x-y)Y_s(y)^{\beta}  \, \tilde{N}(dy, \, dz, \, ds),
			\end{align}
			where $\tilde{N} (dy, \, dz, \, ds) = N (dy, \, dz, \, ds) - dy \, m_0(dz)\, ds$.
			
			\item[(c)] On an enlarged probability space there exists a PRM $N_0$ on $(0,\infty)^2\x \R\x (0,\infty)$ with intensity $ds\, m_0(dz)\, dy\, dv$ such that, for all $t\ge 0$ and a.e. $x \in \R$, 
			\begin{align}\label{eq:main_spde_mild_PRM_2}
				Y_t(x) =P_t Y_0(x) + \int_{0}^t \int_{0}^{\infty}\int_{\R}\int_{0}^{Y_s(y)^{\alpha\beta}} z\, p_{t-s}(x-y)\tilde{N}_0(\, d v, \, dy, \, d z, \, d s),
			\end{align}
			where $\tilde{N}_0(\, d v, \, dy, \, d z, \, d s) = N_0 (\, d v, \, dy, \, d z, \, d s)-  \, dv \, dy \, m_0(dz) \, ds $.
		\end{itemize}		
	\end{prop}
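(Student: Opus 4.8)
The plan is to establish the three mild forms by standard stochastic-analysis manipulations, going from the weak (test-function) formulation to the mild (convolution) formulation and then successively representing the stable martingale measure $L^\alpha$ first via a Poisson random measure of its jumps and then via a ``Lamperti-type'' time/space change that absorbs the factor $Y_s(y)^{\beta}$ into the domain of integration. For part (a), I would start from the weak form \eqref{eq:main_spde_weak_form}, apply it with the time-dependent test function $\psi_s(y) = p_{t-s}(x-y)$ (more precisely, with a smooth rapidly-decreasing approximation to $p_{t-s}(x-\cdot)$, since $p_{t-s}(x-\cdot)\notin\mc{S}$ when $s=t$, and then pass to the limit), and use the stochastic Fubini theorem together with the fact that $\partial_s P_{t-s}\varphi = -\tfrac12\Delta P_{t-s}\varphi$ to see that the drift terms telescope; what remains is exactly the stochastic convolution term in \eqref{eq:main_spde_mild}. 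The regularization and the justification of stochastic Fubini for the $\alpha$-stable (only $L^q$ for $q<\alpha$ integrable) noise is where some care is needed, but this is by now classical (cf.\ the arguments in \cite{mytnik02}, \cite{yz17}); one uses the $L^q$-estimates for stochastic integrals against $L^\alpha$ and the decay of the heat kernel to pass to the limit.

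For part (b), I would use the Lévy--Itô description of the $\alpha$-stable martingale measure without negative jumps: there is a PRM $N$ on $(0,\infty)\times(0,\infty)\times\R$ with intensity $ds\,m_0(dz)\,dx$ (the measure $m_0$ in \eqref{eq:m_0} being precisely the Lévy measure of the one-sided $\alpha$-stable law, normalized so that \eqref{eq:stable_mart_measure_def} holds) such that for predictable integrands $H$,
\begin{align*}
\int_0^t\int_\R H(s,y)\,L^\alpha(ds\,dy) = \int_0^t\int_0^\infty\int_\R z\,H(s,y)\,\tilde N(ds\,dz\,dy).
\end{align*}
Substituting $H(s,y) = p_{t-s}(x-y)Y_s(y)^\beta$ into (a) gives (b) immediately. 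The only thing to verify is the compensation/integrability: since $\alpha\beta>1$, one checks $\int_0^t\int_\R (p_{t-s}(x-y)Y_s(y)^\beta)^{\alpha}\,ds\,dy<\infty$ using the moment estimate for $Y$ (Proposition \ref{prop:moment_est_Y_2}), which also legitimizes writing the integral against the compensated measure $\tilde N$.

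For part (c), the idea is to ``thin'' the PRM: on an enlarged space attach to each atom an independent uniform mark, i.e.\ introduce a PRM $N_0$ on $(0,\infty)\times(0,\infty)\times\R\times(0,\infty)$ with intensity $ds\,m_0(dz)\,dy\,dv$ whose projection onto the first three coordinates recovers $N$. Then for a predictable process the identity
\begin{align*}
\int_0^t\int_0^\infty\int_\R z\,p_{t-s}(x-y)Y_s(y)^\beta\,\tilde N(ds\,dz\,dy) = \int_0^t\int_0^\infty\int_\R\int_0^{Y_s(y)^{\alpha\beta}} z\,p_{t-s}(x-y)\,\tilde N_0(ds\,dz\,dy\,dv)
\end{align*}
holds because the $v$-integral over $(0,Y_s(y)^{\alpha\beta})$ of the constant integrand just multiplies the intensity by the Lebesgue length $Y_s(y)^{\alpha\beta}$ — but one must recognize that this length appears with exponent $\alpha\beta$ rather than $\beta$, which works out precisely because the stable integral against $z\,\tilde N$ scales like the $\alpha$-th power of the integrand, matching $(Y_s(y)^\beta)^\alpha = Y_s(y)^{\alpha\beta}$; at the level of predictable characteristics the two sides have the same compensator. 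This representation is the point of the construction since in the approximating-duality argument it allows the nonlinearity to be moved entirely into the region of integration. I expect the main obstacle, modest as it is, to be in part (a): the heat kernel $p_{t-s}(x-\cdot)$ is not Schwartz (it blows up as $s\uparrow t$), so the passage from the weak form — which only holds for $\psi\in\mc{S}$ — to the mild form requires an approximation argument together with a dominated-convergence/stochastic-Fubini step valid for the $\alpha$-stable noise, using only $L^q$-boundedness for $q<\alpha$ rather than $L^2$; after that, parts (b) and (c) are essentially bookkeeping on Poisson random measures.
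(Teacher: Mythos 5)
Your overall route is the same as the paper's (which simply outsources parts (a) and (b) to the proof of Theorem 1.1(a) in Mytnik--Perkins and part (c) to Proposition 2.1 of Yang--Zhou): weak form with heat-kernel test functions plus stochastic Fubini for (a), the L\'evy--It\^o representation of $L^\alpha$ via a PRM with intensity $ds\,m_0(dz)\,dy$ for (b), and a marking/scaling of the PRM on an enlarged space for (c). The paper's only genuinely new ingredient is the verification that
\begin{align*}
\int_0^t \int_{\R} (t-s)^{-\alpha/2}\, Y_s(y)^{\alpha\beta}\,dy\,ds \le \Bigl(\sup_{s\le t}\lVert Y_s\rVert_{\alpha\beta}^{\alpha\beta}\Bigr)\int_0^t (t-s)^{-\alpha/2}\,ds < \infty \quad \text{a.s.,}
\end{align*}
which it deduces \emph{pathwise} from $\int_0^t\lVert Y_s\rVert_{\alpha\beta}^{\alpha\beta}\,ds<\infty$ and the cadlag property of $s\mapsto\lVert Y_s\rVert_{\alpha\beta}$.

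There is one genuine logical problem in your write-up: in part (b) you propose to justify the integrability $\int_0^t\int_\R (p_{t-s}(x-y)Y_s(y)^\beta)^\alpha\,ds\,dy<\infty$ by invoking the moment estimate of Proposition \ref{prop:moment_est_Y_2}. That is circular. The proof of Proposition \ref{prop:moment_est_Y_2} starts from the mild form \eqref{eq:main_spde_mild_PRM_2}, i.e.\ from part (c) of the very proposition you are proving, which in turn rests on part (b). This is exactly the trap the paper is avoiding: the whole point of its (very short) argument is that only an \emph{almost sure} integrability statement is needed to make the stochastic integrals well-defined, and this is available pathwise from the definition of a solution, without any moment bounds. (This is also the content of the Remark following the proposition, explaining why the paper does not need Assumption 1.4 of Yang--Zhou.) Replace the appeal to the moment estimate by the pathwise bound displayed above and your argument is fine. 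A secondary, smaller inaccuracy: in part (c) the projection of $N_0$ onto its first three coordinates does \emph{not} recover $N$ (the $v$-marginal has infinite mass); the correct construction rescales the jump size $z\mapsto zY_s(y)^{\beta}$ so that the restriction of $N_0$ to $\{v<Y_s(y)^{\alpha\beta}\}$ has the law of the image of $N$, and then adjoins independent Poisson points on the complement --- matching compensators alone, as you note at the end, would only give equality in law, not the a.s.\ identity asserted in \eqref{eq:main_spde_mild_PRM_2}.
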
 
	
	\begin{proof}
		(a) This can be shown by an argument similar to the one in the proof of Theorem 1.1(a) in \cite{myt-per06}. The only difference here is to show that for each $t>0$,
		\begin{align}
			\int_0^t \int_{\R} (t-s)^{-\alpha/2} Y_s(y)^{\alpha\beta} \, dy \,ds \le\left( \sup_{s\le t} \lVert Y_s\rVert_{\alpha\beta}^{\alpha\beta}\right) \int_0^t (t-s)^{-\alpha/2}\,d s<\infty \text{ a.s.}
		\end{align}
		This follows from the facts that $\int_0^t \lVert Y_s\rVert^{\alpha\beta}_{\alpha\beta} \,d s<\infty$ and $s\mapsto \lVert Y_s\rVert_{\alpha\beta} = \langle Y_s^{\alpha\beta}, 1\rangle^{\frac{1}{\alpha\beta}}$ is a cadlag map. 
		
		The claim in part (b) follows from the above and \cite[Theorem 1.1(a)]{myt-per06}. Using a change of variable type transformation as indicated in the proof of \cite[Proposition 2.1]{yz17} we get part (c). 
	\end{proof}
	
	\begin{rem}
		To show that the stochastic integral in \eqref{eq:main_spde_mild_PRM_2} is well-defined Yang and Zhou used the additional condition (see \cite[Assumption 1.4]{yz17}) that there is a $q>\frac{3\alpha\beta}{3-\alpha}$ such that $\int_0^t  \int_{\R}  Y_s(x)^q \, dx \, d s< \infty$ for all $t>0$ a.s.. We here observe that our proof of \eqref{eq:main_spde_mild_PRM_2} above does not require this assumption.
	\end{rem}
	
	As mentioned before we need to construct an approximating sequence $\{Z^{(n)}\}_n$ to $Z$ described in \eqref{eq:dual_spde}. We shall use the construction given in \cite[\textsection 3]{mytnik02}. For completeness we only present the sketch below.
	
	Define $Z^{(n)}_0 (\,d x) = \psi(x)\,d x$ and let $b_n = \frac{\alpha \beta}{\Gamma(2-\alpha\beta)}n^{\alpha \beta -1}$. We know from \cite[Proposition A2]{fleisch86} that given $\mu \in M_F$, there is a unique non-negative solution to the partial differential equation (PDE) 
	\begin{align}\label{eq:pde_Z}
		v_t = P_t \mu - \int_0^t P_{t-s}(b_n v_s^{\alpha}) \,d s
	\end{align}
	where $(P_t \mu) (x)= \int_{\R} p_t(x-y) \mu(\,d y)$. Let us call this solution $V^{n}_{\cdot}(\mu)$. See Appendix \ref{app:pde} for some properties of the above PDE under nicer initial conditions.
	
	The idea behind this $Z^{(n)}$ is as follows. $Z^{(n)}$ evolves according to the PDE \eqref{eq:pde_Z}, jumps after a random time given by dirac measures at specified mass and location (denoted in the following by $\gamma^{n}(T^{n}_k)$, $S^n_i$ and $U^n_i$ respectively, see \eqref{eq:Zn_jump_time_1} for precise definition). More precisely, let $\tilde{T}^{Z,n}_i := \tilde{T}^n_i \sim \Exp(n^{\alpha\beta}\frac{(\alpha\beta -1)}{\Gamma(2-\alpha\beta)})$,  $i \in \N$, be i.i.d. random variables and $ T^{n}_i := \sum_{k=1}^i \tilde{T}^{n}_k$. The jump heights are given by i.i.d. $[\frac{1}{n}, \infty)$-valued random variables $ \{S^{n}_i \mid i \in \N\}$ defined by
	\begin{align}
		\pr (S^{n}_i \ge b) = \frac{\int_{b\vee (1/n)}^{\infty} \lambda^{-\alpha\beta -1}\,d \lambda}{\int_{1/n}^{\infty} \lambda^{-\alpha\beta -1}\,d \lambda}, \qquad b \ge 0.	
	\end{align}  
	We observe that $\E[S^n_i] = \frac{\alpha\beta}{n (\alpha\beta -1)}$. Let 
	\begin{align*}
		A^{n}_t := \sum_{k=1}^{\infty} S^{n}_k \ind (T^{n}_k \le t)
	\end{align*} be the process that jumps by height $S^{n}_i$ at time $T^{n}_i$ for all $i \in \N$. By $(\mc{F}^{A^n}_t)_{t\ge 0}$ we will denote the filtration generated by $A^n$. For $0 \le t \le T^{n}_1$ define the time change
	\begin{align}
		\gamma^{n} (t)= \inf \left\{s\geq 0\middle|\int_{0+}^s \lVert V^{n}_r (\mu)\rVert^{\alpha}_{\alpha}\,d r>t \right\}.
	\end{align} 
	We can define the approximating sequence $Z^{(n)}$ on the (random) interval $[0, \gamma^n(T^n_1))$ by 
	\begin{align}
		Z^{(n)}_t = V^{n}_t (Z^{(n)}_0), \quad 0 \leq t < \gamma^{n} (T^{n}_{1}), 
	\end{align}
	where $V^n$ is the solution of the PDE \eqref{eq:pde_Z}. For defining $Z^{(n)}$ at the time $t =  \gamma^{n} (T^{n}_{1})$, we proceed as follows. For each $f \in \bf{L}^{\alpha}(\R)_+$ let $G(f, \cdot)$ be a probability measure on $(\R, \mc{B}(\R))$ such that for all $E\in \mc{B}(\R)$,  
	\begin{align*}
		G(f,E) := \frac{\int_E f(x)^{\alpha}\,d x}{\lVert f \rVert_{\alpha}^{\alpha}}.
	\end{align*}		
	Lastly, let $U^n_1$ be a $\R$-valued random variable defined by the relation 
	\begin{align*}
		\pr(U^{n}_1 \in E \mid \mc{F}^{A^n}_{T^n_1}) = G(Z^{(n)}_{\gamma^n (T^{n}_1)-}, E) , \quad E \in \mc{B}(\R).
	\end{align*} 	
	Then we can define
	\begin{align}\label{eq:Zn_jump_time_1}
		Z^{(n)}_{\gamma^{n} (T^{n}_{1})} = Z^{(n)}_{\gamma^{n} (T^{n}_1)-} + S^{n}_1\delta_{U^{n}_1}.
	\end{align}
	Thus we have constructed $Z^{(n)}$ on the interval $[0, \gamma^{n} (T^{n}_{1})]$.	 
	
	When $t > \gamma^n(T^n_1)$, $Z^{(n)}$ is defined inductively: for integers $k \ge 1$,
	\begin{align}
		Z^{(n)}_t :=  \begin{cases}
			V^{n}_{t-\gamma^{n}(T^{n}_k)}(Z^{(n)}_{\gamma^{n}(T^{n}_k)}),\qquad t \in [\gamma^{n}(T^{n}_k), \gamma^{n}(T^{n}_{k+1})),
			\\
			Z^{(n)}_{\gamma^{n} (T^{n}_k)-} + S^{n}_{k+1}\delta_{U^{n}_{k+1}}, \qquad t = \gamma^{n} (T^{n}_{k+1}),
		\end{cases} 
	\end{align}	
	where
	\begin{align}
		\gamma^{n}(t) = \inf \left\{s \ge 0 \middle| T^{n}_k + \int_{0+}^{s-\gamma^{n}(T^{n}_k)} \lVert V_r^{n} (Z^{(n)}_{T^{n}_k}) \rVert_{\alpha}^{\alpha} \,d r >t \right\}, \qquad T^{n}_k \le t< T^{n}_{k+1}, 
	\end{align}  and 
	\begin{align}
		\pr(U^{n}_{k+1} \in E \mid \mc{F}^{A^n}_{T^n_{k+1}}) := G(Z^{(n)}_{\gamma^n (T^{n}_{k+1})-}, E) , \qquad E \in \mc{B}(\R).
	\end{align}
	
	This completes the construction of $Z^{(n)}$. It is known that $Z^{(n)}$ solves a local martingale problem as described by the following lemma. As usual, $\mc{F}^{Z^{(n)}}$ denotes the filtration generated by $Z^{(n)}$.
	\begin{lem} \label{lem:lem3.7_myt02}
		Let  $ \eta :=  \frac{\alpha \beta(\alpha \beta -1)}{\Gamma (2-\alpha \beta)}$ and 
		\begin{align}\label{eq:incomplete_gamma}
			g(r,y) : = \int_{0+}^r (e^{-\lambda y} -1 +\lambda y)\lambda^{-\alpha\beta-1} \,d \lambda.
		\end{align} For all $\varphi \in \mc{S}_+$ and $n\ge 1$	
		\begin{align}\label{eq:mart_prob_Zn}
			&M^{Z,n}_{t}(\varphi)  = e^{-\langle \varphi, Z^{(n)}_{t} \rangle} - e^{- \langle \varphi,  Z^{(n)}_0 \rangle } 
			\\
			&- \int_0^{t}  e^{-\langle \varphi, Z^{(n)}_{s-}\rangle} \left(- \langle \frac{1}{2}\Delta \varphi, Z^{(n)}_s  \rangle  \nonumber + \langle \varphi(\cdot)^{\alpha\beta} -  \eta g \left(1/n, \varphi(\cdot)\right), (Z^{(n)}_{s-}(\cdot))^{\alpha}  \rangle \right) \,d s
		\end{align}
		is an $\mc{F}^{Z^{(n)}}$-local martingale with stopping times 
		\begin{align}\label{eq:gamma^Z,n}
			\gamma^{Z,n}(k) : = \gamma^n(k) = \inf\left\{ s\ge 0 \middle| \int_0^s \lVert Z^{(n)}_{r}\rVert^{\alpha}_{\alpha} \,d r >k \right\}, \quad k \in \N.
		\end{align}
	\end{lem}
	
	\begin{proof}
		See \cite[Lemma 3.7]{mytnik02}.
	\end{proof}
	
	We conclude the section with a result describing the behaviour of the compensator of $\mc{N}^{Z^{(n)}}= \mc{N}^n$ which is the counting measure tracking the jumps of $Z^{(n)}$. 
	
	\begin{lem}\label{lem:lem3.2_m98}
		The compensator of $\mc{N}^n$ is, for $B_1 \in \mc{B}([0,\infty)), B_2 \in \mc{B}(\R)$
		\begin{align}
			\hat{\mc{N}}^n(t, B_1\x B_2) = \eta \int_0^{t\wedge T^*_n}\,d r \int_{B_1}\,d \lambda \int_{B_2} \,d x  \frac{\left(Z^{(n)}_{r-}(x)\right)^{\alpha}}{\lVert Z^{(n)}_{r-} \rVert_{\alpha}^{\alpha}} \ind(\lambda >1/n) \lambda^{-\alpha\beta -1}
		\end{align}
		where \begin{align}\label{eq:def_T^Z,n,*}
			T^*_n = \inf\{t \ge 0 \mid \gamma^n(t)= \infty\} = \inf\{t\ge 0 \mid  A^{n}_t - b_n t = 0\}.
		\end{align}
	\end{lem}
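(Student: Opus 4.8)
The plan is to read the compensator off the explicit construction of $Z^{(n)}$, which presents it as a piecewise-deterministic Markov process driven by a Poissonian jump clock, and then to confirm the formula by checking directly that $N^n(\cdot\,,B_1\times B_2)-\hat N^n(\cdot\,,B_1\times B_2)$ is a local martingale. The first step is to isolate the jump \emph{rate}. By construction the $k$-th jump is attached to the clock time $T^n_k=\sum_{i=1}^k\tilde T^n_i$, with the $\tilde T^n_i$ iid exponentials of parameter $\rho_n:=n^{\alpha\beta}(\alpha\beta-1)/\Gamma(2-\alpha\beta)$; hence, in the time scale in which the jumps are indexed by the $T^n_k$ (that is, after composing with $\gamma^n$), the jump count $r\mapsto\#\{k:T^n_k\le r\}$ is a homogeneous Poisson process of rate $\rho_n$, run until the clock time $T^*_n$ past which $\gamma^n\equiv\infty$ and no further point contributes; in particular its compensator is $\rho_n(t\wedge T^*_n)$. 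I would also record the arithmetic $\rho_n=\eta\int_{1/n}^{\infty}\lambda^{-\alpha\beta-1}\,d\lambda$ and $\rho_n\cdot\alpha\beta n^{-\alpha\beta}=\alpha\beta(\alpha\beta-1)/\Gamma(2-\alpha\beta)=\eta$, which is what makes the constants in the statement come out right.

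Next I would insert the marks. The jump heights $\{S^n_k\}$ are iid and independent of the jump times, with $\pr(S^n_k\ge b)=(nb)^{-\alpha\beta}$ for $b\ge 1/n$, i.e.\ with Lebesgue density $\alpha\beta n^{-\alpha\beta}\lambda^{-\alpha\beta-1}\ind(\lambda>1/n)$; and the location $U^n_k$ has, conditionally on the pre-jump history, the law $G(Z^{(n)}_{\gamma^n(T^n_k)-},\cdot)$, that is, the density $(Z^{(n)}_{\cdot-}(x))^{\alpha}/\lVert Z^{(n)}_{\cdot-}\rVert_{\alpha}^{\alpha}$ against Lebesgue measure. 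Because the height is drawn independently of the past while the location is given by a predictable kernel, the compensator of the marked counting measure $N^n$ factorises as the product of the rate $\rho_n\,dr$ (stopped at $T^*_n$), the height law, and the predictable location kernel; multiplying these together and using the identities above yields precisely the stated expression for $\hat N^n(t,B_1\times B_2)$, with $Z^{(n)}_{r-}$ there understood as the state of the process immediately before the jump carried at clock time $r$.

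To make this rigorous without leaning on an informal ``rate'' heuristic, I would verify directly that $N^n(t,B_1\times B_2)-\hat N^n(t,B_1\times B_2)$ is a local martingale for the relevant filtration: on each inter-jump interval the increment of $\hat N^n$ is deterministic given the pre-jump state, and at $T^n_k$ the memorylessness of the $\tilde T^n_i$ together with the strong Markov property of $Z^{(n)}$ shows the jump has exactly the conditional law used above; an induction over $k$, after a routine localisation (by stopping the clock at integer levels, equivalently by the real times $\gamma^{Z,n}(k)$ of \eqref{eq:gamma^Z,n}), then closes the argument. The identity $T^*_n=\inf\{t\ge 0:A^n_t-b_nt=0\}$ of \eqref{eq:def_T^Z,n,*} I would obtain as a separate elementary step by unwinding the definitions of $\gamma^n$, $V^n$ and $A^n$ and using $\E[A^n_t]=\rho_n\,\E[S^n_1]\,t=b_nt$; this characterisation is also what exhibits $T^*_n$ as a stopping time in the clock scale.

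I expect the main obstacle to be the bookkeeping of two time scales. The process $Z^{(n)}$, its filtration and the location kernel $G(Z^{(n)}_{\cdot-},\cdot)$ live in the scale in which $Z^{(n)}_t=V^n_t(\cdot)$ between jumps, whereas the Poissonian rate $\rho_n$ and the independence of the $S^n_k$ are statements in the jump-index scale; since $\gamma^n$ is itself built from $Z^{(n)}$, one must check that every inter-jump exponential clock is independent of the pre-jump $\sigma$-field, which is exactly where the order in the construction (first evolve by the PDE \eqref{eq:pde_Z}, then sample a fresh exponential) is used, before ``rate $\rho_n$'' is even meaningful; one must likewise keep careful track of which time $t$ the compensator is indexed by and interpret $Z^{(n)}_{r-}$ in the formula accordingly. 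Once those points are pinned down, the remaining computations are routine.
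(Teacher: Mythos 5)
The paper does not actually prove this lemma: its ``proof'' is the single line ``See [Lemma 3.5]\cite{mytnik02}.'' Your proposal therefore cannot match the paper's argument; what it does instead is reconstruct, essentially correctly, the argument behind the cited result. The decomposition you use --- a Poisson clock of rate $\rho_n=n^{\alpha\beta}(\alpha\beta-1)/\Gamma(2-\alpha\beta)$ in the $T^n_k$-scale, the i.i.d.\ height law with density $\alpha\beta n^{-\alpha\beta}\lambda^{-\alpha\beta-1}\ind(\lambda>1/n)$, and the predictable location kernel $G(Z^{(n)}_{\gamma^n(\cdot)-},\cdot)$ --- is exactly the structure of the construction in Section 2, and your constant bookkeeping ($\rho_n\cdot\alpha\beta n^{-\alpha\beta}=\eta$) is right. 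You also correctly flag the one genuine notational trap, namely that the $Z^{(n)}_{r-}$ appearing in the compensator must be read as the state just before the jump attached to \emph{clock} time $r$ (i.e.\ $Z^{(n)}_{\gamma^n(r)-}$), which is how the formula is in fact used in the proof of Proposition \ref{prop:lem3.3_m98}. Stopping the compensator at $T^*_n$ because jumps with $T^n_k\ge T^*_n$ never occur in real time is also the right mechanism.

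Two small caveats on the $T^*_n$ identity \eqref{eq:def_T^Z,n,*}. First, the pathwise source of the linear term $b_nt$ is not $\E[A^n_t]=\rho_n\E[S^n_1]t$; it is the killing term $-\tfrac{b_n}{2}v^{\alpha}$ in the PDE \eqref{eq:pde_Z}, which after the time change $dr=\tfrac12\lVert Z^{(n)}_s\rVert_{\alpha}^{\alpha}\,ds$ makes the total mass in clock time decrease at constant rate $b_n$ between jumps, so that $\langle Z^{(n)}_{\gamma^n(t)},1\rangle=\langle Z^{(n)}_0,1\rangle+A^n_t-b_nt$. Your expectation computation is a correct consistency check (it is why $A^n_t-b_nt$ is a martingale), but it is not the step that identifies $T^*_n$. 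Second, once you unwind the definitions you will find that the hitting condition should involve $\langle Z^{(n)}_0,1\rangle+A^n_t-b_nt=0$ rather than $A^n_t-b_nt=0$ (as written, the latter is satisfied at $t=0$); this appears to be a slip in the paper's transcription of Mytnik's lemma rather than a defect of your argument. With those points adjusted, your plan is a sound self-contained route to the lemma.
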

	
	\begin{proof}
		See \cite[Lemma 3.5]{mytnik02}.
	\end{proof}	
	
	\section{Moment Estimate and Martingale Problem} \label{sec:moment_mart}
	
	In this section we will establish the key moment estimate for solutions $Y$ of \eqref{eq:main_spde_weak_form} and also show that $M^Y(\psi)$ defined in \eqref{eq:main_mart_problem} is a martingale for all $\psi \in \mc{S}_+$. The following is an alternative proof of the estimate presented in \cite[Lemma 2.4]{yz17}. Recall from the statement of Theorem \ref{thm:main_spde} that $Y_0 \in M_F$, the collection of all finite non-negative measures on $\R$. 

	\begin{prop}\label{prop:moment_est_Y_2}
		 Let $1<\alpha<2$ and $\frac{1}{\alpha}<\beta <1$. If $1\le q<\alpha$, then for a.e. $t \in [0,T]$ we have
		\begin{align}\label{eq:moment_est_Y_2.0}
			\sup_{x\in \R}\E(Y_t(x)^q) \le C \, t^{-\frac{q}{2}} + C
		\end{align}
		where $C = C(T,Y_0, \alpha, \beta)>0$ is a constant.
	\end{prop}
	
	\begin{rem}\label{rem:moment_1}
		We note in passing that when $Y_0$ is a bounded function on $\R$, the above estimate can be improved further. In this situation we will have, 
		\begin{align*}
			\sup_{x\in \R}\E(Y_t(x)^q) \le C'_1(T,Y_0) e^{C'_2(T)t}, \quad t \in [0,T],
		\end{align*}	
		where $C'_1$ and $C'_2$ are positive constants.
	\end{rem} 
	
	\begin{proof}[Proof of Proposition \ref{prop:moment_est_Y_2}]
		From \eqref{eq:main_spde_mild_PRM_2} we have, for $t \in [0,T]$,
		\begin{align}\label{eq:moment_Y_1}
			Y_t(x) = P_tY_0(x) + \int_{0}^t \int_{0}^{\infty}\int_{\R}\int_{0}^{Y_r(y)^{\alpha\beta}}z p_{t-r}(x-y)\tilde{N}_0(\, d v, \, dy, \, d z, \, d r).
		\end{align}
		
		Let us define
		\begin{align*}
			\tau_N = \inf\left\{ t\ge 0 \mid \int_0^t \lVert Y_r \rVert_{\alpha\beta}^{\alpha\beta} \,d r \ge N\right\},
		\end{align*}
		when $N \in \N$ and from \cite[Lemma 8.21]{pz07} recall that the quadratic variation of 
		\begin{align*}
			\int_{0}^s \int_{0}^{\infty}\int_{\R}\int_{0}^{Y_r(y)^{\alpha\beta}} z p_{t-r}(x-y) \tilde{N}_0(\, d v, \, dy, \, d z, \, d r)
		\end{align*} equals 
		\begin{align*}
			\int_{0}^s \int_{0}^{\infty}\int_{\R}\int_{0}^{Y_r(y)^{\alpha\beta}} z^2 p_{t-r}(x-y)^2 N_0(\, d v, \, dy, \, d z, \, d r),
		\end{align*}  
		for $s \in [0,T]$.  By the Burkholder-Davis-Gundy inequality (cf. \cite[Theorem IV.48]{protter04}) and the fact that $q<2$ we have
		\begin{align}\label{eq:moment_Y_1.1}
			I := &  \E \left[ \left\lvert \int_{0}^t \int_{0}^{\infty}\int_{\R}\int_{0}^{Y_r(y)^{\alpha\beta}} z p_{t-r}(x-y) \tilde{N}_0(\, d v, \, dy, \, d z, \, d r) \right\rvert^q \ind(t \le \tau_N)\right]
			\nonumber \\
			= & \E \left[ \left\lvert \int_{0}^t \int_{0}^{\infty}\int_{\R}\int_{0}^{Y_r(y)^{\alpha\beta}} z p_{t-r}(x-y)\ind(r \le \tau_N) \tilde{N}_0(\, d v, \, dy, \, d z, \, d r) \right\rvert^q \right]
			\nonumber \\
			\le & c \E \left[ \left\lvert \int_{0}^t \int_{0}^{\infty}\int_{\R}\int_{0}^{Y_r(y)^{\alpha\beta}} z^2 p_{t-r}(x-y)^2 \ind(r \le \tau_N) N_0(\, d v, \, dy, \, d z, \, d r) \right\rvert^{q/2} \right]
			\nonumber \\
			\le &  c \E \left[ \left\lvert \int_{0}^t \int_{0}^{1}\int_{\R}\int_{0}^{Y_r(y)^{\alpha\beta}} z^2 p_{t-r}(x-y)^2 \ind(r \le \tau_N) N_0(\, d v, \, dy, \, d z, \, d r) \right\rvert^{q/2} \right]
			\nonumber \\
			& + c \E \left[ \left\lvert \int_{0}^t \int_{1}^{\infty}\int_{\R}\int_{0}^{Y_r(y)^{\alpha\beta}} z^2 p_{t-r}(x-y)^2 \ind(r \le \tau_N) N_0(\, d v, \, dy, \, d z, \, d r) \right\rvert^{q/2} \right]. 
		\end{align}
		 Let $p \in (\alpha, 2)$ be fixed. Applying Jensen's inequality to the above (noting that $p/q >1$) we have
		\begin{align} \label{eq:moment_Y_1.2}
			I \le & c \E \left[  \left\lvert \int_{0}^t \int_{0}^{1}\int_{\R}\int_{0}^{Y_r(y)^{\alpha\beta}} z^2 p_{t-r}(x-y)^2 \ind(r \le \tau_N) N_0(\, d v, \, dy, \, d z, \, d r) \right\rvert^{p/2} \right]^{q/p}
			\nonumber \\
			&  + c \E \left[ \left\lvert \int_{0}^t \int_{1}^{\infty}\int_{\R}\int_{0}^{Y_r(y)^{\alpha\beta}} z^2 p_{t-r}(x-y)^2 \ind(r \le \tau_N) N_0(\, d v, \, dy, \, d z, \, d r) \right\rvert^{q/2} \right]
			\nonumber \\
			\le & c \E \left[  \int_{0}^t \int_{0}^{1}\int_{\R}\int_{0}^{Y_r(y)^{\alpha\beta}} z^p p_{t-r}(x-y)^p \ind(r \le \tau_N) N_0(\, d v, \, dy, \, d z, \, d r) \right]^{q/p}
			\nonumber \\
			& + c \E \left[ \int_{0}^t \int_{1}^{\infty}\int_{\R}\int_{0}^{Y_r(y)^{\alpha\beta}} z^q p_{t-r}(x-y)^q \ind(r \le \tau_N) N_0(\, d v, \, dy, \, d z, \, d r)  \right],			
		\end{align}
		where the second inequality above is due a fact about random sums (see the proof of \cite[Lemma 8.22]{pz07}). Now we use the definition of the PRM $N_0$, integrate out $z$ and use the inequality $u^{q/p}\le u +1$ for $u \ge 0$. 
		\begin{align}  \label{eq:moment_Y_1.3}
			I \le &  c+ c \E \left[  \int_{0}^t \int_{0}^{1}\int_{\R}\int_{0}^{Y_r(y)^{\alpha\beta}} z^p p_{t-r}(x-y)^p \ind(r \le \tau_N) N_0(\, d v, \, dy, \, d z, \, d r) \right] 
			\nonumber \\
			& + c \E \left[ \int_{0}^t \int_{1}^{\infty}\int_{\R}\int_{0}^{Y_r(y)^{\alpha\beta}} z^q p_{t-r}(x-y)^q \ind(r \le \tau_N) N_0(\, d v, \, dy, \, d z, \, d r)  \right]
			\nonumber \\			
			\le & c+  c  \E \left[  \int_0^{ t}\int_{\R} Y_r(y)^{\alpha\beta} p_{t-r}(x-y)^p \ind(r \le \tau_N) \, d y \, d r  \right]
			\nonumber \\
			& + c  \E \left[ \int_0^{t} \int_{\R}Y_r(y)^{\alpha\beta} p_{t-r}(x-y)^q  \ind(r \le \tau_N) 	\, d y  \, d r  \right]
			\nonumber \\
			\le & c + c \E \int_0^{ t} \int_{\R} (p_{t-r}(x-y)^p + p_{t-r}(x-y)^q) Y_r(y)^{\alpha\beta} \ind(r \le \tau_N) \, d y  \, d r,
		\end{align}
		as $\int_0^1 z^p m_0(dz)<\infty$ and $\int_0^{\infty} z^q m_0(dz) <\infty$.
		From \eqref{eq:moment_Y_1} and \eqref{eq:moment_Y_1.3} we have
		\begin{align}\label{eq:moment_Y_3}
			& \E \left[ Y_t(x)^q \ind(t<\tau_N)\right]
			\nonumber \\
			\le &c( P_tY_0(x))^q + c+  c\E \int_0^{t} \int_{\R} (p_{t-r}(x-y)^p +p_{t-r}(x-y)^q) Y_r(y)^{\alpha\beta} \ind(r<\tau_N)\, dr \, dy
			\nonumber \\
			= & c(P_tY_0(x))^q + c+  c \int_0^{t} \int_{\R} (p_{t-r}(x-y)^p +p_{t-r}(x-y)^q) \E \left[ Y_r(y)^{\alpha\beta} \ind(r<\tau_N)\right]\, dr \, dy.			
		\end{align}
		by applying Fubini's theorem in the last line. Use the definition of $p_t(x)$ to get
		\begin{align}\label{eq:moment_Y_4}
			& \E \left[ Y_t(x)^q  \ind(t<\tau_N)\right]
			\nonumber \\
			\le & c( P_tY_0(x))^q + c+  c \int_0^t \, dr ((t-r)^{-\frac{p-1}{2}}+(t-r)^{-\frac{q-1}{2}}) \int_{\R} p_{t-r}(x-y) \E\left[ Y_r(y)^{\alpha\beta} \ind(r<\tau_N)\right] \, dy 
			\nonumber \\
			\le & c( P_tY_0(x))^q + c+ c \int_0^t \, dr (t-r)^{-\frac{p-1}{2}} \int_{\R} p_{t-r}(x-y) \E\left[ Y_r(y)^{\alpha\beta} \ind(r<\tau_N)\right] \, dy  
		\end{align}	
		where in the last line we have used the fact that $(t-r)^{-\frac{q-1}{2}} \le C_T (t-r)^{-\frac{p-1}{2}}$. 
		
		When $q=\alpha\beta$ this becomes
		\begin{align}\label{eq:moment_Y_5}
			& \E \left[ Y_{t}(x)^{\alpha\beta} \ind(t<\tau_N)\right]
			\nonumber \\
			\le & c (P_tY_0(x))^{\alpha\beta} + c+ c \int_0^t \, dr (t-r)^{-\frac{p-1}{2}} \int_{\R} p_{t-r}(x-y) \E\left[ Y_r(y)^{\alpha\beta} \ind(r<\tau_N)\right] \, dy.		 			
		\end{align}		
		Let $s \in [0,T]$ be such that $s\ge t$. Apply $P_{s-t}$ to both sides and use Fubini's theorem,
		\begin{align}\label{eq:moment_Y_6}
			& \E \left[ P_{s-t}\left(Y_{t}^{\alpha\beta}\right)(x) \ind(t<\tau_N)\right]
			\nonumber \\
			\le & c(P_sY_0(x))^{\alpha\beta} + c+ c \int_0^t \, dr\, (t-r)^{-\frac{p-1}{2}} \int_{\R}  p_{s-t}(x-y)\int_{\R} p_{t-r}(y-z) \E\left[ Y_r(z)^{\alpha\beta} \ind(r<\tau_N)\right] \, dz \, dy 
			\nonumber \\
			\le &c(P_sY_0(x))^{\alpha\beta} + c+ c \int_0^t \, dr (t-r)^{-\frac{p-1}{2}} \int_{\R}  p_{s-r}(x-z)\E\left[ Y_r(z)^{\alpha\beta} \ind(r<\tau_N)\right] \, dz 
			\nonumber \\
			= & c(P_sY_0(x))^{\alpha\beta} + c+ c \int_0^t \, dr\, (t-r)^{-\frac{p-1}{2}} \E\left[ P_{s-r}\left(Y_r^{\alpha\beta}\right)(x) \ind(r<\tau_N)\right]
			\nonumber \\
			\le & c(Y_0) s^{-\frac{\alpha\beta}{2 }} + c+ c \int_0^t \, dr\, (t-r)^{-\frac{p-1}{2}} \E\left[ P_{s-r}\left(Y_r^{\alpha\beta}\right)(x) \ind(r<\tau_N)\right]
		\end{align}
		where we have used the assumption on $Y_0$ to obtain the bound on $(P_sY_0(x))^{\alpha\beta}$. The constants appearing hereafter all depend on $Y_0$. Since the above holds for every $t \in [0,s]$, by Lemma \ref{lem:gronwall} there exists a function $C_1$ on $(0,T]$ and a constant $C_2(s)>0$ such that for a.e. $t \le s$,
		\begin{align}\label{eq:moment_Y_7}
			\E \left[ P_{s-t}\left(Y_{t}^{\alpha\beta}\right)(x) \ind(t<\tau_N)\right] \le C_1(t) + \int_0^t C_1(r) e^{C_2(s)r} \, dr.
		\end{align} 
		
		Observe from the proof of Lemma \ref{lem:gronwall} that 
		\begin{align*}
			C_1(t) = o(t^{-\frac{\alpha\beta}{2}}) \text{ as } t \downarrow 0 
		\end{align*} and that the constant $C_2(s)$ is non-decreasing in $s$. So we have $C_2(s) \le C_2(T)$ and \eqref{eq:moment_Y_7} gives
		\begin{align} \label{eq:moment_Y_8}
			\E \left[ P_{s-t}\left(Y_{t}^{\alpha\beta}\right)(x) \ind(t<\tau_N)\right] \le C_3 t^{-\frac{\alpha\beta}{2}} + C_3\int_0^t r^{-\frac{\alpha\beta}{2}}  e^{C_2(T)r} \, dr.
		\end{align}
		for a.e. $t\le s \le T$. Here $C_3= C_3(T) >0$ is a constant. Now replace $s$ by $t$ in the above. We get,
		\begin{align}\label{eq:moment_Y_9}
			\E \left[ \left(Y_{t}(x)\right)^{\alpha\beta} \ind(t<\tau_N)\right]
			\le C_3 t^{-\frac{\alpha\beta}{2}} + C_3\int_0^t r^{-\frac{\alpha\beta}{2}}  e^{C_2(T)r} \, dr.				
		\end{align} 
		for a.e. $t \in [0,T]$. 
		
		We now plug this into \eqref{eq:moment_Y_4} to get,
		\begin{align}\label{eq:moment_Y_10}
			& \E \left[ Y_t(x)^q  \ind(t<\tau_N)\right]
			\nonumber \\
			\le & c \, t^{-\frac{q}{2}} + c+ c \int_0^t \, dr (t-r)^{-\frac{p-1}{2}} \int_{\R} p_{t-r}(x-y) \E\left[ Y_r(y)^{\alpha\beta} \ind(r<\tau_N)\right] \, dy
			\nonumber \\
			\le & c \, t^{-\frac{q}{2}} + c+ C_4 \int_0^t \, dr (t-r)^{-\frac{p-1}{2}} r^{-\frac{\alpha\beta}{2}} + C_4 \int_0^t \, dr (t-r)^{-\frac{p-1}{2}} r^{1-\frac{\alpha\beta}{2}}
			\nonumber \\
			= & c \, t^{-\frac{q}{2}} + c+ C_5 t^{1-\frac{p-1}{2} - \frac{\alpha\beta}{2}} + C_6 t^{2-\frac{p-1}{2} - \frac{\alpha\beta}{2}}
		\end{align}
		where $C_4 = C_4(T)>0$ is a constant and $C_5 = C_4 \, B(1-\frac{p-1}{2}, 1- \frac{\alpha\beta}{2})$, $C_6 =C_4 \, B(1-\frac{p-1}{2}, 2- \frac{\alpha\beta}{2})$ with $B$ denoting the Beta function.  At this point we consider the different regimes that the parameters $\alpha$, $\beta$ and $q$ can occupy. When $\frac{1}{\alpha}< \beta < (\frac{3}{\alpha}-1)\wedge 1$, one can find $p \in (\alpha ,2)$ such that $\frac{p-1}{2} + \frac{\alpha\beta}{2}\le 1$. Fix such a $p$ and observe that the exponent in the second term in RHS of \eqref{eq:moment_Y_10} is non-negative. This proves, 
			\begin{align}\label{eq:moment_Y_11}
				\E \left[ Y_t(x)^q  \ind(t<\tau_N)\right]
				\le C \, t^{-\frac{q}{2}} + C
			\end{align}
			where $C$ is constant depending on $T$, $Y_0$ and the parameters $\alpha, \beta$, when $0< \beta < (\frac{3}{\alpha}-1)\wedge 1$.
			
			Next, we consider the case when $\frac{3}{\alpha} -1<\beta<1$ (which requires $\frac{3}{2} <\alpha < 2$). Observe that $\alpha(1+\beta)-3< 2\alpha -3 <1.$ Since $q> 1$ by assumption, we have $\frac{q}{2} > \frac{\alpha-1}{2} + \frac{\alpha\beta}{2}-1$ and therefore there is a $p \in (\alpha, 2)$ such that $\frac{q}{2} > \frac{p-1}{2} + \frac{\alpha\beta}{2}-1$. Since \eqref{eq:moment_Y_10} holds for this $p$, so does \eqref{eq:moment_Y_11} for small enough $t$. 
			
			In \eqref{eq:moment_Y_11}, take $N \to \infty$ and we obtain the required result.
		
	\end{proof}
	
	We here observe that the previous moment estimate can be utilized to show that the stochastic integrals appearing in \eqref{eq:main_spde_mild}, \eqref{eq:main_spde_mild_PRM_1} and \eqref{eq:main_spde_mild_PRM_2} are martingales. For this we recall the notion of a \emph{class DL} process (see \cite[Definition IV.1.6]{ry99}).
	
	A real valued and adapted stochastic process $X$ is said to be of \emph{class DL} if for every $t>0$, the set $$\{X_\tau : \tau\le t \text{ is a stopping time}\}$$ is uniformly integrable. And we know from \cite[Proposition IV.1.7]{ry99} that a local martingale $X$ is a martingale if and only if it is of class DL. For practical purposes it is enough to show that there is an $\epsilon >0$ such that $$\sup_{\tau \le t}\E (|X_{\tau}|^{1+\epsilon}) < \infty$$ where the supremum is taken over all stopping times $\tau \le t$.  
	
	Using this we observe that $M^Y_t (\psi)$ defined in \eqref{eq:main_mart_problem} is a martingale. This will be crucial for simplifying our approximate duality argument in the proof of Proposition \ref{prop:lem3.1_m98}.
	
	\begin{prop}\label{prop:M^Y_mart}
		For each $\psi \in \mc{S}_+$, the local martingale $M^Y(\psi)$ is in fact a martingale with respect to $\mc{F}^Y$, the filtration generated by $Y$.
	\end{prop}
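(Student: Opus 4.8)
The plan is to apply the class DL criterion recalled just above: since $M^Y(\psi)$ is already an $\mc{F}^Y$-local martingale (with $M^Y_0(\psi)=0$), it suffices to show that for each fixed $t>0$ there is a single integrable random variable dominating $|M^Y_\tau(\psi)|$ for every stopping time $\tau\le t$. This makes the family $\{M^Y_\tau(\psi):\tau\le t\}$ uniformly integrable, so $M^Y(\psi)$ is of class DL and hence a martingale by \cite[Proposition IV.1.7]{ry99}.

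First I would use that $\psi\in\mc{S}_+$ and that each $Y_s$ is a non-negative measure, which gives $0<e^{-\langle Y_{s-},\psi\rangle}\le 1$ and $e^{-\langle Y_t,\psi\rangle},e^{-\langle Y_0,\psi\rangle}\in(0,1]$. Together with the triangle inequality this yields, for any stopping time $\tau\le t$,
\begin{align*}
|M^Y_\tau(\psi)| &\le 2+\int_0^t e^{-\langle Y_{s-},\psi\rangle}\left|{-}\langle Y_{s-},\tfrac12\Delta\psi\rangle+\langle Y_{s-}^{\alpha\beta},\psi^\alpha\rangle\right|ds \\
&\le 2+\int_0^t\left(\tfrac12\int_\R|\Delta\psi(x)|\,Y_{s-}(x)\,dx+\int_\R Y_{s-}(x)^{\alpha\beta}\psi(x)^\alpha\,dx\right)ds=:2+A_t,
\end{align*}
where the right-hand side no longer depends on $\tau$. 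It then remains to check that $\E[A_t]<\infty$. For this I would invoke Proposition \ref{prop:moment_est_Y_2} with the exponent $q=\alpha\beta$, which is admissible because $1<\alpha\beta<\alpha$ under the standing assumptions: it gives $\sup_{x\in\R}\E[Y_s(x)^{\alpha\beta}]\le C_1 e^{C_2 s}$ and, by Jensen's inequality, $\sup_{x\in\R}\E[Y_s(x)]\le(C_1 e^{C_2 s})^{1/(\alpha\beta)}$, so both are bounded on $[0,t]$ by a finite constant $K_t$. Since $\psi$ is Schwartz we have $\Delta\psi,\psi^\alpha\in L^1(\R)$, and since $s\mapsto Y_s$ is cadlag one may replace $Y_{s-}$ by $Y_s$ for $ds$-almost every $s$; Tonelli's theorem then gives
\begin{align*}
\E[A_t] &\le\tfrac12\int_0^t\int_\R|\Delta\psi(x)|\,\E[Y_s(x)]\,dx\,ds+\int_0^t\int_\R\psi(x)^\alpha\,\E[Y_s(x)^{\alpha\beta}]\,dx\,ds \\
&\le K_t\,t\left(\tfrac12\|\Delta\psi\|_1+\|\psi\|_\alpha^\alpha\right)<\infty,
\end{align*}
so $2+A_t$ is the required integrable dominating variable and the argument is complete.

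The one genuinely delicate point is the joint $(s,x)$-integrability of the drift integrand of \eqref{eq:main_mart_problem} after taking expectations: the moment bound of Proposition \ref{prop:moment_est_Y_2} is uniform in $x$ but is not integrable over $\R$, so one cannot route the estimate through a bound on $\|Y_{s-}\|_{\alpha\beta}^{\alpha\beta}$. Instead the spatial integrability must come entirely from the test function, which forces one to handle the Laplacian term via the first moment against $\Delta\psi\in L^1$ and the nonlinear term via the $\alpha\beta$-th moment against $\psi^\alpha\in L^1$. Everything else is routine: the exponential factors are bounded by $1$, and the a priori localization by $\gamma^Y(k)$ enters only through the fact that $M^Y(\psi)$ is already known to be a local martingale.
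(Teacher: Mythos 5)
Your proof is correct, and it takes a genuinely simpler route to the class DL criterion than the paper does. The paper verifies the sufficient condition $\sup_{\tau\le t}\E\bigl(|M^Y_\tau(\psi)|^{1+\epsilon}\bigr)<\infty$ for a carefully chosen $\epsilon\in(0,\tfrac{1}{\beta}-1)$: this forces two applications of Jensen's inequality (once to normalize the time integral by $\tau$, once to pull the $(1+\epsilon)$-th power through the spatial integral) and requires the constraint $\alpha\beta(1+\epsilon)<\alpha$ so that Proposition~\ref{prop:moment_est_Y_2} still applies to $\E[Y_s(x)^{\alpha\beta(1+\epsilon)}]$. You instead exhibit a single integrable random variable $2+A_t$ that dominates $|M^Y_\tau(\psi)|$ uniformly over all stopping times $\tau\le t$, which gives uniform integrability of $\{M^Y_\tau(\psi):\tau\le t\}$ directly by domination, with no auxiliary exponent to juggle. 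Both arguments rest on the same moment estimate (Proposition~\ref{prop:moment_est_Y_2}) and on the fact that $\psi\in\mc S_+$ supplies the spatial integrability of $\Delta\psi$ and $\psi^\alpha$ (your closing remark about this being the genuinely delicate point is well observed — the moment bound is only uniform in $x$, not integrable in $x$). Your version needs only the $q=\alpha\beta$ case of the moment estimate together with Jensen to extract the first moment, whereas the paper needs the bound at $q=\alpha\beta(1+\epsilon)$; your argument is therefore both shorter and uses a weaker input, at no apparent cost.
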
	
	
	\begin{proof}
		Recall that 
		\begin{align*}
			M^Y_t(\psi)  = e^{-\langle Y_t, \psi\rangle} - e^{-\langle Y_0, \psi\rangle}  - \int_0^t I(Y_{s-}, \psi) \,d s
		\end{align*} 
		is an $\mc{F}^{Y}_t$-local martingale, where $$ I(Y_{s-}, \psi) = e^{- \langle Y_{s-}, \psi\rangle } \left( -\langle Y_{s-}, \frac{1}{2} \Delta \psi\rangle + \langle Y_{s-}^{\alpha \beta}, \psi^{\alpha} \rangle\right) $$
		
		To show that $M^Y_t(\psi)$ is a martingale we show that it is in class DL, i.e. for each $t >0$, 
		\begin{align}\label{eq:M^Y_mart_1}
			\sup_{\tau \le t }\E\left( |M^Y_{\tau}(\psi)|^{1+\epsilon} \right) < \infty
		\end{align} 
		for some $\epsilon>0$. The supremum ranges over all $\mc{F}^Y$-stopping times $\tau $ that are bounded by $t$.
		
		From the expression above it is enough to prove
		\begin{align}\label{eq:M^Y_mart_2}
			\sup_{\tau\le t}\E \left(  \left|\int_0^{\tau} I(Y_{s-}, \psi) \,d s\right|^{1+\epsilon} \right) < \infty.
		\end{align}
		
		Fix a stopping time $\tau \le t$. By Jensen's inequality
		\begin{align}\label{eq:M^Y_mart_3}
			\left|\int_0^{\tau} I(Y_{s-}, \psi) \,d s\right|^{1+\epsilon}  		
			= &  \left| \int_0^{\tau} e^{- \langle Y_{s-}, \psi\rangle } \left( -\langle Y_{s-}, \frac{1}{2} \Delta \psi\rangle + \langle Y_{s-}^{\alpha \beta}, \psi^{\alpha} \rangle\right)\,d s \right| ^{1+\epsilon} 
			\nonumber  \\
			= &\tau^{1+\epsilon}   	\left| \frac{1}{\tau} \int_0^{\tau} e^{- \langle Y_{s-}, \psi\rangle } \left( -\langle Y_{s-}, \frac{1}{2} \Delta \psi\rangle + \langle Y_{s-}^{\alpha \beta}, \psi^{\alpha} \rangle\right) \,d s \right| ^{1+\epsilon} 
			\nonumber \\
			\le & \tau^{\epsilon}  \int_0^{\tau}\left|  -\langle Y_{s-}, \frac{1}{2} \Delta \psi\rangle + \langle Y_{s-}^{\alpha \beta}, \psi^{\alpha} \rangle \right| ^{1+\epsilon}  \,d s 
			\nonumber \\
			\le & C_{\epsilon}t^{\epsilon}  \int_0^{t}\left(  |\langle Y_{s-}, \frac{1}{2} \Delta \psi\rangle|^{1+\epsilon}  + |\langle Y_{s-}^{\alpha \beta}, \psi^{\alpha} \rangle|^{1+\epsilon}  \right)  \,d s.
		\end{align}

		Let $0<\epsilon<\frac{1}{\beta}-1<\alpha -1 <1$. Again apply Jensen's inequality, Fubini's Theorem and Proposition \ref{prop:moment_est_Y_2}.
		\begin{align}\label{eq:M^Y_mart_4}
			\E \int_0^{t} |\langle Y_{s-}, \frac{1}{2} \Delta \psi\rangle|^{1+\epsilon}  \,d s 		
			= & \frac{1}{2^{1+\epsilon}} \E   \int_0^{\tau}  \left| \int_{\R}Y_{s-} (x)\Delta \psi(x) \,d x\right|^{1+\epsilon}\,d s
			\nonumber \\
			\le &  \frac{\lVert\Delta \psi\rVert_{1}^{1+\epsilon} }{2^{1+\epsilon}} \E  \int_0^{t}  \left| \frac{1}{\lVert\Delta \psi\rVert_{1} } \int_{\R}Y_{s-}(x) |\Delta \psi(x)| \,d x\right|^{1+\epsilon}\,d s
			\nonumber \\
			\le &  \frac{\lVert\Delta \psi\rVert_{1}^{\epsilon} }{2^{1+\epsilon}}  \int_0^{t}   \int_{\R} \E Y_{s-}(x)^{1+\epsilon} |\Delta \psi(x)| \,d x \,d s
			\nonumber \\ 
			\le &  C_T  \frac{\lVert\Delta \psi\rVert_{1}^{1+\epsilon} }{2^{1+\epsilon}}  \int_0^{t}   s^{-\frac{1+\epsilon}{2}} \,d s + C_T  \frac{\lVert\Delta \psi\rVert_{1}^{1+\epsilon} }{2^{1+\epsilon}} 
			\nonumber \\
			= &   C_T \frac{\lVert\Delta \psi\rVert_{1}^{1+\epsilon} }{2^{1+\epsilon}} (1+ t^{1-\frac{1+\epsilon}{2}}).  					
		\end{align}
		
		Similarly,   
		\begin{align}\label{eq:M^Y_mart_5}
			\E \int_0^{t} |\langle Y_{s-}^{\alpha\beta}, \frac{1}{2} \Delta \psi\rangle|^{1+\epsilon}  \,d s 	
			\le &  \lVert \psi^{\alpha}\rVert_{1}^{\epsilon} \int_0^{t}   \int_{\R} \E Y_{s-}(x)^{\alpha\beta(1+\epsilon)} \psi(x)^{\alpha} \,d x \,d s		
			\nonumber \\
			= &  C_T \frac{\lVert\psi^{\alpha}\rVert_{1}^{1+\epsilon} }{2^{1+\epsilon}}  \int_0^{t}   s^{-\frac{\alpha\beta(1+\epsilon)}{2}} \,d s  + C_T \frac{\lVert\psi^{\alpha}\rVert_{1}^{1+\epsilon} }{2^{1+\epsilon} } 
			\nonumber \\
			= &  C_{T, \psi, \alpha, \beta, \epsilon} (t^{1-\frac{\alpha\beta(1+\epsilon)}{2}} +1) 
		\end{align}
		
		Note that $1-\frac{1+\epsilon}{2} \ge 0$ and $1-\frac{\alpha\beta(1+\epsilon)}{2} \ge 0$ by our conditions on $\alpha$, $\beta$ and $\epsilon$. Plugging \eqref{eq:M^Y_mart_4} and \eqref{eq:M^Y_mart_5} in \eqref{eq:M^Y_mart_3} we get,
		\begin{align}\label{eq:M^Y_mart_6}
			\E \left(  \left|\int_0^{\tau} I(Y_{s-}, \psi) \,d s\right|^{1+\epsilon} \right)		
			\le C_{T,\psi, \alpha, \beta, \epsilon} (1+ t^{1-\frac{\alpha\beta(1+\epsilon)}{2}} + t^{1-\frac{1+\epsilon}{2}})
		\end{align}
		Taking supremum over all $\tau \le t$ gives \eqref{eq:M^Y_mart_2}.
	\end{proof}
	
	
	We now show the above result holds for $\psi:\R_+ \x \R \to \R$ satisfying certain assumptions. 
	
	\begin{prop}\label{prop:time_dep_mart_prob}
		Let $T \in (0, \infty)$. If $Y$ is a solution to the martingale problem \eqref{eq:main_mart_problem} and $\psi :[0,T]\times \R \to \R$ is such that 
		\begin{itemize}
			\item[(i)] The map $[0, T] \ni s \mapsto \psi_s  \in \bf{L}^{\eta}(\R)\cap \bf{L}^{\rho}(\R)$ is continuous, for some fixed $\eta \in (\frac{1}{\beta}, \alpha)$ and $\rho \in (\alpha, \frac{\alpha}{\beta}\wedge 2 )$. (Note that, as $\frac{1}{\alpha}<\beta < 1$ and $\alpha <2$, such $\eta$ and $\rho$ exist.)				
			
			\item[(ii)] $\sup_{s \le T} \left\lVert \frac{\partial}{\partial s}\psi_s \right\rVert_{\bf{L}^{\frac{\alpha\beta}{\alpha\beta-1}} (\R)} < \infty$.		
			
			\item[(iii)] The map $[0,T] \to \bf{L}^{\infty}(\R)$, $s \mapsto \frac{\partial^2}{\partial x^2} \psi_s$ is continuous.	
		\end{itemize}
		Then,		
		\begin{align}\label{eq:main_mart_problem_time_dependent}
			&\tilde{M}^Y_{t}(\psi)  = e^{-\langle Y_{t}, \psi_{t} \rangle }  -  e^{-\langle Y_0, \psi_0 \rangle} - \int_0^ {t} \tilde{I}(Y_{s-}, \psi_s) \,d s  
		\end{align} 
		is an $\mc{F}^{Y}_t$ martingale, where 
		\begin{align}\label{eq:tilde_I}
			\tilde{I}(Y_{s-}, \psi_s) =  e^{- \langle Y_{s-}, \psi_s\rangle } \left[ -\langle Y_{s-}, \frac{1}{2} \partial^2_{xx} \psi_s + \partial_s \psi_s \rangle + \langle Y_{s-}^{\alpha \beta}, \psi_s^{\alpha} \rangle  \right].
		\end{align}
	\end{prop}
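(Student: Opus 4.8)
The plan is to deduce the time‑inhomogeneous statement from the time‑homogeneous martingale problem of Proposition~\ref{prop:M^Y_mart} by a Riemann‑sum argument: on a fine partition of $[s,t]$ we freeze the test function on each subinterval, use Proposition~\ref{prop:M^Y_mart} to account for the motion of $Y$ against the frozen function, and use the fundamental theorem of calculus in the time variable to account for the motion of $\psi$. We take $\psi_s\ge 0$, the only case needed below, and the one making $\psi_s^\alpha,\psi_s^{\alpha\beta}$ and $e^{-\langle Y_{s-},\psi_s\rangle}\le 1$ unambiguous. We first note that Proposition~\ref{prop:M^Y_mart} extends from $\mc S_+$ to any fixed non‑negative $\chi$ with $\chi\in L^1(\R)\cap L^2(\R)$ and $\Delta\chi\in L^\infty(\R)$: approximate $\chi$ by $\chi_m\in\mc S_+$ with $\chi_m\to\chi$ in $L^1\cap L^2$ and $\Delta\chi_m\to\Delta\chi$ boundedly and pass to the limit in \eqref{eq:main_mart_problem} with the help of the estimates below (alternatively, prove the proposition first for $\psi_s\in\mc S_+$, all $s$, and then approximate $\psi$ uniformly in $s$). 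Thus the true martingale property of $M^Y(\psi_u)$ is available for every fixed $u\in[0,T]$.

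The two estimates that make the integrals below finite are the following. From \eqref{eq:main_spde_mild} and Lemma~\ref{lem:stoch_int_mart} the stochastic integral has mean zero, so $\E Y_r(x)=P_r\varphi(x)$, and hence for any $g$,
\[
\E\,\bigl|\langle Y_{r-},g\rangle\bigr|\ \le\ \langle P_r\varphi,|g|\rangle\ \le\ \|P_r\varphi\|_{\alpha\beta}\,\|g\|_{\frac{\alpha\beta}{\alpha\beta-1}}\ \le\ \|\varphi\|_{\alpha\beta}\,\|g\|_{\frac{\alpha\beta}{\alpha\beta-1}}
\]
by H\"older's inequality and $L^p$–contractivity of $P_r$; with $g=\partial_s\psi_s$ this and (ii) control the time‑derivative term, and with $g=\tfrac12\Delta\psi_s$ (or just $\E Y_{r-}(\R)=\|\varphi\|_1$) this and (iii) control the Laplacian term. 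For the nonlinear term Proposition~\ref{prop:moment_est_Y_2} gives, using the \emph{integrable} weight $\psi_r^\alpha\in L^1$,
\[
\E\,\langle Y_{r-}^{\alpha\beta},\psi_r^\alpha\rangle\ =\ \int_{\R}\E\bigl(Y_r(x)^{\alpha\beta}\bigr)\,\psi_r(x)^\alpha\,dx\ \le\ C_1 e^{C_2 r}\,\|\psi_r\|_\alpha^\alpha,
\]
with $\sup_{r\le T}\|\psi_r\|_\alpha<\infty$ by interpolating (i) between $L^1$ and $L^2$. Therefore $\E\int_0^t|\tilde I(Y_{s-},\psi_s)|\,ds<\infty$ and $\tilde M^Y_t(\psi)$ is integrable for $t\le T$. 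Note that spatial integrability of $x\mapsto\E Y_r(x)^{\alpha\beta}$ is never used — the uniform‑in‑$x$ bound of Proposition~\ref{prop:moment_est_Y_2} does not provide it — which is exactly why the time‑derivative term must be estimated through the first moment $\E Y_r(\cdot)=P_r\varphi$ rather than through $\|Y_{r-}\|_{\alpha\beta}$ in expectation, and also why the martingale property is obtained by a local‑martingale‑plus‑domination argument rather than by the class‑$DL$ argument of Proposition~\ref{prop:M^Y_mart}.

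It remains to show $\E[\tilde M^Y_t(\psi)\mid\mc F^Y_s]=\tilde M^Y_s(\psi)$ for $s<t\le T$. Fix a partition $s=u_0<\dots<u_m=t$ and telescope
\[
e^{-\langle Y_t,\psi_t\rangle}-e^{-\langle Y_s,\psi_s\rangle}\ =\ \sum_j\Bigl(e^{-\langle Y_{u_{j+1}},\psi_{u_j}\rangle}-e^{-\langle Y_{u_j},\psi_{u_j}\rangle}\Bigr)+\sum_j\Bigl(e^{-\langle Y_{u_{j+1}},\psi_{u_{j+1}}\rangle}-e^{-\langle Y_{u_{j+1}},\psi_{u_j}\rangle}\Bigr)=:\Sigma_m^{(1)}+\Sigma_m^{(2)}.
\]
The $j$‑th summand of $\Sigma_m^{(1)}$ equals $M^Y_{u_{j+1}}(\psi_{u_j})-M^Y_{u_j}(\psi_{u_j})+\int_{u_j}^{u_{j+1}}I(Y_{r-},\psi_{u_j})\,dr$, where $I(\mu,\chi)=e^{-\langle\mu,\chi\rangle}(-\langle\mu,\tfrac12\Delta\chi\rangle+\langle\mu^{\alpha\beta},\chi^\alpha\rangle)$; since $M^Y(\psi_{u_j})$ is a martingale and $s\le u_j$, $\E[\Sigma_m^{(1)}\mid\mc F^Y_s]=\E[\int_s^t I(Y_{r-},\psi_{\kappa_m(r)})\,dr\mid\mc F^Y_s]$ with $\kappa_m(r)$ the left endpoint of the cell containing $r$, and as the mesh tends to $0$ one checks $\E\int_s^t|I(Y_{r-},\psi_{\kappa_m(r)})-I(Y_{r-},\psi_r)|\,dr\to0$ using (i), (iii), the mean formula for the linear parts, and $\E\langle Y_{r-}^{\alpha\beta},|\psi_{\kappa_m(r)}^\alpha-\psi_r^\alpha|\rangle\le C_1 e^{C_2 r}\|\psi_{\kappa_m(r)}^\alpha-\psi_r^\alpha\|_1\to0$ (the last limit from $|a^\alpha-b^\alpha|\le\alpha(a\vee b)^{\alpha-1}|a-b|$, H\"older and interpolation of (i)); hence $\E[\Sigma_m^{(1)}\mid\mc F^Y_s]\to\E[\int_s^t I(Y_{r-},\psi_r)\,dr\mid\mc F^Y_s]$. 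For $\Sigma_m^{(2)}$, the fundamental theorem of calculus applied to $r\mapsto e^{-\langle Y_{u_{j+1}},\psi_r\rangle}$ (differentiable with derivative $-\langle Y_{u_{j+1}},\partial_r\psi_r\rangle e^{-\langle Y_{u_{j+1}},\psi_r\rangle}$ by (i)) shows the $j$‑th summand is $-\int_{u_j}^{u_{j+1}}\langle Y_{u_{j+1}},\partial_r\psi_r\rangle e^{-\langle Y_{u_{j+1}},\psi_r\rangle}\,dr$, so $\Sigma_m^{(2)}=-\int_s^t\langle Y_{\lambda_m(r)},\partial_r\psi_r\rangle e^{-\langle Y_{\lambda_m(r)},\psi_r\rangle}\,dr$ with $\lambda_m(r)\downarrow r$ the right endpoint; since $Y$ is cadlag, $Y_{\lambda_m(r)}\to Y_r$ for every $r$, and passing to the limit (with $r\mapsto\E|\langle Y_{\lambda_m(r)},\partial_r\psi_r\rangle|\le\|\varphi\|_{\alpha\beta}\sup_u\|\partial_u\psi_u\|_{\frac{\alpha\beta}{\alpha\beta-1}}$ as an $m$‑uniform majorant) gives $\Sigma_m^{(2)}\to-\int_s^t\langle Y_{r-},\partial_r\psi_r\rangle e^{-\langle Y_{r-},\psi_r\rangle}\,dr$ in $L^1$, the value of the integrand at $r$ versus $r-$ being irrelevant. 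Adding the two limits and using $I(Y_{r-},\psi_r)-\langle Y_{r-},\partial_r\psi_r\rangle e^{-\langle Y_{r-},\psi_r\rangle}=\tilde I(Y_{r-},\psi_r)$ gives $\E[e^{-\langle Y_t,\psi_t\rangle}-e^{-\langle Y_s,\psi_s\rangle}\mid\mc F^Y_s]=\E[\int_s^t\tilde I(Y_{r-},\psi_r)\,dr\mid\mc F^Y_s]$, i.e. $\E[\tilde M^Y_t(\psi)\mid\mc F^Y_s]=\tilde M^Y_s(\psi)$.

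The main obstacle is the limit in $\Sigma_m^{(2)}$: one must justify $\langle Y_{\lambda_m(r)},\partial_r\psi_r\rangle e^{-\langle Y_{\lambda_m(r)},\psi_r\rangle}\to\langle Y_r,\partial_r\psi_r\rangle e^{-\langle Y_r,\psi_r\rangle}$ a.s.\ together with an $L^1(d\pr\,dr)$ majorant, even though $\partial_r\psi_r$ is controlled only in $L^{\frac{\alpha\beta}{\alpha\beta-1}}$ by (ii); this forces the careful pairing $\|Y_{\lambda_m(r)}\|_{\alpha\beta}\,\|\partial_r\psi_r\|_{\frac{\alpha\beta}{\alpha\beta-1}}$, the a.s.\ finiteness $\int_0^\cdot\|Y_u\|_{\alpha\beta}^{\alpha\beta}\,du<\infty$ (built into the notion of a solution via \eqref{eq:gamma^Y}), and, after taking expectations, the mean identity $\E Y_r(\cdot)=P_r\varphi$. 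Equivalently, one can organise the whole argument by first localising with stopping times $\rho_k\uparrow\infty$ that also bound $\sup_{r\le\rho_k}Y_r(\R)$, on which every term in the telescoping is bounded by a deterministic constant so that only bounded convergence is needed and $\tilde M^Y(\psi)$ is a local martingale, and then upgrading to a true martingale by dominating $\sup_k|\tilde M^Y_{t\wedge\rho_k}(\psi)|$ by $2+\int_0^t|\tilde I(Y_{s-},\psi_s)|\,ds\in L^1$. Everything else is routine bookkeeping with Propositions~\ref{prop:M^Y_mart} and \ref{prop:moment_est_Y_2} and the continuity hypotheses (i)--(iii).
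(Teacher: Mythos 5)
Your argument is correct in substance but follows a genuinely different route from the paper. The paper first establishes a time-dependent \emph{linear} weak form of the equation (Lemma \ref{lem:time_dep_weak_form}), by telescoping $\langle Y_t,\psi_t\rangle$ over a partition and controlling the stochastic-integral increment through the PRM representation of $L^\alpha$ together with Kunita's inequality and Proposition \ref{prop:moment_est_Y_2}, and then obtains the exponential martingale by applying It\^{o}'s formula for Poisson integrals (Lemma \ref{lem:weak_form_to_mart}). You instead telescope the exponential itself, splitting each increment into a $Y$-increment against the frozen test function --- handled by the true-martingale property of $M^Y(\psi_{u_j})$ from Proposition \ref{prop:M^Y_mart} --- and a $\psi$-increment handled by the fundamental theorem of calculus, and then pass to the limit in the Riemann sums. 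Your route has the advantage of using only the martingale-problem hypothesis as literally stated (the paper's proof actually uses the SPDE/PRM representation of $Y$) and of avoiding It\^{o}'s formula for jump processes altogether; the price is (a) you must first extend Proposition \ref{prop:M^Y_mart} from $\mc{S}_+$ to the non-Schwartz functions $\psi_{u_j}$, a mollification-plus-cutoff step you only sketch, and (b) the nonlinear term $\langle Y_{r-}^{\alpha\beta},\psi_{\kappa_m(r)}^\alpha\rangle$ now sits inside the Riemann-sum limit and needs a uniform-integrability argument (a $(1+\epsilon)$-moment bound as in \eqref{eq:M^Y_mart_5}), whereas in the paper the nonlinearity enters only at a single time through It\^{o}. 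Two details are worth making explicit: the a.s.\ convergence $\langle Y_{r-},\psi_{\kappa_m(r)}-\psi_r\rangle\to 0$ requires pairing the measure $Y_{r-}$ with $\psi_{\kappa_m(r)}-\psi_r$ in a norm you actually control --- since the dual exponent of $\alpha\beta$ exceeds $2$, hypothesis (i) alone does not suffice and you should interpolate (i) with (iii) to get continuity of $s\mapsto\psi_s$ in $L^\infty$; and the $L^1$-convergence of $\Sigma^{(2)}_m$ needs uniform integrability of $\langle Y_{\lambda_m(r)},\partial_r\psi_r\rangle$ over $m$, not merely the $m$-uniform first-moment bound you quote. Both points are at the same level of implicitness as the paper's own treatment (which likewise uses boundedness and spatial continuity of $\dot\psi_s$ without listing them), so I regard them as gaps in exposition rather than in the argument.
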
	
	
	This result is probably already known, but we could not find a self-contained proof in the literature. Therefore we present its proof in the Appendix \ref{app:time_dep_mart}.		
	
	\section{Overview of the Proof of Theorem \ref{thm:required_approx_duality_one_sided}}\label{sec:proof_thm}
	
	In this section we describe our plan for proving Theorem \ref{thm:required_approx_duality_one_sided}. Our proof follows the argument in \cite{mytnik98} and will be split into various propositions which we state in the following. At the end of this section we establish the theorem assuming these results.     	
	
	The first proposition describes the behaviour of $Y$ when coupled with the solutions of the evolution equations used to construct $Z^{(n)}$. In what follows we denote by $\E_Y$ the expectation with respect to $Y$. In particular, under $\E_Y$ we treat all the random variables used to construct $Z^{(n)}$ in Section \ref{sec:prelim} as non-random owing to our assumption of independence.
	
	\begin{prop}\label{prop:lem3.1_m98}
		Let $Y$ be a solution to the martingale problem \eqref{eq:main_mart_problem}. Then for each  $t \in [0,T]$, $ n \ge 1$ and $\mu \in M_F$,
		\begin{align}\label{eq:(3.5)_m98}
			\E_Y \left[e^{-\langle Y_{T-t}, V^n_{t}(\mu)\rangle}	\right]
			=   \E_Y \left[e^{-\langle Y_{T}, V^n_{0}(\mu) \rangle}\right] + \E_Y\left[ \int^{t}_{0} \tilde{\mc{I}}(Y_{(T-r)-}, V^n_r(\mu))\, dr\right]
		\end{align}		
		where 
		\begin{align*}
			\tilde{\mc{I}}(Y_{(T-r)-}, V^n_r(\mu)) = e^{-\langle Y_{(T-r)-}, V^n_r(\mu) \rangle}  \left\{ -\langle Y^{\alpha\beta}_{(T-r)-}, \left(V^n_r(\mu)\right)^{\alpha} \rangle + \langle Y_{(T-r)-}, b_n \left(V^n_r(\mu)\right)^{\alpha}\rangle \right\},
		\end{align*}
		and $V^n$ is the solution of the PDE \eqref{eq:pde_Z}.
	\end{prop}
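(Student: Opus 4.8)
The plan is to use the deterministic flow $V^{n}(\mu)$, run backwards in time, as a time-dependent test function in the martingale problem of Proposition~\ref{prop:time_dep_mart_prob}, and then to simplify the resulting drift by means of the defining PDE~\eqref{eq:pde_Z}. Fix $n\ge1$, $\mu\in M_F$, and $t\in[0,T]$; the case $t=0$ is trivial, so take $t>0$. For $\epsilon\in(0,t)$ put $\psi^{\epsilon}_s:=V^{n}_{T-s}(\mu)$ for $s\in[0,T-\epsilon]$. Since $T-s\ge\epsilon>0$ there, the smoothing and continuity estimates for $V^{n}$ recorded in Appendix~\ref{app:pde} show that $s\mapsto\psi^{\epsilon}_s$ is continuous into $L^{1}(\R)\cap L^{2}(\R)$, that $\sup_{s\le T-\epsilon}\lVert\partial_s\psi^{\epsilon}_s\rVert_{\alpha\beta/(\alpha\beta-1)}<\infty$, and that $s\mapsto\partial^2_{xx}\psi^{\epsilon}_s$ is continuous into $L^{\infty}(\R)$, so $\psi^{\epsilon}$ satisfies hypotheses (i)--(iii) of Proposition~\ref{prop:time_dep_mart_prob}. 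That proposition then gives that $\tilde M^{Y}(\psi^{\epsilon})$ is a true $\mc F^{Y}_t$-martingale on $[0,T-\epsilon]$, and equating its (constant, zero) mean at the deterministic times $T-t$ and $T-\epsilon$ yields
\[
\E_Y\!\left[e^{-\langle Y_{T-\epsilon},\,V^{n}_{\epsilon}(\mu)\rangle}\right]-\E_Y\!\left[e^{-\langle Y_{T-t},\,V^{n}_{t}(\mu)\rangle}\right]=\E_Y\!\left[\int_{T-t}^{T-\epsilon}\tilde I\big(Y_{s-},\psi^{\epsilon}_s\big)\,ds\right].
\]

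To identify the right-hand side, differentiate \eqref{eq:pde_Z} in $t$ (using $\partial_tP_t=\tfrac12\Delta P_t$): this gives $\partial_rV^{n}_r(\mu)=\tfrac12\Delta V^{n}_r(\mu)-b_n(V^{n}_r(\mu))^{\alpha}$, hence $\partial_s\psi^{\epsilon}_s=-\tfrac12\Delta\psi^{\epsilon}_s+b_n(\psi^{\epsilon}_s)^{\alpha}$, i.e.\ $\tfrac12\partial^2_{xx}\psi^{\epsilon}_s+\partial_s\psi^{\epsilon}_s=b_n(\psi^{\epsilon}_s)^{\alpha}$. Substituting this into \eqref{eq:tilde_I}, the two Laplacian contributions in $\tilde I$ cancel, leaving an expression built only from $\langle Y_{s-}^{\alpha\beta},(V^{n}_{T-s}(\mu))^{\alpha}\rangle$ and $\langle Y_{s-},b_n(V^{n}_{T-s}(\mu))^{\alpha}\rangle$; after the change of variables $r=T-s$ and regrouping, the displayed identity becomes \eqref{eq:(3.5)_m98} but with the time integral over $r\in[\epsilon,t]$ instead of $[0,t]$.

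It remains to send $\epsilon\downarrow0$. On the left, $V^{n}_{\epsilon}(\mu)\to\mu=V^{n}_0(\mu)$ weakly and $Y_{T-\epsilon}\to Y_{T-}=Y_T$ almost surely (a fixed time is a.s.\ a continuity point of the cadlag $M_F$-valued process $Y$), so bounded convergence yields $\E_Y[e^{-\langle Y_{T-\epsilon},V^{n}_{\epsilon}(\mu)\rangle}]\to\E_Y[e^{-\langle Y_T,V^{n}_0(\mu)\rangle}]$. For the integral term I invoke dominated convergence, simultaneously justifying the exchange of $\E_Y$ and $\int\,dr$ by Fubini: bounding $e^{-\langle\cdot,\cdot\rangle}\le1$ and using Proposition~\ref{prop:moment_est_Y_2} with $q=\alpha\beta$ (legitimate because $1<\alpha\beta<\alpha$) together with $\E_Y Y_{(T-r)-}(x)\le\lVert\varphi\rVert_\infty$ to estimate $\E_Y\langle Y_{(T-r)-}^{\alpha\beta},(V^{n}_r(\mu))^{\alpha}\rangle$ and $\E_Y\langle Y_{(T-r)-},b_n(V^{n}_r(\mu))^{\alpha}\rangle$, one dominates the integrand by $C(T,\varphi)(1+b_n)\lVert V^{n}_r(\mu)\rVert_{\alpha}^{\alpha}$; and $\int_0^t\lVert V^{n}_r(\mu)\rVert_{\alpha}^{\alpha}\,dr<\infty$ for every $\mu\in M_F$ by Appendix~\ref{app:pde} (this finiteness is precisely what makes the time change $\gamma^{n}$ well defined). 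Passing to the limit in both sides of the displayed identity then yields \eqref{eq:(3.5)_m98}.

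The step I expect to be the genuine obstacle is hidden in this last limit. For a general, possibly atomic, $\mu\in M_F$ the backward test function $\psi^{\epsilon}_s$ ceases to be admissible in Proposition~\ref{prop:time_dep_mart_prob} as $s\uparrow T$, because $V^{n}_r(\mu)\to\mu$ only weakly and $V^{n}_r(\mu)$ blows up in $L^{\infty}$ and in $L^{\alpha}$ as $r\downarrow0$ near the atoms of $\mu$; this is what forces the detour through $[0,T-\epsilon]$, and its rigorous execution needs the sharp near-zero integrability of $r\mapsto\lVert V^{n}_r(\mu)\rVert_{\alpha}^{\alpha}$ from Appendix~\ref{app:pde} together with the $q$-th moment estimate for $Y$ with $q$ arbitrarily close to $\alpha$ from Proposition~\ref{prop:moment_est_Y_2}. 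Everything else — the time reversal, the cancellation of the Laplacian via \eqref{eq:pde_Z}, and the change of variables — is routine.
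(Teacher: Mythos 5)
Your core mechanism is the same as the paper's: use $V^n_{T-s}(\mu)$ as a time--dependent test function in Proposition~\ref{prop:time_dep_mart_prob}, cancel the Laplacian against $\partial_s\psi_s$ via the PDE \eqref{eq:pde_Z}, and reverse time. Where you diverge is in how the roughness of a general $\mu\in M_F$ is tamed. The paper mollifies in \emph{space}: it takes $\varphi_l\in\mc{S}(\R)_+$ with $\mu_l(dx)=\varphi_l(x)\,dx\Rightarrow\mu$, proves the identity for each $\mu_l$ on the whole interval $[0,t]$ (Lemma~\ref{lem:pde_Z_properties} makes $V^n_{T-s}(\mu_l)$ admissible for smooth data, with no singularity at $r=0$), and then sends $l\to\infty$, controlling the integral terms by H\"older, Proposition~\ref{prop:moment_est_Y_2}, the $\bfL^{\alpha,T}$-convergence $v_l\to v$ from Fleischmann, and the monotone decrease of $v_l$ from \cite[Lemma 2.6(a)]{mytnik02}. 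You instead truncate in \emph{time} and send $\epsilon\downarrow 0$. That route has two genuine gaps as written.

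First, the admissibility of $\psi^\epsilon_s=V^n_{T-s}(\mu)$ on $[0,T-\epsilon]$ for a general finite measure $\mu$ is asserted but not available from the paper: Lemma~\ref{lem:pde_Z_properties} is proved only for $\varphi\in\mc{S}(\R)_+$, and its proof runs through \cite[Theorem A]{iscoe86} and the representation \eqref{eq:pde_Z_derivative} with $\tilde\varphi=\frac12\varphi''-\frac{b_n}{2}\varphi^\alpha$, which has no meaning for measure-valued initial data. Verifying hypotheses (i)--(iii) of Proposition~\ref{prop:time_dep_mart_prob} for the flow started from a measure, uniformly on $[\epsilon,T]$, requires interior parabolic smoothing estimates for the nonlinear equation that are nowhere established (and the ``near-zero integrability of $r\mapsto\lVert V^n_r(\mu)\rVert_\alpha^\alpha$'' you invoke is likewise not in Appendix~\ref{app:pde}; it is imported implicitly from \cite{mytnik02} via the definition of $\gamma^n$). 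Second, and more seriously, your dominated-convergence argument handles only the time-integral term; the boundary term $\E_Y[e^{-\langle Y_{T-\epsilon},V^n_\epsilon(\mu)\rangle}]$ does not succumb to bounded convergence, because the a.s.\ convergence of the exponent is exactly what is in doubt: $V^n_\epsilon(\mu)\to\mu$ only weakly as measures while $Y_{T-\epsilon}\to Y_{T}$ weakly, and the pairing of two weakly convergent sequences need not converge (if $\mu$ charges a point where $Y_T$ concentrates mass, $\langle Y_{T-\epsilon},V^n_\epsilon(\mu)\rangle$ can fail to converge to any sensible $\langle Y_T,\mu\rangle$). This is not a pathological worry: in the application (Proposition~\ref{prop:(3.19)_m98}) the relevant $\mu$ is $Z^{(n)}_{\gamma_{l+1}}$, which contains the atom $S_{l+1}\delta_{U_{l+1}}$. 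The paper's spatial mollification sidesteps this by fixing the time variable and exploiting the monotone pointwise convergence $v_l\downarrow v$ for $r>0$, so you would either need to adopt that device or supply a separate argument for the joint limit of the boundary pairing.
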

	
	In the next proposition we describe the relationship between $Y$ and the jumps of $Z^{(n)}$. Define 
	\begin{align}\label{eq:tau^n}
		\tau^n(t) : = \int_0^t \lVert Z^{(n)}_{r-}\rVert^{\alpha}_{\alpha}\,d r		
	\end{align} 
	and observe from \eqref{eq:gamma^Z,n} that $\tau^n$ is the inverse of $\gamma^n$: $\tau^n(\gamma^n(t)) = t$ and vice-versa.
	\begin{prop}\label{prop:(3.19)_m98}
		If $Y$ is a solution to the martingale problem (\ref{eq:main_mart_problem}), independent of $Z^{(n)}$'s, then for all $t \in [0, T]$,
		\begin{align}\label{eq:(3.19)_m98}
			& \E_Y \left[e^{-\langle Y_{T-t}, Z^{(n)}_t\rangle} \right]
			\nonumber \\
			= &   \E_Y\left[e^{-\langle Y_T, Z_0 \rangle} + \int_{0}^{t}\tilde{\mc{I}}(Y_{(T-r)-}, Z^{(n)}_{r-} ) \,d r 
			+	\int_0^{\tau^n(t)} \int_{\R} \int_{\R_+} \theta_n(s,x, \lambda) \mc{N}^n(d \lambda, \, d x, \, d s)	\right] 			
		\end{align}		
		where
		\begin{align*}
			\theta_n(s,x, \lambda) = e^{-\langle Y_{T-\gamma^n(s)} , Z^{(n)}_{\gamma^n(s) -}\rangle} \left( e^{- \lambda Y_{T-\gamma^n(s)}(x) } -1 \right).
		\end{align*}
	\end{prop}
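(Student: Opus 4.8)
The plan is to derive \eqref{eq:(3.19)_m98} from Proposition~\ref{prop:lem3.1_m98} by telescoping along the successive jumps of $Z^{(n)}$. As explained before Proposition~\ref{prop:lem3.1_m98}, under $\E_Y$ every object produced by the construction of $Z^{(n)}$ in Section~\ref{sec:prelim} is non-random; in particular so are the real jump times $\sigma_k:=\gamma^n(T^n_k)$ (for $k\ge1$, with $\sigma_0:=0$), the heights $S^n_k$, the locations $U^n_k$, and hence $\tau^n$ and $N^n$. Fix $t\in[0,T]$ and put $K=K(t):=\#\{k\ge1:\sigma_k\le t\}$; since $\sigma_k\le t$ iff $T^n_k\le\tau^n(t)$ and the $T^n_k$ form a renewal sequence, $K$ is finite on $\{\tau^n(t)<\infty\}$. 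Set $\sigma_{K+1}:=t$; as $\sigma_K\le t<\gamma^n(T^n_{K+1})$, there is no jump of $Z^{(n)}$ at $t$. On each interval $[\sigma_k,\sigma_{k+1})$ the process $Z^{(n)}$ solves the PDE \eqref{eq:pde_Z} started from $\mu_k:=Z^{(n)}_{\sigma_k}\in M_F$, so there $Z^{(n)}_r=V^n_{r-\sigma_k}(\mu_k)$ and $Z^{(n)}_{\sigma_{k+1}-}=V^n_{\sigma_{k+1}-\sigma_k}(\mu_k)$.

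Telescoping, and separating the increment over each $[\sigma_k,\sigma_{k+1}]$ into an evolution part and the jump at $\sigma_k$, gives
\begin{align*}
e^{-\langle Y_{T-t},\,Z^{(n)}_{t}\rangle}-e^{-\langle Y_{T},\,Z^{(n)}_{0}\rangle}
&=\sum_{k=0}^{K}\Big(e^{-\langle Y_{T-\sigma_{k+1}},\,Z^{(n)}_{\sigma_{k+1}-}\rangle}-e^{-\langle Y_{T-\sigma_{k}},\,Z^{(n)}_{\sigma_{k}}\rangle}\Big)\\
&\qquad+\sum_{k=1}^{K}\Big(e^{-\langle Y_{T-\sigma_{k}},\,Z^{(n)}_{\sigma_{k}}\rangle}-e^{-\langle Y_{T-\sigma_{k}},\,Z^{(n)}_{\sigma_{k}-}\rangle}\Big),
\end{align*}
the cancellation using $Z^{(n)}_{t-}=Z^{(n)}_{t}$. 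To the $k$-th term of the first sum I apply Proposition~\ref{prop:lem3.1_m98} with $T$ replaced by $T-\sigma_k$ (legitimate since $0\le\sigma_k\le\sigma_{k+1}\le T$ and $\sigma_k$ is non-random under $\E_Y$), with time parameter $\sigma_{k+1}-\sigma_k$, and with $\mu=\mu_k$. Because $V^n_0(\mu_k)=Z^{(n)}_{\sigma_k}$, $V^n_{\sigma_{k+1}-\sigma_k}(\mu_k)=Z^{(n)}_{\sigma_{k+1}-}$, and $V^n_{r-\sigma_k}(\mu_k)=Z^{(n)}_{r-}$ for Lebesgue-a.e.\ $r\in(\sigma_k,\sigma_{k+1})$, summing the resulting identities over $k=0,\dots,K$ and using $\sum_k\int_{\sigma_k}^{\sigma_{k+1}}=\int_0^t$ shows that the $\E_Y$-expectation of the first sum equals $\E_Y\bigl[\int_0^t\tilde{\mc{I}}(Y_{(T-r)-},Z^{(n)}_{r-})\,dr\bigr]$.

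For the second sum, at $\sigma_k=\gamma^n(T^n_k)$ the measure $Z^{(n)}$ jumps by $S^n_k\delta_{U^n_k}$, so its $k$-th term is
\begin{align*}
e^{-\langle Y_{T-\sigma_k},\,Z^{(n)}_{\sigma_k}\rangle}-e^{-\langle Y_{T-\sigma_k},\,Z^{(n)}_{\sigma_k-}\rangle}
&=e^{-\langle Y_{T-\sigma_k},\,Z^{(n)}_{\sigma_k-}\rangle}\bigl(e^{-S^n_k\,Y_{T-\sigma_k}(U^n_k)}-1\bigr)\\
&=\theta_n(T^n_k,U^n_k,S^n_k).
\end{align*}
By the construction of Section~\ref{sec:prelim} and Lemma~\ref{lem:lem3.2_m98}, $N^n=\sum_{j\ge1}\delta_{(T^n_j,\,U^n_j,\,S^n_j)}$ is the jump-counting measure of $Z^{(n)}$ written in the time scale in which jumps occur at the $T^n_j$; since $\{1\le j\le K\}=\{j:T^n_j\le\tau^n(t)\}$, the second sum equals $\int_0^{\tau^n(t)}\int_{\R}\int_{\R_+}\theta_n(s,x,\lambda)\,N^n(ds,dx,d\lambda)$. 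Combining this with the previous paragraph and recalling $Z^{(n)}_0=Z_0$ yields \eqref{eq:(3.19)_m98}.

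The main obstacle is not this bookkeeping but the possible degeneracy of the time change: for fixed $n$ the lifetime $T^*_n$ of \eqref{eq:def_T^Z,n,*} may be finite, so $\tau^n(t)$, the number $K(t)$ of jumps in $[0,t]$, and the integral $\int_0^t\tilde{\mc{I}}(Y_{(T-r)-},Z^{(n)}_{r-})\,dr$ need not be finite for a given fixed $t$. I would remedy this by carrying out the telescoping only up to the localising times $\gamma^{Z,n}(m)$ of \eqref{eq:gamma^Z,n} --- up to which there are only finitely many jumps --- using Proposition~\ref{prop:moment_est_Y_2} for the moments of $Y$ and the bounds on $V^n$ from Appendix~\ref{app:pde} for the $Z^{(n)}$-factors to check that $\int_0^{\gamma^{Z,n}(m)}\tilde{\mc{I}}$ and the jump sum up to $\gamma^{Z,n}(m)$ are integrable, so that Fubini, the reindexing of the jumps, and the interchange of $\E_Y$ with the (finite) sums are justified there; then one passes to the limit $m\to\infty$, equivalently argues on $\{\tau^n(t)<\infty\}$. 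The remaining points --- the treatment of left limits at the $\sigma_k$ and of the null event on which $t$ is a jump time --- are routine.
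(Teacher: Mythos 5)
Your proposal is correct and follows essentially the same route as the paper: decompose $[0,t]$ by the jump times $\sigma_k=\gamma^n(T^n_k)$, apply Proposition~\ref{prop:lem3.1_m98} (with $T-\sigma_k$, $\sigma_{k+1}-\sigma_k$, $\mu_k=Z^{(n)}_{\sigma_k}$) on each inter-jump interval, identify the jump increments with $\theta_n(T^n_k,U^n_k,S^n_k)$, and resum into the $N^n$-integral. The paper phrases the telescoping as an induction on the number of jumps and passes to the left limit $s\uparrow\gamma_{l+1}$ where you apply Proposition~\ref{prop:lem3.1_m98} directly up to the endpoint $\sigma_{k+1}-\sigma_k$, and it defers the localisation concern you raise to Proposition~\ref{prop:lem3.3_m98} via $\Upsilon^n_m(t)=\gamma^n(m)\wedge t$; these are cosmetic differences.
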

	
	In the last proposition before we prove our main result we show that the previous result holds at the stopping time $\Upsilon^n_{k}(t) := \gamma^n(k)\wedge t$. Recall the definitions of $\eta$ and $g$ from Lemma \ref{lem:lem3.7_myt02}.
	
	\begin{prop}\label{prop:lem3.3_m98}
		If $Y$ is a solution to the martingale problem (\ref{eq:main_mart_problem}), independent of $Z^{(n)}$'s, then for each $m \in \N$ and $t \in [0,T]$,
		\begin{align}\label{eq:(3.18)_m98}
			\E [\exp(-\langle Y_{T-\Upsilon^n_m(t)}, Z_{\Upsilon^n_m(t)}\rangle)]
			= &  \E [\exp\left(-\langle Y_T, Z_0 \rangle\right)]
			\nonumber \\						
			& - \eta  \E \left[ \int^{\Upsilon^n_m(t)}_{0} e^{-\langle Y_{(T-r)-}, Z^{(n)}_{r-}\rangle} \langle g (1/n, Y_{(T-r)-}(\cdot)), (Z^{(n)}_{r-})^{\alpha} \rangle  \,d r\right].			
		\end{align}		
	\end{prop}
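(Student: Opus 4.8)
The plan is to derive \eqref{eq:(3.18)_m98} from Proposition \ref{prop:(3.19)_m98} by evaluating the latter at the stopping time $\Upsilon^n_m(t)=\gamma^n(m)\wedge t$, taking the full expectation $\E$, replacing the Poisson measure $N^n$ by its compensator from Lemma \ref{lem:lem3.2_m98}, and then carrying out an elementary time-change together with an elementary $\lambda$-integral. The starting observation is that $\tau^n(\Upsilon^n_m(t))=\tau^n(t)\wedge m\le m\wedge T^*_n$: since $\tau^n\circ\gamma^n$ is the identity up to $T^*_n$ and $\gamma^n\equiv\infty$ on $[T^*_n,\infty)$ forces $\int_0^\infty\lVert Z^{(n)}_{r-}\rVert_\alpha^\alpha\,dr<\infty$, we get $\tau^n(t)<T^*_n$ for all finite $t$. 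Consequently the time-change $s\mapsto\gamma^n(s)$ is well defined and strictly increasing on the whole integration range appearing in the Poisson term of \eqref{eq:(3.19)_m98}, with inverse $\tau^n$, and the factor $\ind(s\le T^*_n)$ in Lemma \ref{lem:lem3.2_m98} is identically $1$ there. That \eqref{eq:(3.19)_m98} may be used at $\Upsilon^n_m(t)$ rather than at a fixed time should follow from an optional-stopping argument applied to the martingales behind Proposition \ref{prop:(3.19)_m98}, using the boundedness $\tau^n(\Upsilon^n_m(t))\le m$.

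Next I would split $N^n=\tilde N^n+\hat N^n$ in the Poisson term. The compensated part is stopped at the bounded time $\tau^n(\Upsilon^n_m(t))\le m$, and since $|\theta_n(s,x,\lambda)|\le\min\!\big(1,\lambda\,Y_{T-\gamma^n(s)}(x)\big)$, $\int_{1/n}^\infty\lambda^{-\alpha\beta-1}\,d\lambda<\infty$ (this is where $\alpha\beta>1$ enters), and $\sup_x\E\,Y_r(x)^q\le Ce^{Cr}$ by Proposition \ref{prop:moment_est_Y_2}, the integrand is integrable against $\hat N^n$, so the compensated part has zero expectation. Into the remaining term I would insert the density of $\hat N^n$ from Lemma \ref{lem:lem3.2_m98} and substitute $s=\tau^n(r)$, $ds=\tfrac12\lVert Z^{(n)}_{r-}\rVert_\alpha^\alpha\,dr$, $\gamma^n(s)=r$; the $\lVert Z^{(n)}_{r-}\rVert_\alpha^\alpha$ factors cancel, and together with the $\E[e^{-\langle Y_T,Z_0\rangle}]$ and $\E\big[\int_0^{\Upsilon^n_m(t)}\tilde{\mc I}(Y_{(T-r)-},Z^{(n)}_{r-})\,dr\big]$ terms inherited from \eqref{eq:(3.19)_m98} this yields
\[
\E\big[e^{-\langle Y_{T-\Upsilon^n_m(t)},Z^{(n)}_{\Upsilon^n_m(t)}\rangle}\big]=\E\big[e^{-\langle Y_T,Z_0\rangle}\big]+\E\!\left[\int_0^{\Upsilon^n_m(t)}\!\!\tilde{\mc I}(Y_{(T-r)-},Z^{(n)}_{r-})\,dr\right]+\frac{\eta}{2}\,\E[J],
\]
where $J=\int_0^{\Upsilon^n_m(t)}e^{-\langle Y_{T-r},Z^{(n)}_{r-}\rangle}\int_\R(Z^{(n)}_{r-}(x))^\alpha\big(\int_{1/n}^\infty(e^{-\lambda Y_{T-r}(x)}-1)\lambda^{-\alpha\beta-1}\,d\lambda\big)\,dx\,dr$.

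The key step is the inner $\lambda$-integral: writing $e^{-\lambda y}-1=(e^{-\lambda y}-1+\lambda y)-\lambda y$ and using $\int_0^\infty(e^{-\lambda y}-1+\lambda y)\lambda^{-\alpha\beta-1}\,d\lambda=\frac{\Gamma(2-\alpha\beta)}{\alpha\beta(\alpha\beta-1)}y^{\alpha\beta}=\eta^{-1}y^{\alpha\beta}$ and $\int_{1/n}^\infty\lambda^{-\alpha\beta}\,d\lambda=\frac{n^{\alpha\beta-1}}{\alpha\beta-1}=\eta^{-1}b_n$, one finds
\[
\int_{1/n}^\infty(e^{-\lambda y}-1)\lambda^{-\alpha\beta-1}\,d\lambda=\eta^{-1}y^{\alpha\beta}-g(1/n,y)-\eta^{-1}b_n\,y.
\]
Substituting this into $J$ and recalling $\tilde{\mc I}(Y_{(T-r)-},Z^{(n)}_{r-})=\tfrac12 e^{-\langle Y_{(T-r)-},Z^{(n)}_{r-}\rangle}\big(-\langle Y^{\alpha\beta}_{(T-r)-},(Z^{(n)}_{r-})^\alpha\rangle+b_n\langle Y_{(T-r)-},(Z^{(n)}_{r-})^\alpha\rangle\big)$, the identity $Y_{T-r}=Y_{(T-r)-}$ for Lebesgue-a.e.\ $r$ (right-continuity of $Y$) makes the $\langle Y^{\alpha\beta},(Z^{(n)}_{r-})^\alpha\rangle$ and $b_n\langle Y,(Z^{(n)}_{r-})^\alpha\rangle$ contributions cancel, leaving exactly $-\tfrac{\eta}{2}\E\big[\int_0^{\Upsilon^n_m(t)}e^{-\langle Y_{(T-r)-},Z^{(n)}_{r-}\rangle}\langle g(1/n,Y_{(T-r)-}(\cdot)),(Z^{(n)}_{r-})^\alpha\rangle\,dr\big]$, which together with the $\E[e^{-\langle Y_T,Z_0\rangle}]$ term is precisely \eqref{eq:(3.18)_m98}.

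I expect the main obstacle to be the rigorous justification that the compensated Poisson integral, stopped at $\tau^n(\Upsilon^n_m(t))$, is a genuine martingale with zero mean — equivalently, that Proposition \ref{prop:(3.19)_m98} may legitimately be evaluated at the random time $\Upsilon^n_m(t)$ instead of a deterministic one. This is exactly where the stopping at $\gamma^n(m)$ (hence $\tau^n(\Upsilon^n_m(t))\le m\wedge T^*_n$), the hypothesis $\alpha\beta>1$ ensuring $\int_{1/n}^\infty\lambda^{-\alpha\beta-1}\,d\lambda<\infty$, and the moment bound of Proposition \ref{prop:moment_est_Y_2} have to be combined carefully to control $\E\int_0^{\tau^n(\Upsilon^n_m(t))}\!\int_\R\!\int_{\R_+}|\theta_n|\,\hat N^n$ uniformly. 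Everything after that reduces to the two-time-scale bookkeeping and the elementary integral identities recorded above.
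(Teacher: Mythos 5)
Your proposal is correct and essentially mirrors the paper's argument: both start from Proposition \ref{prop:(3.19)_m98}, split the Poisson term into the compensated martingale $M$ of \eqref{eq:(3.20)_m98_mart} plus the compensator $\hat N^n$, time-change between $\tau^n$ and $\gamma^n$ to turn the $\hat N^n$-integral into a Lebesgue integral in the $r$-variable, apply the identity $y^{\alpha\beta}=\eta g(1/n,y)+\eta\int_{1/n}^\infty(e^{-\lambda y}-1)\lambda^{-\alpha\beta-1}\,d\lambda+b_n y$ to cancel against the $\tilde{\mc I}$ term, and kill the compensated part by optional stopping at the bounded time $\tau^n(\Upsilon^n_m(t))\le m$ (the paper does the rewriting first and localizes last, you localize first and rewrite after, but the computations are the same). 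One small correction: $\alpha\beta>1$ is not what makes $\int_{1/n}^\infty\lambda^{-\alpha\beta-1}\,d\lambda$ finite (that holds for any $\alpha\beta>0$); it is instead needed for $\int_{1/n}^\infty\lambda^{-\alpha\beta}\,d\lambda<\infty$ (hence for the $b_n$-term in the decomposition) and for $\eta>0$. Also, no separate optional-stopping argument is needed to invoke \eqref{eq:(3.19)_m98} at $\Upsilon^n_m(t)$: since $\Upsilon^n_m(t)$ is $\sigma(Z^{(n)})$-measurable and \eqref{eq:(3.19)_m98} is an $\E_Y$-identity holding for every fixed $Z$-path and every $t\in[0,T]$, substituting $\Upsilon^n_m(t)$ is automatic; the bounded-stopping-time argument is needed only for $\E_Z[M_{\tau^n(t)\wedge m}]=0$.
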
  
	We note that Propositions \ref{prop:lem3.1_m98} and \ref{prop:(3.19)_m98} are used to prove Proposition \ref{prop:lem3.3_m98}. Now we present the proof of Theorem \ref{thm:required_approx_duality_one_sided} assuming that the above propositions hold. We will prove them in the next section.
	
	\begin{proof}[Proof of Theorem \ref{thm:required_approx_duality_one_sided}]
		Let $Y$ and $Z^{(n)}$ be as in the statement of Theorem \ref{thm:required_approx_duality_one_sided}. Let  
		\begin{align*}
			k_n = \ln n.
		\end{align*} We will show that for a.e. $t \in [0,T]$, 
		\begin{align}\label{eq:thm_prf_0}
			\lim_{n\to \infty} \lvert \E [e^{-\langle Y_0, Z^{(n)}_{\Upsilon^n_{k_n}(t)}\rangle}] - \E[e^{-\langle Y_t, Z_0\rangle}]\rvert =0.
		\end{align}
		This will prove the theorem with the approximate dual processes being $\tilde{Z}^{(n)}_t := Z^{(n)}_{\Upsilon^n_{k_n}(t)}$.
		
		Towards this, we are first going to show that
		\begin{align}\label{eq:thm_prf_0.5}
			\left|\E \exp\left(-\langle Y_{T-\Upsilon^n_{k_n}(t)}, Z^{(n)}_{\Upsilon^n_{k_n}(t)}\rangle\right) - \E e^{-\langle Y_T, Z_0 \rangle}\right|		
			\le  C_{\alpha. \beta, T} ((T-t)^{-\frac{\bar{p}}{2}} +1) n^{- \frac{\alpha - \alpha \beta}{2}} k_n,
		\end{align}
		when $0\le t < T$. Note that, as $k_n = \ln n$, the RHS converges to $0$ as $n \to \infty$. 		
		
		Note that for all $1< p  < 2$ and $\lambda \ge 0$, $$ e^{-\lambda} -1 + \lambda \le \frac{\lambda^p}{p}.$$ Also by our assumptions on $\alpha$ and $\beta$ we have $1<\frac{\alpha(\beta+1)}{2}<\alpha<2$. So,
		\begin{align}\label{eq:lem3.4_m98_2}
			g\left(\frac{1}{n}, Y_{T-s}(x)\right) & = \int_{0+}^{1/n} \left( e^{-\lambda Y_{T-s}(x)} -1 + \lambda Y_{T-s}(x)\right) \lambda^{-\alpha \beta -1}\,d \lambda
			\nonumber \\	
			& \le \frac{2}{\alpha(\beta +1)} \int_{0+}^{1/n} \left(\lambda Y_{T-s}(x)\right)^{\frac{\alpha(\beta +1)}{2}} \lambda^{-\alpha \beta -1}\,d \lambda
			\nonumber \\	
			& = \frac{2}{\alpha(\beta +1)} Y_{T-s}(x)^{\frac{\alpha(\beta +1)}{2}} \int_{0+}^{1/n} \lambda^{\frac{\alpha(\beta +1)}{2} -\alpha \beta -1}\,d \lambda 
			\nonumber \\
			&= \frac{2}{\alpha(\beta +1)} Y_{T-s}(x)^{\frac{\alpha(\beta +1)}{2}} \frac{2}{\alpha - \alpha \beta} n^{- \frac{\alpha - \alpha \beta}{2} }.
		\end{align}
		
		Eq. \eqref{eq:(3.18)_m98} and the above calculation gives us 
		\begin{align}\label{eq:lem3.4_m98_3}
			& \left|\E \exp\left(-\langle Y_{T-\Upsilon^n_{k_n}(t)}, Z^{(n)}_{\Upsilon^n_{k_n}(t)}\rangle\right) - \E e^{-\langle Y_T, Z_0 \rangle}\right| 
			\nonumber \\ 
			=& \left\lvert  \eta  \E \left[ \int^{\Upsilon^n_{k_n}(t)}_{0+} e^{-\langle Y_{(T-s)-}, Z^{(n)}_s\rangle} \langle g(1/n, Y_{(T-s)-}(\cdot)), \left(Z^{(n)}_{s-}\right)^{\alpha} \rangle  \,d s\right] \right\rvert
			\nonumber  \\	
			= &  \eta  \E \left[ \int_0^{\Upsilon^n_{k_n}(t)} \int_{\R} Z^{(n)}_{s-}(x)^{\alpha} g(1/n, Y_{T-s}(x)) \,d x \,d s \right] 
			\nonumber \\	
			\le & \eta \frac{2}{\alpha(\beta +1)} \frac{2}{\alpha - \alpha \beta} n^{- \frac{\alpha - \alpha \beta}{2}}   \E \left[ \int_0^{\Upsilon^n_{k_n}(t)} \int_{\R} Z^{(n)}_{s-}(x)^{\alpha} Y_{T-s}(x)^{\frac{\alpha(\beta +1)}{2}} \,d x \,d s \right]
		\end{align}
		using the fact that $Z^{(n)}_{s-}(\cdot)\ge 0$ and $Y_{(T-s)-}(\cdot)\ge 0$ for the second equality. Now use the estimate from Proposition \ref{prop:moment_est_Y_2} with $\bar{p} = \frac{\alpha(\beta+1)}{2}$. We have by Fubini's theorem
		\begin{align}	
			\E \left[ \int_0^{\Upsilon^n_{k_n}(t)} \int_{\R} Z^{(n)}_{s-}(x)^{\alpha} Y_{T-s}(x)^{\frac{\alpha(\beta +1)}{2}} \,d x \,d s \right]				
			= &  \E_Z \E_Y \left[ \int_0^{\Upsilon^n_{k_n}(t)} \int_{\R} Z^{(n)}_{s-}(x)^{\alpha} Y_{T-s}(x)^{\bar{p}} \,d x \,d s\right] 
			\nonumber \\
			= &  \E_Z  \left[ \int_0^{\Upsilon^n_{k_n}(t)} \int_{\R} Z^{(n)}_{s-}(x)^{\alpha} \E_Y \left(Y_{T-s}(x)^{\bar{p}}\right) \,d x \,d s\right] 
			\nonumber \\
			\le &  C \E_Z \left[ \int_0^{\Upsilon^n_{k_n}(t)} \int_{\R} Z^{(n)}_{s-}(x)^{\alpha} (T-s)^{-\frac{\bar{p}}{2}} \,d x \,d s\right]  
			\nonumber \\
			& + C \E_Z \left[ \int_0^{\Upsilon^n_{k_n}(t)} \int_{\R} Z^{(n)}_{s-}(x)^{\alpha}  \,d x \,d s\right] 			
			\nonumber \\
			\le &    C \E_Z \left[ (T-\Upsilon^n_{k_n}(t))^{-\frac{\bar{p}}{2}} \int_0^{\Upsilon^n_{k_n}(t)} \lVert Z^{(n)}_{s-}\rVert^{\alpha}_{\alpha}  \,d s\right] 
			\nonumber \\
			& + C \E_Z \left[ \int_0^{\Upsilon^n_{k_n}(t)}  \lVert Z^{(n)}_{s-}\rVert^{\alpha}_{\alpha} \,d s\right]
			\nonumber \\
			\le &  C ((T-t)^{-\frac{\bar{p}}{2}} +1)  \E_Z \left[  \int_0^{\Upsilon^n_{k_n}(t)} \lVert Z^{(n)}_{s-}\rVert^{\alpha}_{\alpha}  \,d s\right] 
			\nonumber \\			
			\le & C ((T-t)^{-\frac{\bar{p}}{2}} +1) k_n .
		\end{align} 
		The third inequality is due to the fact that $\Upsilon^n_k (t) = \gamma^n(k)\wedge t \le t$ and the last inequality follows from the definition of $\gamma^n$ (see \eqref{eq:gamma^Z,n}). Plugging this in \eqref{eq:lem3.4_m98_3} gives \eqref{eq:thm_prf_0.5}.
		
		Next we turn our attention to \eqref{eq:thm_prf_0}. We can write,
		\begin{align} \label{eq:thm_prf_2}
			& \lvert \E\exp(-\langle Y_0, Z^{(n)}_{\Upsilon^n_{k_n}(t)} \rangle) - \E\exp(-\langle Y_t, Z_0\rangle) \rvert
			\nonumber \\
			\le & \lvert \E\exp(-\langle Y_0, Z^{(n)}_{\Upsilon^n_{k_n}(t)} \rangle) - \E\exp(-\langle Y_{t - \Upsilon^n_{k_n}(t-\frac{1}{k_n})}, Z^{(n)}_{\Upsilon^n_{k_n}(t-\frac{1}{k_n})}\rangle) \rvert 			 
			\nonumber \\
			& + \lvert \E\exp(-\langle Y_{t - \Upsilon^n_{k_n}(t-\frac{1}{k_n})}, Z^{(n)}_{\Upsilon^n_{k_n}(t-\frac{1}{k_n})}\rangle)  - \E\exp(-\langle Y_t, Z_0\rangle) \rvert .			
		\end{align}
		By \eqref{eq:thm_prf_0.5} (with $T$ and $t$ replaced by $t$ and $t-\frac{1}{k_n}$ respectively) we can bound the second term in the RHS of the above as follows,
		\begin{align*}
			\left|\E \exp\left(-\langle Y_{t-\Upsilon^n_{k_n}(t-\frac{1}{k_n})}, Z^{(n)}_{\Upsilon^n_{k_n}(t-\frac{1}{k_n})}\rangle\right) - \E e^{-\langle Y_t Z_0 \rangle}\right|		
			\le  C_{\alpha. \beta, t} (k_n^{\frac{\bar{p}}{2}} +1) n^{- \frac{\alpha - \alpha \beta}{2}} k_n.
		\end{align*} 
		We note that the RHS of the above converges to $0$ as $n \to \infty$.
		
		Let us now consider the first term in the RHS of \eqref{eq:thm_prf_2}. By definition of $\Upsilon^n$ and $T^*_n$ (see \eqref{eq:def_T^Z,n,*}) ,
		\begin{align*}
			& \lvert \E\exp(-\langle Y_{t - \Upsilon^n_{k_n}(t-\frac{1}{k_n})}, Z^{(n)}_{\Upsilon^n_{k_n}(t-\frac{1}{k_n})} \rangle) - \E\exp(-\langle Y_0, Z^{(n)}_{\Upsilon^n_{k_n}(t)} \rangle) \rvert
			\\
			= & \lvert \E\left[\exp(-\langle Y_{t - \Upsilon^n_{k_n}(t-\frac{1}{k_n})}, Z^{(n)}_{\Upsilon^n_{k_n}(t-\frac{1}{k_n})} \rangle) - \exp(-\langle Y_0, Z^{(n)}_{\Upsilon^n_{k_n}(t)} \rangle); \Upsilon^n_{k_n}(t) < t-\frac{1}{k_n}\right] \rvert
			\\
			& + \lvert \E\left[\exp(-\langle Y_{t - \Upsilon^n_{k_n}(t-\frac{1}{k_n})}, Z^{(n)}_{\Upsilon^n_{k_n}(t-\frac{1}{k_n})} \rangle) - \exp(-\langle Y_0, Z^{(n)}_{\Upsilon^n_{k_n}(t)} \rangle); \Upsilon^n_{k_n}(t) \ge t-\frac{1}{k_n}\right] \rvert
			\\
			\le & \pr(\Upsilon^n_{k_n}(t) < t -\frac{1}{k_n}) + \lvert \E\left[\exp(-\langle Y_{\frac{1}{k_n}}, Z^{(n)}_{t-\frac{1}{k_n}} \rangle) - \exp(-\langle Y_0, Z^{(n)}_{\Upsilon^n_{k_n}(t)} \rangle); \Upsilon^n_{k_n}(t) \ge t-\frac{1}{k_n}\right] \rvert .
		\end{align*}
		The second term above converges to $0$ since $Y$ is right-continuous and $\Upsilon^n_{k_n}(t) = \gamma^n(k_n) \wedge t \to t$ as $n \to \infty$. Also as $\pr(T^*_n < \infty)=1$ (see \cite[eq. (3.14)]{mytnik02}), we have 
		\begin{align*}
			\pr(\Upsilon^n_{k_n}(t) < t -\frac{1}{k_n})  = \pr(\gamma^n(k_n) < t-\frac{1}{k_n})  \le \pr (T^*_n > k_n) \to 0\text{ as } n \to \infty.
		\end{align*} This proves \eqref{eq:thm_prf_0}.				 
	\end{proof}
	
	\section{Proofs of key Propositions} \label{sec:proof_props}	
	
	We will prove the three propositions required for the proof of Theorem \ref{thm:required_approx_duality_one_sided} in this section. For Proposition \ref{prop:lem3.1_m98} we start by verifying \eqref{eq:(3.5)_m98} for measures having densities and then prove it for the case of general measures.
	
	\begin{proof}[Proof of Proposition \ref{prop:lem3.1_m98}]
		Let $\varphi_l \in \mc{S}(\R)_+$, $l \in \N$, be such that $\mu_l(d x) := \varphi_l(x)\,d x \implies \mu(dx)$ as $l \to \infty$. Since $n$ is fixed in this proof, let $v_l(\cdot) = V^n_{\cdot}(\mu_l)$ solve 
		\begin{align}\label{eq:lem3.1_m98_1}
			\partial_t v_l(t) & = \frac{1}{2}\partial^2_{xx} v_l(t) -  b_n v_l(t)^{\alpha} \nonumber\\
			v_l(0) & = \varphi_l.
		\end{align}
		Fix $l, k \in \N$. Let $\psi(s,x) := v_l(T-s, x) = V^n_{T-s}(\mu_l)(x)$. Lemma \ref{lem:pde_Z_properties} says that $\psi$ satisfies the conditions of Proposition \ref{prop:time_dep_mart_prob}.  	
		
		From \eqref{eq:tilde_I} recall that
		\begin{align*}
			\tilde{I}(Y_{s-}, \psi_s) =  e^{-\langle Y_{s-}, \psi_s \rangle}  \left[ -\langle Y_{s-}, \frac{1}{2}\Delta\psi_s \rangle + \langle Y_{s-}^{\alpha \beta},\psi_s^{\alpha} \rangle - \langle Y_{s-}, \frac{\partial}{\partial s}\psi_s\rangle  \right].
		\end{align*}	
		Then by Proposition \ref{prop:time_dep_mart_prob} for each $k \in \N$ and $t \in [0,T)$, 	
		\begin{align*}
			\E_Y \left[\tilde{M}^Y_{T-t}(\psi)\right] = \E_Y \left[\tilde{M}^Y_{T}(\psi)\right]
		\end{align*}
		which implies,		
		\begin{align}\label{eq:lem3.1_m98_2}
			\E_Y \exp\left(-\langle Y_{T-t},\psi_{T-t} \rangle \right)   
			= \E_Y \left[e^{-\langle Y_{T},\psi_{T} \rangle} - \int_{T-t}^{T} \tilde{I}(Y_{s-}, \psi_s)\, ds\right].
		\end{align}
		
		From the above definition of $\tilde{I}(Y,\psi)$ and \eqref{eq:lem3.1_m98_1} we get, 
		\begin{align}\label{eq:lem3.1_m98_3}
			& \int_{T-t}^{T} \tilde{I}(Y_{s-}, \psi_s) \, d s 
			\nonumber \\
			& =  \int_{T-t}^{T} e^{-\langle Y_{s-}, \psi_s \rangle}  \left\{ -\langle Y_{s-}, \frac{1}{2}\Delta\psi_s \rangle + \langle Y_{s-}^{\alpha \beta},\psi_s^{\alpha} \rangle - \langle Y_{s-}, \frac{\partial}{\partial s}\psi_s\rangle  \right\} \,d s
			\nonumber \\
			& = \int^{T}_{T-t} e^{-\langle Y_{s-}, v_l(T-s) \rangle}  \left\{ -\langle Y_{s-}, \frac{1}{2}\Delta v_l(T-s) \rangle + \langle Y_{s-}^{\alpha \beta},v_l(T-s)^{\alpha} \rangle - \langle Y_{s-}, \frac{\partial}{\partial s}v_l(T-s)\rangle  \right\} \,d s 
			\nonumber \\
			& =  - \int^{0}_{t} e^{-\langle Y_{(T-r)-}, v_l(r) \rangle}  \left\{ -\langle Y_{(T-r)-}, \frac{1}{2}\Delta v_l(r) \rangle + \langle Y_{(T-r)-}^{\alpha \beta},v_l(r)^{\alpha} \rangle + \langle Y_{(T-r)-}, \frac{\partial}{\partial r}v_l(r)\rangle  \right\} \,d r, 
			\nonumber \\
			& = -\int_{t}^{0} e^{-\langle Y_{(T-r)-},v_l(r) \rangle}  \left\{ \langle Y_{(T-r)-}^{\alpha \beta},v_l(r)^{\alpha} \rangle + \langle Y_{(T-r)-}, \frac{\partial}{\partial r}v_l(r) - \frac{1}{2}\Delta v_l(r)\rangle  \right\} \,d r 
			\nonumber \\
			& = -\int_{t}^{0} e^{-\langle Y_{(T-r)-}, v_l(r) \rangle}  \left\{ \langle Y_{(T-r)-}^{\alpha \beta},v_l(r)^{\alpha} \rangle - \langle Y_{(T-r)-},  b_n  v_l(r)^{\alpha} \rangle  \right\} \,d r,  
		\end{align}
		using the substitution $r = T-s$ for the third equality.
		
		By \eqref{eq:lem3.1_m98_2} and \eqref{eq:lem3.1_m98_3}, 
		\begin{align}\label{eq:lem3.1_m98_4}
			& \E_Y \exp\left(-\langle Y_{T-t}, v_l(t) \rangle \right)  =  \E_Y \exp\left(-\langle Y_{T}, v_l(0) \rangle \right) 		
			+ \E_Y \int_{0}^{t} \tilde{\mc{I}}(Y_{(T-r)-},v_l(r)) \, dr   
		\end{align}
		where $$\tilde{\mc{I}}(Y_{(T-r)-}, v_l(r)) = e^{-\langle Y_{(T-r)-}, v_l(r) \rangle} \frac{1}{2} \left( \langle Y_{(T-r)-}, b_n \left(v_l(r)\right)^{\alpha}\rangle - \langle Y^{\alpha\beta}_{(T-r)-}, \left(v_l(r)\right)^{\alpha} \rangle \right) $$ 
		
		We now have to check whether this holds when $\mu: = w-\lim_{l \to \infty} \mu_l$. 
		
		Let $v(r) = V^n_r(\mu)$,
		\begin{align*}
			R_l:=  \E_Y \int^{t}_{0}\tilde{\mc{I}}(Y_{(T-r)-}, V^n_r(\mu_l)) \,d r
			= \E_Y\int_{0}^{t}\tilde{\mc{I}}(Y_{(T-r)-}, v_l(r))) \,d r,
		\end{align*} 
		and
		\begin{align*}
			R := \E_Y \int^{t}_{0}\tilde{\mc{I}}(Y_{(T-r)-}, V^n_r(\mu)) \,d r
			=  \E_Y\int^{t}_{0}\tilde{\mc{I}}(Y_{(T-r)-},v(r)) \,d r.
		\end{align*} 
		
		We only have to prove $R_l \to R$ as $l \to \infty$. In $R_l - R$ adding and subtracting the term $$e^{-\langle Y_{(T-r)-}, v(r)\rangle} \left(b_n \langle Y_{(T-r)-}, v_l(r)^{\alpha}\rangle - \langle Y_{(T-r)-}^{\alpha\beta}, v_l(r)^{\alpha}\rangle \right)$$ we have	
		\begin{align*}
			|R_l - R| \le  \frac{1}{2} (I_1^{l} + I_2^{l}),
		\end{align*}
		where	
		\begin{align*}
			I_1^{l} = & \E_Y\left| \int_0^t \left(e^{-\langle Y_{(T-r)-}, v_l(r)\rangle} - e^{-\langle Y_{(T-r)-}, v(r)\rangle}\right) \left(b_n \langle Y_{(T-r)-}, v_l(r)^{\alpha}\rangle - \langle Y_{(T-r)-}^{\alpha\beta}, v_l(r)^{\alpha}\rangle \right) \,d r \right|
			\\
			I_2^{l}= & \E_Y\left| \int_0^t  e^{-\langle Y_{(T-r)-}, v(r)\rangle} \left(b_n \langle Y_{(T-r)-}, v_l(r)^{\alpha}-v(r)^{\alpha}\rangle  
			- \langle Y_{(T-r)-}^{\alpha\beta}, v_l(r)^{\alpha}-v(r)^{\alpha}\rangle \right) \,d r \right|.
		\end{align*}
		
		To prove $|R_l - R| \to 0$ as $l \to \infty$ we need to show
		
		\textbf{(i):} $I_1^{l} \to 0$; and 
		
		\textbf{(ii):} $I_2^{l}\to 0$ as $ l \to \infty$.\\
		
		\textbf{Proof of (i): }	 	Let $1< q <\frac{1}{\beta}$ and $p>1$ be such that $\frac{1}{p}+\frac{1}{q}=1$. Note that 
		\begin{align}\label{eq:lem3.1_m98_5}
			I_1^{l} &\le \int_0^t \E_Y \left| \left(e^{-\langle Y_{(T-r)-}, v_l(r)\rangle} - e^{-\langle Y_{(T-r)-}, v(r)\rangle}\right) \left(b_n \langle Y_{(T-r)-}, v_l(r)^{\alpha}\rangle - \langle Y_{(T-r)-}^{\alpha\beta}, v_l(r)^{\alpha}\rangle \right) \right| \,d r
			\nonumber \\	
			&\le \int_0^t \E_Y\left(\left| e^{-\langle Y_{(T-r)-}, v_l(r)\rangle} - e^{-\langle Y_{(T-r)-}, v(r)\rangle} \right|^p\right)^{1/p} \E_Y\left(\left|b_n \langle Y_{(T-r)-}, v_l(r)^{\alpha}\rangle - \langle Y_{(T-r)-}^{\alpha\beta}, v_l(r)^{\alpha}\rangle \right|^q\right)^{1/q} \,d r
			\nonumber \\
			& =: \int_0^t I^{l}_{11}(r) I^{l}_{12}(r) \,d r
		\end{align}	
		using H\"{o}lder's inequality in the second line. Here $I^{l}_{11}(r)$ and $I^{l}_{12}(r)$ denote the first and second terms of the integrand in the above. 
		
		Now let us use a notation from Fleischmann \cite{fleisch86}:
		\begin{align*}
			\lVert v \rVert_{\bf{L}^{\alpha,T}} : = \sup_{0\le t \le T}  \lVert v(t) \rVert_{\bf{L}^{\alpha}(\R)}.	
		\end{align*}	
		By \cite[Proposition A2]{fleisch86}, we have $v_l \to v$ in $\bf{L}^{\alpha,T}$ as $l \to \infty$. Thus there is a subsequence of $v_l$, which we also denote as $v_l$ by a slight abuse of notation, such that $v_l(t,x) \to v(t,x)$ as $l \to \infty$ for a.e. $t \in [0,T]$ and $x \in \R$. As the term inside the expectation of $I^{l}_{11}(r)$ is bounded by $2$, the dominated convergence theorem gives us
		\begin{align*}
			\lim_{l \to \infty }I^{l}_{11}(r)= 0
		\end{align*} for each $r \in [0,t]$. Since $|I^{l}_{11}(r)| \le 2$ for all $l$ and $r$, again by the dominated convergence theorem, to prove (i) as above we only have to show that $I^{l}_{12}(r)  \le C < \infty$ for some constant $C = C_t$ independent of $l$. 
		
		\begin{align}\label{eq:lem3.1_m98_6}
			I^{l}_{12}(r) & = \E_Y\left(\left|b_n \langle Y_{(T-r)-}, v_l(r)^{\alpha}\rangle - \langle Y_{(T-r)-}^{\alpha\beta}, v_l(r)^{\alpha}\rangle \right|^q\right)^{1/q} 
			\nonumber \\
			& \le b_n \E_Y\left(\left| \langle Y_{(T-r)-}, v_l(r)^{\alpha}\rangle \right|^{q}\right)^{1/q}+ \E_Y \left(\langle Y_{(T-r)-}^{\alpha\beta}, v_l(r)^{\alpha}\rangle^q\right)^{1/q} 
			\nonumber \\
			& := b_n I_{121}^{l}(r) + I_{122}^{l}(r)
		\end{align}
		using Minkowski's inequality.
		
		For all $r <t$,
		\begin{align}\label{eq:lem3.1_m98_7}
			I_{121}^{l}(r) = & \E_Y\left(\left| \langle Y_{(T-r)-}, v_l(r)^{\alpha}\rangle \right|^{q}\right)^{1/q}
			\nonumber \\
			= & \lVert v_l(r) \rVert_{\bf{L}^{\alpha}(\R)}^{\alpha} \E_Y\left[ \left( \int_{\R} \frac{1}{ \lVert v_l(r) \rVert_{\bf{L}^{\alpha}(\R)}^{\alpha}} v_l(r,x)^{\alpha} Y_{T-r}(x) \, d x \right)^{q}\right]^{1/q}
			\nonumber \\
			\le & \lVert v_l(r) \rVert_{\bf{L}^{\alpha}(\R)}^{\alpha}  \left[ \E_Y  \left( \frac{1}{ \lVert v_l(r) \rVert_{\bf{L}^{\alpha}(\R)}^{\alpha}}  \int_{\R} v_l(r,x)^{\alpha} Y_{T-r}(x)^{q}\,d x \right) \right]^{1/q}
			\nonumber \\
			= & \lVert v_l(r) \rVert_{\bf{L}^{\alpha}(\R)}^{\alpha-\alpha/q} \left[ \int_{\R} v_l(r,x)^{\alpha} \E_Y (Y_{T-r}(x)^{q}) \,d x \right]^{1/q}
			\nonumber \\
			\le & C_T \lVert v_l(r) \rVert_{\bf{L}^{\alpha}(\R)}^{\alpha - \alpha/q} \left[ \int_{\R} v_l(r,x)^{\alpha} (T-r)^{-\frac{q}{2}} \,d x +  \int_{\R} v_l(r,x)^{\alpha}  \,d x \right]^{1/q}
			\nonumber \\
			\le & C_T \lVert v_l(r) \rVert_{\bf{L}^{\alpha}(\R)}^{\alpha - \alpha/q} ((T-t)^{-\frac{q}{2}}+1)^{1/q} \left[ \int_{\R} v_l(r,x)^{\alpha}  \,d x \right]^{1/q}
			\nonumber \\
			= & C_T \lVert v_l(r) \rVert_{\bf{L}^{\alpha}(\R)}^{\alpha}((T-t)^{-\frac{q}{2}}+1)^{1/q}.		
		\end{align}
		Here we have used Jensen's inequality and Proposition \ref{prop:moment_est_Y_2} (applicable by our assumption that $q < \alpha$) in the first and second inequalities respectively. \cite[Proposition A2]{fleisch86} implies that for large enough $l \in \N$, $\lVert v_l (r)\rVert_{\bf{L}^{\alpha}(\R)} \le \lVert v\rVert_{\bf{L}^{\alpha,T}} +1$ for all $r \in [0,t]$. Therefore \eqref{eq:lem3.1_m98_7} gives us 
		\begin{align}\label{eq:lem3.1_m98_8}
			I_{121}^{l}(r) \le C_T (\lVert v\rVert_{\bf{L}^{\alpha,T}} +1)^{\alpha}((T-t)^{-\frac{q}{2}}+1)^{1/q},
		\end{align}
		when $l$ is large. 
		
		For the term $I^{l}_{122}$ we again proceed as in the calculation \eqref{eq:lem3.1_m98_7}. Note that, as $\alpha \beta q < \alpha$ by our assumption, we can again apply Proposition \ref{prop:moment_est_Y_2} in the following. Let $r<t$.
		\begin{align}\label{eq:lem3.1_m98_9}
			I^{l}_{122}(r) = & 
			\E_Y \left(\langle Y_{(T-r)-}^{\alpha\beta}, v_l(r)^{\alpha}\rangle^q\right)^{1/q}
			\nonumber \\
			\le & \lVert v_l(r) \rVert_{\bf{L}^{\alpha}(\R)}^{\alpha}  \left[ \E_Y  \left( \frac{1}{ \lVert v_l(r) \rVert_{\bf{L}^{\alpha}(\R)}^{\alpha}}  \int_{\R} v_l(r,x)^{\alpha} Y_{T-r}(x)^{\alpha\beta q}\,d x \right) \right]^{1/q}
			\nonumber \\
			= &  \lVert v_l(r) \rVert_{\bf{L}^{\alpha}(\R)}^{\alpha-\alpha/q} \left[ \int_{\R} v_l(r,x)^{\alpha} \E_Y (Y_{T-r}(x)^{\alpha\beta q}) \,d x \right]^{1/q}
			\nonumber \\
			\le &  C_T \lVert v_l(r) \rVert_{\bf{L}^{\alpha}(\R)}^{\alpha-\alpha/q} \left[ \int_{\R} v_l(r,x)^{\alpha}(T-r)^{-\frac{\alpha\beta q}{2}} \,d x  + \int_{\R} v_l(r,x)^{\alpha} \,d x\right]^{1/q} 
			\nonumber \\
			\le & C_T   \lVert v_l(r) \rVert_{\bf{L}^{\alpha}(\R)}^{\alpha} ((T-t)^{-\frac{\alpha\beta q}{2}}+1)^{1/q}	\le C_T  (\lVert v\rVert_{\bf{L}^{\alpha,T}} +1)^{\alpha}((T-t)^{-\frac{\alpha\beta q}{2}}+1)^{1/q}
		\end{align} for large $l$. We can observe that \eqref{eq:lem3.1_m98_8} and \eqref{eq:lem3.1_m98_9} together show that $I^l_{12}\le C_{t,T}$ where $C_{t,T}$ is independent of $l$. Thus \textbf{(i)} is proved.

		\textbf{Proof of (ii): } First note that $v_l \to v$ in $\bf{L}^{\alpha, T}$ implies the following almost everywhere convergence along a sub-sequence: there exists a sequence $(l_i)_i$ of natural numbers such that $$v_{l_i}(r,x) \to  v(r,x) \text{ as } i \to \infty$$ for a.e. $(r,x) \in [0,T]\x\R$. We will abuse our notation again and use $l$ to denote this subsequence.
		
		By Proposition \ref{prop:moment_est_Y_2},
		\begin{align}\label{eq:lem3.1_m98_10}
			I^{l}_{2} = & \E_Y\left| \int_0^t  e^{-\langle Y_{(T-r)-}, v(r)\rangle} \left(b_n \langle Y_{(T-r)-}, v_{l}(r)^{\alpha}-v(r)^{\alpha}\rangle  
			- \langle Y_{(T-r)-}^{\alpha\beta}, v_{l}(r)^{\alpha}-v(r)^{\alpha}\rangle \right) \,d r \right|
			\nonumber \\
			\le &  \int_0^t \int_{\R} \left| v_{l}(r,x)^{\alpha} - v(r,x)^{\alpha}\right| \cdot \E_Y \left( b_n Y_{(T-r)-}(x)+ Y^{\alpha\beta}_{(T-r)-}(x)\right) \,d r \,d x
			\nonumber \\
			\le &  C_T \int_0^t \int_{\R} \left[ b_n (T-r)^{-\frac{1}{2}} + (T-r)^{-\frac{\alpha\beta}{2}} + 1 \right] \left| v_{l}(r,x)^{\alpha} - v(r,x)^{\alpha}\right| \,d r \,d x
			\nonumber \\
			\le & C_T (b_n (T-t)^{-\frac{1}{2}} + (T-t)^{-\frac{\alpha\beta}{2}} +1) \int_0^t \int_{\R}  \left| v_{l}(r,x)^{\alpha} - v(r,x)^{\alpha}\right| \,d r \,d x	
		\end{align}
		with $C$ being independent of $x$ and $l$.  The right hand side converges to $0$ as $l \to \infty$ as $v_l \to v$ in $\bf{L}^{\alpha, T}$. This proves \textbf{(ii)} .

	\end{proof}
	
	Next we prove Proposition \ref{prop:(3.19)_m98}. For the proof we will need to understand how $Y$ behaves when $Z^{(n)}$ jumps. Since $n$ is fixed in this proof, we drop it to simplify the notations introduced in Section \ref{sec:prelim}. We shall write $Z = Z^{(n)}$, $V = V^n$, $S = S^n$, $U = U^n$, $T_l= T^n_l$, $\tau = \tau^n$, $\mc{N} = \mc{N}^n$, $\hat{\mc{N}} = \hat{\mc{N}}^n$, $\gamma(s) : = \gamma^n(s)$ and $\gamma_l := \gamma^n(T^n_l)$ for $l \in \N$. Also recall the notation  
	\begin{align*}
		\theta(s,x, \lambda) := \theta_n(s,x, \lambda) = e^{-\langle Y_{T-\gamma^n(s)} , Z^{(n)}_{\gamma^n(s) -}\rangle} \left( e^{- \lambda Y_{T-\gamma^n(s)}(x) } -1 \right).
	\end{align*}
	
	\begin{proof}[Proof of Proposition \ref{prop:(3.19)_m98}]	
		
		Fix $t \in [0,T)$ and let 
		\begin{align*}
			\theta_j := \theta(T_j, U_j, S_j) = e^{ -\langle Y_{T-\gamma_j}, Z_{\gamma_j} \rangle} - e^{ -\langle Y_{T-\gamma_j}, Z_{\gamma_j-} \rangle} 
			=e^{ -\langle Y_{T-\gamma_j}, Z_{\gamma_j-} \rangle} \left(e^{-S_j Y_{T-\gamma_j}(U_j)} -1\right).
		\end{align*} Suppose we show that on the event $ \{\gamma_l \le t < \gamma_{l+1}\}$ we have,		
		\begin{align}\label{eq:eq(3.19)_m98_7}
			\E_Y \left[e^{-\langle Y_{T-t}, Z_t\rangle}\right]			
			=   \E_Y\left[e^{-\langle Y_T, Z_0 \rangle} + \int_{0}^{t}\tilde{\mc{I}}(Y_{(T-r)-}, Z_{r-} ) \,d r 
			+	\sum_{i=1}^l\theta_i \right], 			
		\end{align}
		then we can write
		\begin{align*}
			\sum_{i=1}^l \theta_i = \int_0^{\tau(t)} \int_{\R} \int_{\R_+} \theta (s, x, \lambda) \mc{N}(\,d \lambda, \, d x, \, d s),
		\end{align*}
		since for $\gamma_l \le t < \gamma_{l+1}$ by definition (see \eqref{eq:gamma^Z,n} and  \eqref{eq:tau^n}) $\tau(t) \in [ T_l , T_{l+1})$. Replace the above in \eqref{eq:eq(3.19)_m98_7} and we obtain \eqref{eq:(3.19)_m98}. So, to complete the proof of \eqref{eq:(3.19)_m98} we need to establish \eqref{eq:eq(3.19)_m98_7}.	
		
		We will prove this by induction on $l = 0, 1, 2 , \ldots$ and use \eqref{eq:(3.5)_m98} repeatedly in the following. Note that \eqref{eq:eq(3.19)_m98_7} for $t=0$ is trivial. When $0= \gamma_0 < t < \gamma_1$, by our convention $l=0$. In this case \eqref{eq:eq(3.19)_m98_7} is 
		\begin{align}\label{eq:eq(3.19)_m98_8}
			\E_Y \left[e^{-\langle Y_{T-t}, Z_t\rangle}	\right]		
			=  \E_Y\left[e^{-\langle Y_T, Z_0 \rangle} + \int_{0}^{t}\tilde{\mc{I}}(Y_{(T-r)-}, Z_{r-} ) \,d r  \right] 			
		\end{align}
		and this follows directly from \eqref{eq:(3.5)_m98}. 
		
		Now assume that \eqref{eq:eq(3.19)_m98_7} holds on the event $\{\gamma_l \le s < \gamma_{l+1}\}$. We first show that 
		\begin{align}\label{eq:eq(3.19)_m98_9}
			& \E_Y\left[ e^{-\langle Y_{T-\gamma_{l+1}}, Z_{\gamma_{l+1}}\rangle}\right]
			\nonumber \\
			= &  \E_Y\left[e^{-\langle Y_T, Z_0 \rangle} + \int_{0}^{\gamma_{l+1}}\tilde{\mc{I}}(Y_{(T-r)-}, Z_{r-} ) \,d r 
			+	\sum_{i=1}^{l+1}\theta_i \right].					
		\end{align}
		
		By definition of $\theta_{l+1}$ and induction hypothesis,
		\begin{align*}
			& \E_Y \left[e^{-\langle Y_{T-\gamma_{l+1}}, Z_{\gamma_{l+1}}\rangle}\right]
			=  \E_Y [\theta_{l+1} ]+ \E_Y \left[e^{-\langle Y_{T-\gamma_{l+1}}, Z_{\gamma_{l+1}-}\rangle}\right]
			\\
			= & \E_Y [\theta_{l+1} ] + \lim_{\substack{{s\uparrow \gamma_{l+1}}\\{\gamma_l \le s<\gamma_{l+1}} } } 
			\E_Y e^{-\langle Y_{T-s}, Z_{s}\rangle}
			\\
			= & \E_Y [\theta_{l+1} ] + \lim_{\substack{{s\uparrow \gamma_{l+1}}\\{\gamma_l \le s<\gamma_{l+1}} } }  \E_Y\left[e^{-\langle Y_T, Z_0 \rangle} + \int_{0}^{s}\tilde{\mc{I}}(Y_{(T-r)-}, Z_{r-} ) \,d r 
			+	\sum_{i=1}^l\theta_i \right] 
			\\
			= & \E_Y [\theta_{l+1} ] +  \E_Y\left[e^{-\langle Y_T, Z_0 \rangle} + \int_{0}^{\gamma_{l+1}}\tilde{\mc{I}}(Y_{(T-r)-}, Z_{r-} ) \,d r 
			+	\sum_{i=1}^l\theta_i	 \right] 
			\\			
			= &  \E_Y\left[e^{-\langle Y_T, Z_0 \rangle} + \int_{0}^{\gamma_{l+1}}\tilde{\mc{I}}(Y_{(T-r)-}, Z_{r-} ) \,d r 
			+	\sum_{i=1}^{l+1}\theta_i \right]. 			
		\end{align*}
		This proves \eqref{eq:eq(3.19)_m98_9}.
		
		The last step of the induction is to prove \eqref{eq:eq(3.19)_m98_7} when $l$ is replaced with $l+1$ and $\gamma_{l+1} < t < \gamma_{l+2}$. We use \eqref{eq:(3.5)_m98} with $T - \gamma_{l+1}$, $t -\gamma_{l+1}$ instead of $T$, $t$ and then apply  \eqref{eq:eq(3.19)_m98_9} to get,
		\begin{align}\label{eq:eq(3.19)_m98_10}
			&\E_Y \left[e^{-\langle Y_{T-t}, Z_t\rangle}\right] = \E_Y\left[ \exp\left(- \langle Y_{T-t}, V_{t-\gamma_{l+1}}(Z_{\gamma_{l+1}})\rangle\right)\right]
			\nonumber \\			
			= & \E_Y \left[\exp\left( -\langle Y_{T - \gamma_{l+1}  }, V_{0}(Z_{\gamma_{l+1}})\rangle \right)\right]
			+ \E_Y \left[\int_{0 }^{t-\gamma_{l+1 }} \tilde{\mc{I}} (Y_{(T-\gamma_{l+1} -r)-}, V_r(Z_{\gamma_{l+1}})) \,d r	\right]
			\nonumber \\
			= & \E_Y\left[e^{-\langle Y_T, Z_0 \rangle} + \int_{0}^{\gamma_{l+1}}\tilde{\mc{I}}(Y_{(T-r)-}, Z_{r-} ) \,d r 
			+	\sum_{i=1}^{l+1}\theta_i \right] 
			\nonumber \\
			& +  \E_Y\left[ \int_{\gamma_{l+1}}^{t} \tilde{\mc{I}} (Y_{(T -r)-}, Z_{r-}) \,d r\right],			 
		\end{align}
		which is the required expression. This completes the induction argument and proves \eqref{eq:eq(3.19)_m98_7}.		
		
	\end{proof}
	
	For the proof of our final proposition, we continue to suppress $n$ and use the notations introduced before the previous proof. Define
	\begin{align}\label{eq:(3.20)_m98_mart}
		M_s = \int^s_0 \int_{\R} \int_{0}^{\infty} \theta(r,x,\lambda) [\mc{N}(d \lambda, \, d x, \, d r ) - \hat{\mc{N}}(d \lambda, \, d x, \, d r)]
	\end{align} and note that $M$ is an $\mc{F}^{Z^{(n)}}$-martingale. 
	
	\begin{proof}[Proof of Proposition \ref{prop:lem3.3_m98}]
		We recall that
		\begin{align*}
			\eta = 	 \frac{\alpha \beta(\alpha \beta -1)}{\Gamma (2-\alpha \beta)} \text { and }g(r,y)  = \int_{0+}^r (e^{-\lambda y} -1 +\lambda y)\lambda^{-\alpha\beta-1} \,d \lambda, \quad r, y \ge 0.
		\end{align*} Since for all $y\ge 0$, 
		\begin{align*}
			y^{\alpha\beta}  = \eta \int_{0+}^{\infty}(e^{-\lambda y} -1 +\lambda y)\lambda^{-\alpha\beta -1} \,d \lambda
			= \eta g(1/n, y) + \eta \int_{1/n}^{\infty} (e^{-\lambda y} -1 )\lambda^{-\alpha\beta -1} \,d \lambda + b_n y,
		\end{align*}		
		we can write
		\begin{align}\label{eq:(3.20)_m98_2}
			& \E_Y\left[\int_{0}^{t}\tilde{\mc{I}}(Y_{(T-r)-}, Z_{r-} ) \,d r \right]
			=  \E_Y \left[\int^t_{0} e^{-\langle Y_{(T-r)-}, Z_{r-}\rangle}  \langle b_n Y_{(T-r)-} -Y^{\alpha\beta}_{(T-r)-}, Z_{r-}^{\alpha} \rangle  \,d r\right]
			\nonumber \\
			= & -  \eta  \E_Y \left[\int^t_{0} e^{-\langle Y_{(T-r)-}, Z_{r-}\rangle} \langle g(1/n, Y_{(T-r)-}(\cdot)), Z_{r-}^{\alpha} \rangle  \,d r\right]
			\nonumber \\
			& -  \eta  \E_Y\left[ \int^t_{0} e^{-\langle Y_{(T-r)-}, Z_{r-}\rangle} \langle \int_{1/n}^{\infty} (e^{-\lambda Y_{(T-r)-}(\cdot)} -1 )\lambda^{-\alpha\beta -1} \,d \lambda, Z_{r-}^{\alpha} \rangle  \,d r\right].		
		\end{align}
		
		Let 
		\begin{align*}		
			h(r)  = &  e^{-\langle Y_{T-r}, Z_{r-} \rangle} \int_{\R} \frac{\left(Z_{r-}(x)\right)^{\alpha}}{\lVert Z_{r-} \rVert_{\alpha}^{\alpha} } \int_{1/n}^{\infty} \left( e^{-\lambda Y_{T-r}(x)} -1 \right) \lambda^{-\alpha\beta -1} \,d \lambda \,d x 
			\\
			\beta(r)  = &\lVert Z_{r-}\rVert^{\alpha}_{\alpha}.
		\end{align*}		
		
		Then by definition $\gamma(t) = \inf\{ s\ge 0 \mid \int_0^s \beta(s) \,d s > t\}$ and also recall from \eqref{eq:tau^n} that $\gamma (\tau(s)) = s$. Given $Y$, applying \cite[Exercise 6.12]{ek}, for any $s \ge 0$,  we have
		\begin{align}\label{eq:(3.20)_m98_4}
			& \eta  \int^{s}_{0} e^{-\langle Y_{(T-r)-}, Z_{r-}\rangle} \langle \int_{1/n}^{\infty} (e^{-\lambda Y_{(T-r)-}(\cdot)} -1 )\lambda^{-\alpha\beta -1} \,d \lambda, Z_{r-}^{\alpha} \rangle  \,d r
			\nonumber \\
			= & \eta \int^{s}_{0} h(r)\beta(r)\,d r = \eta \int^{\gamma(\tau(s))}_{0} h(r)\beta(r)\,d r =  \eta \int^{\tau(s)}_{0} h(\gamma(r))\,d r
			\nonumber \\
			= & \eta \int^{\tau(s)}_{0} e^{-\langle Y_{T-\gamma(r)}, Z_{\gamma(r)-} \rangle} \int_{\R} \frac{\left(Z_{\gamma(r)-}(x)\right)^{\alpha}}{\lVert Z_{\gamma(r)-} \rVert_{\alpha}^{\alpha} } \int_{1/n}^{\infty} \left( e^{-\lambda Y_{T-\gamma(r)}(x)} -1 \right) \lambda^{-\alpha\beta -1} \,d \lambda \, d x \, d r
			\nonumber \\
			= & \eta \int^{\tau(s)}_{0} \int_{\R} \int_{0}^{\infty} \theta(r,x,\lambda) \frac{\left(Z_{\gamma(r)-}(x)\right)^{\alpha}}{\lVert Z_{\gamma(r)-} \rVert_{\alpha}^{\alpha} } \ind(\lambda > 1/n)\lambda^{-\alpha\beta -1} d \lambda \,d x \,d r
			\nonumber \\
			= &  \int^{\tau(s)}_{0} \int_{\R} \int_{0}^{\infty} \theta(r,x,\lambda) \hat{\mc{N}} (d \lambda, \, d x, \, d r)
		\end{align}	
		using Lemma \ref{lem:lem3.2_m98} in the last line. 
		
		Combining \eqref{eq:(3.20)_m98_mart} and the calculations in \eqref{eq:(3.20)_m98_2}, \eqref{eq:(3.20)_m98_4} we get,
		\begin{align}\label{eq:(3.20)_m98_6}
			& \E_Y \left[\int_{0}^{t}\tilde{\mc{I}}(Y_{(T-r)-}, Z_{r-} ) \,d r 
			+ \int_{0}^{\tau(t)} \int_{\R} \int_{\R_+} \theta(s,x,\lambda) \mc{N}(d \lambda, \, d x, \, d s)  \right]
			\nonumber \\
			= & \E_Y \left[ M_{\tau(t)} - \eta  \int_{0}^{t} e^{-\langle Y_{(T-r)-}, Z_{r-}\rangle} \langle g(1/n, Y_{(T-r)-}(\cdot)), Z_{r-}^{\alpha} \rangle  \,d r  \right]
		\end{align} 
		We can now use \eqref{eq:(3.20)_m98_6} to rewrite \eqref{eq:(3.19)_m98}. 
		\begin{align}\label{eq:(3.21)_m98_1}
			& \E_Y \left[e^{-\langle Y_{T-t}, Z_t\rangle} \right]
			\nonumber \\		
			& = \E_Y \left[M_{\tau(t)}\right] - \E_Y \left[ \eta  \int_{0}^{t} e^{-\langle Y_{(T-r)-}, Z_{r-}\rangle} \langle g(1/n, Y_{(T-r)-}(\cdot)), Z_{r-}^{\alpha} \rangle  \,d r  \right]		
		\end{align} 		
		Recall the notation $\Upsilon_m(t) = \gamma(m)\wedge t$ and observe that for any $m \in \N$, $\tau(\Upsilon_m(t)) = \tau(t)\wedge m$.  We localize the above as follows.		
		\begin{align}
			& \E_Y \left[e^{-\langle Y_{T-\Upsilon_m(t)}, Z_{\Upsilon_m(t)}\rangle}\right] 
			\nonumber \\		
			= & \E_Y\left[ e^{-\langle Y_T, Z_0 \rangle} \right] + \E_Y\left[ M_{\tau(t)\wedge m}	\right]
			\nonumber \\
			& - \eta  \E_Y \left[ \int^{\Upsilon_m(t)}_{0} e^{-\langle Y_{(T-r)-}, Z_{r-}\rangle} \langle g(1/n, Y_{(T-r)-}(\cdot)), Z_{r-}^{\alpha} \rangle  \,d r\right]	
		\end{align}
		Apply $\E_Z$ to the above. As $$\E_Z \E_Y( M_{ \tau^Z(t)\wedge m} ) = \E_Y \E_Z( M_{ \tau^Z(t)\wedge m} ) = 0$$ we have,
		\begin{align}
			\E \left[e^{-\langle Y_{T-\Upsilon_m(t)}, Z_{\Upsilon_m(t)}\rangle}\right] 
			=& \E\left[ e^{-\langle Y_T, Z_0 \rangle}\right] 	
			\nonumber \\
			& - \eta  \E \left[ \int^{\Upsilon_m(t)}_{0} e^{-\langle Y_{(T-r)-}, Z_{r-}\rangle} \langle g(1/n, Y_{(T-r)-}(\cdot)), Z_{r-}^{\alpha} \rangle  \,d r \right].
		\end{align}
		This is the required expression.
	\end{proof}	
	
	\appendix
	\section{A Gr\"{o}nwall-type Lemma}\label{app:gronwall} 
		
		We first state the ordinary Gr\"{o}nwall lemma.		
		\begin{lem}\label{lem:gronwall_0}
			Let $T>0$ and $f$, $g$ and $h$ be non-negative integrable functions on $[0,T]$ satisfying the following inequality for all $t \in [0,T]$,
			\begin{align}
				f(t) \le g(t) + \int_0^t h(s)f(s)\, ds. 
			\end{align}
			Then for a.e. $t \in [0,T]$ we have,
			\begin{align}
				f(t) \le g(t) + \int_0^t g(s) h(s) \exp \left( \int_0^s h(r) \, dr \right) \, ds.
			\end{align}
		\end{lem}		
		The proof is omitted as it is standard. We can now use the above result to prove the required estimate. 			
		
		\begin{lem}\label{lem:gronwall}
			Let $\gamma, \theta \in (0,1)$ and $f:(0, T] \to [0, \infty)$ be an integrable function such that and for all $t \in [0,T]$
			\begin{align}\label{eq:gronwall_hyp}
				f(t) \le ct^{-\theta} + c\int_0^t (t-r)^{-\gamma}f(r) \, d r
			\end{align}
			for some constant $c>0$. Then there exists an integrable function $C_1: (0,T] \to [0, \infty)$ and a constant $C_2 >0$ such that, for a.e. $t \in [0,T]$,
			\begin{align}
				f(t) \le C_1(t) + \int_0^t C_1(s) \exp(C_2 s) \,d s.
			\end{align}
			Moreover $C_1, C_2$ are independent of the function $f$.
		\end{lem}
		
		\begin{proof}
			Let $k>0$ be the smallest integer such that $\gamma < \frac{k}{k+1}$ and $t \in [0,T]$. We apply \eqref{eq:gronwall_hyp} and use the substitutions $w = \frac{r}{t}$ and $v = \frac{r-u}{t-u}$ for the first and second integrals in the RHS of the following computation.
			\begin{align}\label{eq:gronwall_1}
				f(t) \le &  c t^{-\theta} + c^2 \int_0^t (t-r)^{-\gamma} t^{-\theta} \,d r + c^2 \int_0^t \int_0^r (t-r)^{-\gamma}(r-u)^{-\gamma} f(u)\, du \, dr
				\nonumber \\
				= & c t^{-\theta} + c^2 t^{1-\gamma - \theta} \int_0^1 (1-w)^{-\gamma} w^{- \theta} \,  dw +c^2 \int_0^t \,d u f(u) \int_u^t \,d r (t-r)^{-\gamma}(r-u)^{-\gamma} 
				\nonumber \\
				= & c_1(t) + c^2 \int_0^t f(u) (t-u)^{1-2\gamma} \, d u \int_0^1 \,d v   (1-v)^{-\gamma}v^{-\gamma}
				\nonumber \\
				= & c_1(t) + c'_1 \int_0^t f(u) (t-u)^{1-2\gamma} \, d u		
			\end{align}		
			where $c_1(t) = ct^{-\theta} + c^2 B(1-\gamma, 1- \theta)\, t^{1-\gamma - \theta} $ and $c'_1 = c^2 B(1-\gamma, 1-\gamma)$ with $B$ here denoting the Beta function. Again applying \eqref{eq:gronwall_hyp} to \eqref{eq:gronwall_1} we have
			\begin{align}	
				f(t) \le & c_2(t) + c'_2\int_0^t f(u) (t-u)^{2-3\gamma} \, d u,
				\nonumber 
			\end{align}		
			where $c_2(t) = c_1(t) + c'_1 c  B(2-2\gamma, 1- \theta) \, t^{2-2\gamma - \theta} $ and $c'_2 = c'_1 c B(1-\gamma, 2-2\gamma)$. Continuing	this process for $k$ steps we get,
			\begin{align}\label{eq:gronwall_2}
				f(t)  \le  & c_k(t) + c'_k \int_0^t f(u)(t-u)^{k - (k+1)\gamma} \, d u
				\nonumber \\
				\le & c_k(t) + c'_k T^{k - (k+1)\gamma} \int_0^t f(u) \, du.
			\end{align}	
			where the last step is obtained by our assumption on $k$. Also note that $f$ is non-negative and integrable on $[0,T]$ by hypothesis. Therefore we can apply the standard Gr\"{o}nwall's inequality from Lemma \ref{lem:gronwall_0} and have,
			\begin{align}
				f(t) \le c_k(t) + \int_0^t  c_k(s) \exp(c'_k T^{k - (k+1)\gamma} s) \, ds
				\nonumber			 
			\end{align}
			for a.e. $t \in [0,T]$.	We can thus define $C_1(t) = c_k(t)$ and $C_2 = c'_k  T^{k - (k+1)\gamma}$. Clearly these are independent of $f$. To see that $C_1$ is integrable on $(0, T]$ we only note that each $t^{m-m\gamma - \theta}$ ($m = 0 ,\ldots, k$) is integrable. 
		\end{proof}
		
		\section{Norm Estimates for Solutions of the Evolution Equation}\label{app:pde}
		This section contains some useful properties of the solutions to the PDE
		\begin{align}\label{eq:pde_Z_2}
			\frac{\partial}{\partial t} v(t,x) & = \frac{1}{2}\frac{\partial^2}{\partial x^2}v(t,x) - b_n v(t,x)^{\alpha}, \quad x \in \R, t \in [0,T], 
			\nonumber \\
			v(0, \cdot) & = \varphi.
		\end{align}
		where $T$ is arbitrary but finite. When $\varphi \in \mc{S}(\R)_+$, \cite[Theorem A]{iscoe86} guarantees that this equation admits a unique solution.
		
		\begin{lem}\label{lem:pde_Z_properties}
			If $v = v(t,x)$ solves the PDE \eqref{eq:pde_Z_2} and $\varphi \in \mc{S}(\R)_+$, then $v$ satisfies all the hypotheses of Proposition \ref{prop:time_dep_mart_prob}.
		\end{lem}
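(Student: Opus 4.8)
The plan is to translate the three conditions of Proposition~\ref{prop:time_dep_mart_prob}, which are phrased for $\psi_s = v(T-s,\cdot)$, into equivalent conditions on the map $t\mapsto v(t,\cdot)$ (legitimate since $s\mapsto T-s$ is a homeomorphism of $[0,T]$), and then to verify each from the mild equation $v_t = P_t\varphi - \tfrac{b_n}{2}\int_0^t P_{t-s}(v_s^\alpha)\,ds$. First I would assemble the a priori facts already available from \cite[Theorem~A]{iscoe86}, \cite{fleisch86} and \cite[\textsection 3]{mytnik02}: $v$ is non-negative and jointly continuous, smooth in $x$ for $t>0$, and comparison with the linear heat equation gives $0\le v(t,x)\le P_t\varphi(x)$, whence $\lVert v(t,\cdot)\rVert_\infty\le\lVert\varphi\rVert_\infty$ and $\lVert v(t,\cdot)\rVert_p\le\lVert\varphi\rVert_p$ for every $p\in[1,\infty]$, uniformly in $t\in[0,T]$. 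Differentiating the mild equation once in $x$, using $\partial_x(v_s^\alpha)=\alpha v_s^{\alpha-1}\partial_x v_s$ together with $\lVert v_s^{\alpha-1}\rVert_\infty\le\lVert\varphi\rVert_\infty^{\alpha-1}$, a Gr\"{o}nwall argument in $t$ gives $\sup_{t\le T}\lVert\partial_x v_t\rVert_p<\infty$ for every $p\in[1,\infty]$; this is the only additional regularity input needed below.

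\textbf{Condition (i).} I would show $t\mapsto v(t,\cdot)$ is continuous into $L^1(\R)$ and into $L^2(\R)$. The linear part $t\mapsto P_t\varphi$ is continuous into both spaces by strong continuity of the heat semigroup, since $\varphi\in\mc{S}(\R)\subset L^1\cap L^2$. For the Duhamel term, $\lVert P_{t-s}(v_s^\alpha)\rVert_1\le\lVert v_s^\alpha\rVert_1\le\lVert\varphi\rVert_\infty^{\alpha-1}\lVert\varphi\rVert_1$ and likewise $\lVert P_{t-s}(v_s^\alpha)\rVert_2$ is bounded uniformly in $s$; continuity in $t$ then follows by dominated convergence, using continuity of $(t,s)\mapsto P_{t-s}(v_s^\alpha)$ in $L^1$ and $L^2$ for $s<t$ and the (bounded, hence integrable) domination just obtained.

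\textbf{Conditions (iii) and (ii).} Since $\alpha>1$, $v_s^\alpha$ is $C^1$ in $x$ with $\sup_{s\le T}\lVert\partial_x(v_s^\alpha)\rVert_p\le\alpha\lVert\varphi\rVert_\infty^{\alpha-1}\sup_{s\le T}\lVert\partial_x v_s\rVert_p<\infty$ for $p\in[1,\infty]$, so after integrating by parts and substituting $u=t-s$ one may write $\partial^2_{xx}v_t=P_t\varphi''-\tfrac{b_n}{2}\int_0^t(\partial_x p_u)\ast\partial_x(v_{t-u}^\alpha)\,du$; as $\lVert\partial_x p_u\rVert_1=Cu^{-1/2}$ is integrable on $[0,T]$, this identity is justified and Young's inequality yields $\sup_{t\le T}\lVert\partial^2_{xx}v_t\rVert_p<\infty$ for every $p\in[1,\infty]$. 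Continuity of $t\mapsto\partial^2_{xx}v(t,\cdot)$ into $L^\infty(\R)$ then follows by dominated convergence, using continuity of $s\mapsto\partial_x(v_s^\alpha)$ in $L^\infty(\R)$ (again from the mild equation) and of $t\mapsto P_t\varphi''$ in $L^\infty$; this gives (iii). For (ii), note $\partial_s\psi_s=-(\partial_t v)(T-s,\cdot)$ and, by the PDE, $\partial_t v=\tfrac12\partial^2_{xx}v-\tfrac{b_n}{2}v^\alpha$; with $q:=\tfrac{\alpha\beta}{\alpha\beta-1}\in(1,\infty)$ (finite because $\alpha\beta>1$) we have $\lVert v_t^\alpha\rVert_q^q=\int v_t^{\alpha q}\le\lVert\varphi\rVert_\infty^{\alpha q-1}\lVert\varphi\rVert_1$ uniformly in $t$, and the preceding paragraph gives $\sup_{t\le T}\lVert\partial^2_{xx}v_t\rVert_q<\infty$, so $\sup_{s\le T}\lVert\partial_s\psi_s\rVert_q<\infty$, which is (ii).

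I expect the only genuine obstacle to be the mild singularity one meets when differentiating the Duhamel term twice in $x$: the naive bound $\lVert\partial^2_{xx}P_r g\rVert_p\le\lVert\partial^2_{xx}p_r\rVert_1\lVert g\rVert_p\le C r^{-1}\lVert g\rVert_p$ produces the divergent integral $\int_0^t(t-s)^{-1}\,ds$. The resolution exploits $\alpha>1$, which makes $v^\alpha$ a $C^1$ function of $x$ with the \emph{bounded} coefficient $\alpha v^{\alpha-1}$: one spatial derivative can be moved onto $v^\alpha$, leaving the kernel with only the integrable singularity $u^{-1/2}$. The behaviour near $t=0$ is harmless because $\varphi$ is already smooth, and the auxiliary Gr\"{o}nwall bounds on $\partial_x v_t$ together with the joint-continuity statements are routine consequences of the cited existence and regularity theory.
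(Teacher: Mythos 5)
Your argument is correct in outline, but it travels through the PDE in the opposite direction from the paper. The paper never differentiates the Duhamel term in $x$ at all: it sets $w=\partial_t v$, takes from the proof of Iscoe's Theorem~A the \emph{linearized} mild equation $w(t)=P_t\tilde\varphi-\tfrac{\alpha b_n}{2}\int_0^t P_{t-s}(v(s)^{\alpha-1}w(s))\,ds$ with $\tilde\varphi=\tfrac12\varphi''-\tfrac{b_n}{2}\varphi^\alpha$, runs Gr\"onwall on $\lVert w(t)\rVert_\infty$ and on $\lVert w(t)\rVert_{\rho}$ with $\rho=\tfrac{\alpha\beta}{\alpha\beta-1}$ (which gives (ii) directly), and then recovers boundedness and $L^\infty$-continuity of $\partial^2_{xx}v=2w+b_n v^\alpha$ from the equation itself — so the singular kernels $\partial_x p_u$, $\partial^2_{xx}p_u$ never appear. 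You instead first control $\partial_x v$ by Gr\"onwall on the once-differentiated mild equation, shift one derivative onto $v^\alpha$ by integration by parts so that only the integrable $u^{-1/2}$ singularity of $\lVert\partial_x p_u\rVert_1$ survives, obtain $\partial^2_{xx}v$ directly, and finally read off $\partial_t v$ from the PDE to get (ii). Both routes are legitimate and both exploit the same structural facts ($\alpha>1$ so $y\mapsto y^\alpha$ is $C^1$ with bounded derivative $\alpha v^{\alpha-1}$ on the range of $v$, plus the comparison $0\le v\le P_t\varphi$). What the paper's route buys is that it avoids kernel estimates entirely, at the price of importing the equation for $w$ and the $C^1$-in-time regularity from Iscoe's proof; your route needs the extra a priori Gr\"onwall bound on $\partial_x v$ and the kernel estimate, but is more self-contained about where the smoothing comes from. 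Two small points you should tighten if you write this up: the continuity of $s\mapsto\partial_x(v_s^\alpha)$ in $L^\infty$, which you invoke for (iii), needs its own (routine) argument of the same $\int_0^s+\int_s^t$ splitting type as the boundedness — ``dominated convergence'' alone does not quite describe it because the upper limit of the Duhamel integral moves with $t$; and the initial Gr\"onwall step for $\partial_x v_t$ tacitly assumes $t\mapsto\lVert\partial_x v_t\rVert_p$ is finite and locally bounded to begin with, which should be justified from the cited regularity theory rather than asserted.
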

		
		\begin{proof}
			\textbf{(a)}  It follows from \cite[Proposition A2]{fleisch86} and proof of \cite[Lemma 2.1(c)]{mytnik02} that $s \mapsto v(s) \in {\bf{L}}^{\eta}(\R)\cap {\bf{L}}^{\rho}(\R)$ is continuous.
			
			\textbf{(b)} 
			We first prove that 
			\begin{align}\label{eq:pde_Z_prop_1}
				\sup_{s\le T}\left\lVert \frac{\partial^2}{\partial x^2}v(s) \right\rVert_{\infty} =\left\lVert \frac{\partial^2}{\partial x^2}v\right\rVert_{{\bf{L}}^{\infty}([0,T]\x \R)}  < \infty.
			\end{align}
			Note that as $\varphi \in \mc{S}_+$, by \cite[Theorem A]{iscoe86}, the solution $v : [0, T] \to C_0(\R_+)_+$ (continuous, non-negative functions vanishing at infinity) is a continuous map. Therefore by \eqref{eq:pde_Z_2}, to show \eqref{eq:pde_Z_prop_1} it is enough to prove that
			\begin{align}\label{eq:pde_Z_prop_1.5}
				\sup_{s\le t}\lVert w(s)\rVert_{\infty}<\infty
			\end{align}
			where we have used the notation $w(s) = \dot{v}(s) = \frac{\partial}{\partial s}v(s).$		
			
			From the proof of \cite[Theorem A]{iscoe86} it follows that $w$ must satisfy the PDE
			\begin{align}\label{eq:pde_Z_derivative}
				w(t) = P_t (\tilde{\varphi}) -  \alpha b_n \int_0^t P_{t-s}(v(s)^{\alpha-1}w(s)) \, d s
			\end{align}
			where $\tilde{\varphi} = \frac{1}{2}\frac{\partial^2}{\partial x^2}\varphi - \frac{b_n}{2} \varphi^{\alpha}$. This gives us,
			\begin{align}\label{eq:pde_Z_prop_2}
				\lVert w(t) \rVert_{\infty} \le \lVert \tilde{\varphi}\rVert_{\infty} + \alpha b_n \int_0^t \lVert (v(s)^{\alpha-1}\rVert_{\infty} \lVert w(s)\rVert_{\infty} \,d s,
			\end{align}
			from which using Gr\"{o}nwall's inequality (see \cite[Appendix B2]{evans10}) we obtain
			\begin{align}
				\lVert w(t) \rVert_{\infty}  \le  \lVert \tilde{\varphi}\rVert_{\infty} \exp \left( \alpha b_n  \int_0^t \lVert v(s)^{\alpha-1}\rVert_{\infty} \,d s \right) < \infty.
			\end{align}	
			As $v$ is continuously differentiable (see the proof of \cite[Theorem A]{iscoe86}),  $s\mapsto w(s, \cdot)$ is continuous. This fact along with the above gives us \eqref{eq:pde_Z_prop_1.5}.
			
			Next we show that the map $[0,T] \to \bf{L}^{\infty}(\R)$, $t \mapsto \frac{\partial^2}{\partial x^2} v(t)$ is continuous, i.e.
			\begin{align}\label{eq:pde_Z_prop_3}
				\left\lVert \frac{\partial^2}{\partial x^2} v(s) - \frac{\partial^2}{\partial x^2} v(t) \right\rVert_{\infty} \to 0 \text{ as } s \to t \text{ in } [0,T].
			\end{align}
			Similarly as above, since $v \in C_0(\R)_+$, by \eqref{eq:pde_Z_2} it is enough to show that 
			\begin{align} \label{eq:pde_Z_prop_4}
				\lVert w(t) - w(s)\rVert_{\infty} \to 0 \text{ as }s \to t			
			\end{align}
			and we use \eqref{eq:pde_Z_derivative} for this purpose.
			
			Let $0\le s < t \le T$. Let $f_r = v(r)^{\alpha-1}w(r)$. Then from \eqref{eq:pde_Z_derivative}
			\begin{align}\label{eq:pde_Z_prop_5}
				w(t) - w(s) = & P_t(\tilde{\varphi}) - P_s(\tilde{\varphi}) - \alpha b_n \left[ \int_0^t P_{t-r} (f_r) \, dr - \int_0^s P_{s-r} (f_r)  \, dr\right]
				\nonumber \\
				= & P_t(\tilde{\varphi}) - P_s(\tilde{\varphi}) - \alpha b_n \left[ \int_s^t P_{t-r} (f_r) \, dr + \int_0^s \left( P_{t-r} (f_r)  - P_{s-r} (f_r) \right)  \, dr\right]
				\nonumber \\
				= & P_t(\tilde{\varphi}) - P_s(\tilde{\varphi}) - \alpha b_n \left[ \int_s^t P_{t-r} (f_r) \, dr + \int_0^s \left( P_{t-s}( P_{s-r}(f_r))  - P_{s-r} (f_r) \right)  \, dr\right]
			\end{align}
			using that fact $P_{t-r} f_r = P_{t-s} (P_{s-r} f_r)$. 
			
			As $\varphi \in \mc{S}(\R)_+$, by our definition $\tilde{\varphi} \in \mc{S}(\R)$. Therefore when $s \to t$, $\lVert P_t \tilde{\varphi} - P_s \tilde{\varphi} \rVert_{\infty} \to 0$. For the second term in \eqref{eq:pde_Z_prop_5}, note that if we can prove that $$\sup_{r \le t} \lVert P_{t-r}(f_r) \rVert_{\infty} <\infty, $$ it will follow that $ \int_s^t P_{t-r} (f_r) \,d r \to 0$ in $\bf{L}^{\infty}(\R)$ as $s \to t$. We have, for  $x \in \R$
			\begin{align*}
				\left| P_{t-r}(f_r) (x)\right| \le \left| \int_{\R} p_{t-r}(x-y) f_r(y) \,d r \right| \le \lVert f_r \rVert_{\infty} = \lVert v(r)^{\alpha -1} w(r) \rVert_{\infty} <\infty.
			\end{align*}	
			since know $v (r) \in C_0(\R)$ and we have already shown that $\sup_{r \le t} \lVert w(r)\rVert_{\infty} <\infty$. Similarly, the third term in \eqref{eq:pde_Z_prop_5} can be shown to be converging to $0$ in $\bf{L}^{\infty}(\R)$ as $s \to t$. This proves \eqref{eq:pde_Z_prop_4} and hence \eqref{eq:pde_Z_prop_3}. 		
			
			\textbf{(c)} Let $\sigma = \frac{\alpha\beta}{\alpha\beta -1}$. To show $ \sup_{s\le t}\lVert \dot{v}(s)\rVert_{\sigma}<\infty $ we again use \eqref{eq:pde_Z_derivative}. Note that
			\begin{align}\label{eq:w_Lp}
				\int_{\R} |w(t,x)|^{\sigma}\,d x \le C \left[ \int_{\R} |P_t (\tilde{\varphi})|^{\sigma} \,d x +  \alpha b_n \int_{\R} \left| \int_0^t P_{t-s}(v(s)^{\alpha-1}w(s))\,d s \right|^{\sigma} \,d x \right].
			\end{align}
			Using Jensen inequality,
			\begin{align*}
				\int_{\R} |P_t (\tilde{\varphi})|^{\sigma} \,d x =  &  \int_{\R} \left| \int_{\R} p_t (x-y ) \tilde{\varphi} (y) \,d y \right|^{\sigma} \,d x
				\\
				\le & \int_{\R} \int_{\R} p_t (x-y ) |\tilde{\varphi} (y)|^{\sigma} \,d y \,d x
				\\
				= & \lVert \tilde{\varphi} \rVert_{\infty}^{\sigma} < \infty.
			\end{align*}
			By definition of $P_{t-s}$ and using Jensen's inequality once more we have, 
			\begin{align*}
				\int_{\R} \left| \int_0^t P_{t-s}(v(s)^{\alpha-1}w(s))\,d s \right|^{\sigma} \,d x = &  
				\int_{\R} \left| \int_0^t \int_{\R} p_{t-s}(x-y) v(s,y)^{\alpha-1}w(s,y) \,d y \,d s \right|^{\sigma} \,d x
				\\
				\le & t^{\sigma-1} \int_{\R}  \int_0^t \int_{\R} p_{t-s}(x-y) |v(s,y)^{\alpha-1}w(s,y)| ^{\sigma}  \,d y \,d s\,d x.				
			\end{align*}
			Using Fubini's theorem, as all terms are non-negative, integrating out $x$ in the above we have 
			\begin{align}\label{eq:pde_Z_properties_2}
				\int_{\R} \left| \int_0^t P_{t-s}(v(s)^{\alpha-1}w(s))\,d s \right|^{\sigma} \,d x			
				& \le  t^{\sigma-1} \int_0^t \int_{\R} |v(s,y)^{\alpha-1}w(s,y)| ^{\sigma}  \,d y \,d s	
				\nonumber \\		
				& \le  C_1 \int_0^t \lVert v(s) \rVert_{\infty}^{\sigma(\alpha-1)} \lVert w(s) \rVert_{\sigma}^{\sigma} \, d s.
			\end{align}
			
			Using \eqref{eq:pde_Z_properties_2} in \eqref{eq:w_Lp} we have
			\begin{align}
				\lVert w(t) \rVert_{\sigma}^{\sigma} \le C \lVert \tilde{\varphi} \rVert_{\infty}^{\sigma} + C \int_0^t \lVert v(s) \rVert_{\infty}^{\sigma(\alpha-1)} \lVert w(s) \rVert_{\sigma}^{\sigma} \, d s. 
			\end{align}
			Again by Gr\"{o}nwall's inequality
			\begin{align}
				\lVert w(t) \rVert_{\sigma}^{\sigma}  \le C \lVert \tilde{\varphi} \rVert_{\infty}^{\sigma} \exp \left( \int_0^t \lVert v(s) \rVert_{\infty}^{\sigma(\alpha-1)}  \, d s\right) < \infty
			\end{align}
			and the required result follows as in the previous part.
		\end{proof}	
		
		\section{Proof of Proposition \ref{prop:time_dep_mart_prob}}\label{app:time_dep_mart}
		
		Since the proof is a little long we carry it out in two steps. The first shows that a solution of the weak form \eqref{eq:main_spde_weak_form} also satisfies a time-dependent version as described in \eqref{eq:time_dep_weak_form_0}. The proof follows the argument of \cite[Theorem 2.1]{shiga94}.
		
		\begin{lem}\label{lem:time_dep_weak_form}
			Let $T>0$ be fixed and assume that $Y$ satisfies \eqref{eq:main_spde_weak_form} and the following conditions hold for $\psi :[0,T]\times \R \to [0,\infty)$.
			\begin{itemize}
				\item[(i)]  The map $[0, T] \ni s \mapsto \psi_s  \in \bf{L}^{\eta}(\R)\cap \bf{L}^{\rho}(\R)$ is continuous, for some fixed $\eta \in (\frac{1}{\beta}, \alpha)$ and $\rho \in (\alpha, \frac{\alpha}{\beta}\wedge 2 )$.	
				
				\item[(ii)] $	\sup_{s \le T} \lVert \frac{\partial}{\partial s}\psi_s\rVert_{\frac{\alpha\beta}{\alpha\beta-1}} < \infty$, and
				
				\item[(iii)]  $s \mapsto \frac{\partial^2}{\partial x^2} \psi_s$ is continuous in $\bf{L}^{\infty}(\R)$, i.e. $\lVert \frac{\partial^2}{\partial x^2} \psi_s - \frac{\partial^2}{\partial x^2} \psi_t \rVert_{\infty} \to 0 $	as $|s-t|\to 0$.
			\end{itemize}
			Then for each $t \in [0,T]$, we have
			\begin{align}\label{eq:time_dep_weak_form_0}
				\langle  Y_t, \psi_t \rangle  =  \langle  Y_0, \psi_0  \rangle  + \int_0^t \langle  Y_s, \left(\frac{1}{2}\frac{\partial^2}{\partial x^2}  +\frac{\partial}{\partial s}\right)\psi_s  \rangle \,d s  +  \int_0^t \int_{\R} (Y_{s-}(x))^{\beta} \psi_s(x)  L^{\alpha} (d x, \, d s).
			\end{align}
		\end{lem}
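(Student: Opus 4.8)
The plan is to follow the classical time-discretization argument of Shiga, reducing \eqref{eq:time_dep_weak_form_0} to the already available time-independent weak form \eqref{eq:main_spde_weak_form}. A first, routine step is to extend \eqref{eq:main_spde_weak_form}: for each fixed time $r$ one approximates $\psi_r$ by Schwartz functions and passes to the limit in \eqref{eq:main_spde_weak_form}, the convergence of the stochastic integral being controlled by the $q$-th moment bound of Proposition~\ref{prop:moment_est_Y_2} exactly as in the proof of Lemma~\ref{lem:stoch_int_mart} (note $\psi_r\in L^1\cap L^2\subset L^\alpha$ and $\psi_r$, $\tfrac{\partial^2}{\partial x^2}\psi_r$ are bounded, so all pairings and integrals are well defined). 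This shows \eqref{eq:main_spde_weak_form} holds for every time-independent $\phi$ with $\phi\in L^1(\R)\cap L^2(\R)$ and $\tfrac{\partial^2}{\partial x^2}\phi\in L^\infty(\R)$, hence in particular with $\phi=\psi_r$ for any fixed $r\in[0,T]$.

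Next, fix $t\in[0,T]$ and take a nested sequence of partitions $0=t_0<t_1<\dots<t_m=t$ with mesh tending to $0$; for $s\in(t_i,t_{i+1}]$ write $\underline s=t_i$ and $\overline s=t_{i+1}$. Telescoping $\langle Y_t,\psi_t\rangle-\langle Y_0,\psi_0\rangle=\sum_i\bigl(\langle Y_{t_{i+1}},\psi_{t_{i+1}}\rangle-\langle Y_{t_i},\psi_{t_i}\rangle\bigr)$, splitting each summand as $\bigl(\langle Y_{t_{i+1}},\psi_{t_{i+1}}\rangle-\langle Y_{t_i},\psi_{t_{i+1}}\rangle\bigr)+\langle Y_{t_i},\psi_{t_{i+1}}-\psi_{t_i}\rangle$, applying the extended weak form between $t_i$ and $t_{i+1}$ with the fixed test function $\psi_{t_{i+1}}$ to the first bracket, and using $\psi_{t_{i+1}}-\psi_{t_i}=\int_{t_i}^{t_{i+1}}\tfrac{\partial}{\partial s}\psi_s\,ds$ (a Bochner integral in $L^{\frac{\alpha\beta}{\alpha\beta-1}}$, which pairs with the finite measure $Y_{t_i}$) in the second, I obtain the exact identity
\begin{align*}
	\langle Y_t,\psi_t\rangle-\langle Y_0,\psi_0\rangle
	= \int_0^t\Bigl\langle Y_s,\tfrac12\tfrac{\partial^2}{\partial x^2}\psi_{\overline s}\Bigr\rangle ds
	+\int_0^t\!\!\int_{\R}(Y_{s-}(x))^{\beta}\psi_{\overline s}(x)\,L^{\alpha}(ds\,dx)
	+\int_0^t\Bigl\langle Y_{\underline s},\tfrac{\partial}{\partial s}\psi_s\Bigr\rangle ds ,
\end{align*}
and it remains to pass to the limit as the mesh shrinks in each of the three terms.

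The two Lebesgue integrals are handled pathwise by dominated convergence. For the first, $\bigl|\langle Y_s,\tfrac12\tfrac{\partial^2}{\partial x^2}(\psi_{\overline s}-\psi_s)\rangle\bigr|\le\tfrac12\langle Y_s,1\rangle\,\lVert\tfrac{\partial^2}{\partial x^2}\psi_{\overline s}-\tfrac{\partial^2}{\partial x^2}\psi_s\rVert_\infty\to0$ for every $s$ by (iii), and $s\mapsto\langle Y_s,1\rangle$ is cadlag, hence bounded on $[0,t]$. For the third, $\langle Y_{\underline s},\tfrac{\partial}{\partial s}\psi_s\rangle\to\langle Y_{s-},\tfrac{\partial}{\partial s}\psi_s\rangle=\langle Y_s,\tfrac{\partial}{\partial s}\psi_s\rangle$ for every $s$ outside the countable set of jump times of $Y$ (since $\underline s\uparrow s$), and the integrand is bounded by $\bigl(\sup_{r\le t}\lVert Y_r\rVert_{\alpha\beta}\bigr)\,\sup_{s\le T}\lVert\tfrac{\partial}{\partial s}\psi_s\rVert_{\frac{\alpha\beta}{\alpha\beta-1}}<\infty$, using (ii) together with the facts that $r\mapsto\lVert Y_r\rVert_{\alpha\beta}$ is cadlag (hence locally bounded) and is conjugate-exponent paired with $\tfrac{\partial}{\partial s}\psi_s$. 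Thus these terms converge to $\int_0^t\langle Y_s,\tfrac12\tfrac{\partial^2}{\partial x^2}\psi_s\rangle ds$ and $\int_0^t\langle Y_s,\tfrac{\partial}{\partial s}\psi_s\rangle ds$ respectively.

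The stochastic term is the main obstacle, and the only place the $\alpha$-stable structure really enters, since no $L^2$ isometry is available and one must work with $q$-th moments for $q<\alpha$ via the compensated Poisson representation. Writing $\phi_s:=\psi_{\overline s}-\psi_s$ (deterministic once the partition is fixed, so the integrand is predictable), I would argue exactly as in the proof of Proposition~\ref{prop:moment_est_Y_2}: pass to the compensated PRM representation, apply Burkholder--Davis--Gundy with exponent $q/2<1$, Jensen with an exponent $p\in(\alpha,\min(2,\alpha/\beta))$, integrate out the jump-size variable, and invoke Proposition~\ref{prop:moment_est_Y_2} with the exponents $\beta p$ and $\beta q$, which both lie in $(1,\alpha)$ once $q$ is chosen in $(1/\beta,\alpha)$ --- here $\alpha\beta>1$ is precisely what makes these intervals nonempty and keeps $\beta p,\beta q$ below $\alpha$. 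After localizing by $\tau_N=\inf\{u:\int_0^u\lVert Y_r\rVert_{\alpha\beta}^{\alpha\beta}\,dr\ge N\}$ and letting $N\to\infty$, this yields
\begin{align*}
	\E\left|\int_0^t\!\!\int_{\R}(Y_{s-}(x))^{\beta}\phi_s(x)\,L^{\alpha}(ds\,dx)\right|^{q}
	\le C\Bigl(\int_0^t e^{C_2 s}\lVert\phi_s\rVert_p^p\,ds\Bigr)^{q/p}+C\int_0^t e^{C_2 s}\lVert\phi_s\rVert_q^q\,ds .
\end{align*}
By (i) and log-convexity of $L^r$ norms, $r\mapsto\psi_r$ is continuous and uniformly bounded as a map into $L^p(\R)$ and into $L^q(\R)$ (note $p,q\in(1,2)$), so $\lVert\phi_s\rVert_p,\lVert\phi_s\rVert_q\to0$ for every $s$ as the mesh shrinks while staying uniformly bounded; dominated convergence then drives the right-hand side to $0$. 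Hence the stochastic term converges in $L^q(\pr)$, and along a subsequence of the partitions it converges a.s. to $\int_0^t\int_{\R}(Y_{s-}(x))^{\beta}\psi_s(x)\,L^{\alpha}(ds\,dx)$. Passing to this subsequence in the displayed identity gives \eqref{eq:time_dep_weak_form_0} a.s. for the fixed $t$, and since $t\in[0,T]$ was arbitrary the proof is complete.
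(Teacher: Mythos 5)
Your proof is correct and follows the same Shiga-type time-discretization skeleton as the paper: telescope $\langle Y_t,\psi_t\rangle-\langle Y_0,\psi_0\rangle$ over a partition, apply the weak form \eqref{eq:main_spde_weak_form} with a frozen test function on each subinterval, and pass to the limit in the two Lebesgue integrals by dominated convergence and in the stochastic integral by a moment estimate. Two differences are worth recording. First, you freeze the test function at the right endpoint and the measure at the left endpoint (the paper does the opposite); this forces you to identify $\lim Y_{\underline{s}}=Y_{s-}$ and discard the countable set of jump times, which you do correctly. Second, and more substantively, your treatment of the stochastic term is genuinely different: the paper splits $L^{\alpha}$ into small and large jumps and controls the small-jump part by Kunita's inequality, i.e.\ by a second moment involving $\E(Y_{s-}(x)^{2\beta})$, whereas you rerun the Burkholder--Davis--Gundy/Jensen scheme from the proof of Proposition \ref{prop:moment_est_Y_2} with exponents $p\in(\alpha,\min(2,\alpha/\beta))$ and $q\in(1/\beta,\alpha)$, so that only moments $\E(Y_{s-}(x)^{\beta p})$ and $\E(Y_{s-}(x)^{\beta q})$ of orders strictly between $1$ and $\alpha$ are needed. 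This is in fact the more robust choice: Proposition \ref{prop:moment_est_Y_2} only covers exponents below $\alpha$, and $2\beta<\alpha$ fails on part of the range $\beta\in(1/\alpha,1)$ (e.g.\ $\alpha=3/2$, $\beta=9/10$), so your version of this step applies uniformly where the paper's, as written, does not. The one place where you, like the paper, are somewhat informal is the preliminary extension of \eqref{eq:main_spde_weak_form} from $\mc{S}(\R)$ to test functions that are merely in $L^1\cap L^2$ with bounded continuous second derivative; you at least flag this and sketch the approximation, which the paper uses tacitly without comment.
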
	
		
		\begin{proof}
			Fix $0\le t \le T$ and let $\Delta = \{0=t_0 < t_1 < \cdots < t_N = t\}$ be a partition of $[0,t]$. For all $s \in [t_{i-1},t_i]$, denote $\pi_{\Delta}(s) = t_{i-1}$ and $\bar{\pi}_{\Delta}(s)= t_i$. Then we have
			\begin{align}\label{eq:time_dep_weak_form_1}
				& \langle Y_t, \psi_t \rangle - \langle Y_0, \psi_0 \rangle
				\nonumber \\
				= & \sum_{i=1}^{N}  (\langle Y_{t_i}, \psi_{t_i} - \psi_{t_{i-1}} \rangle - \langle Y_{t_i}- Y_{t_{i-1}}, \psi_{t_{i-1} } \rangle ) 
				\nonumber \\
				= & \sum_{i=1}^N \left[ \int_{t_{i-1}}^{t_i} \langle Y_{\bar{\pi}_{\Delta}(s)}, \dot{\psi}_s \rangle \,d s  + \int_{t_{i-1}}^{t_i} \langle Y_s, \frac{1}{2}\frac{\partial^2}{\partial x^2}\psi_{\pi_{\Delta}(s)} \rangle \,d s + \int_{t_{i-1}}^{t_i} \int_{\R} Y_{s-}(x)^{\beta}\psi_{\pi_{\Delta}(s)}(x)L^{\alpha}(d x, \, d s)\right] 
				\nonumber \\
				= & \int_0^t \left( \langle Y_{\bar{\pi}_{\Delta}(s)}, \dot{\psi}_s \rangle  +  \langle Y_s, \frac{1}{2}\frac{\partial^2}{\partial x^2}\psi_{\pi_{\Delta}(s)} \rangle \right) \,d s + \int_0^t \int_{\R} Y_{s-}(x)^{\beta}\psi_{\pi_{\Delta}(s)}(x)L^{\alpha}(d x, \, d s)
			\end{align}
			
			To prove the lemma we have to show that, as $|\Delta| \to 0$
			\begin{itemize}
				\item[(a)]  $\int_0^t \langle Y_{\bar{\pi}_{\Delta}(s)}, \dot{\psi}_s \rangle \,d s \to \int_0^t \langle Y_s, \dot{\psi}_s \rangle \,d s$ a.s. , 
				
				\item[(b)] $ \int_0^t \langle Y_s, \frac{1}{2}\frac{\partial^2}{\partial x^2}\psi_{\pi_{\Delta}(s)} \rangle  \,d s \to \int_0^t \langle  Y_s, \frac{1}{2}\frac{\partial^2}{\partial x^2} \psi_s  \rangle \,d s$ a.s. , and
				
				\item[(c)] $\int_0^t \int_{\R} Y_{s-}(x)^{\beta}\psi_{\pi_{\Delta}(s)}(x)L^{\alpha}(\,d s, \,d x) \to \int_0^t \int_{\R^d} (Y_{s-}(x))^{\beta} \psi_s(x)  L^{\alpha} (d x, \, d s)$ in probability.
			\end{itemize}
			
			For (a) and (b), we need to show that the integrand converges pointwise (i.e. for each $s$) and that the dominated convergence theorem (DCT) can be applied. 
			
			(a)  Recall that $s \mapsto Y_s$ is right continuous measure-valued a.s. and by the definition of weak convergence we have, for each $s\in [0,t]$  
			\begin{align*}
				|\langle Y_{\bar{\pi}_{\Delta}(s)} - Y_s, \dot{\psi}_s \rangle|  \to 0, \text{	a.s.}
			\end{align*}
			as $\dot{\psi}_s$ is bounded and continuous (in the space variable). 			
			
			By H\"{o}lder's inequality, as $\alpha\beta >1$, we have a.s.
			\begin{align*}
				|\langle Y_{\bar{\pi}_{\Delta}(s)} - Y_s, \dot{\psi}_s \rangle| 
				\le & \int_{\R} \lvert Y_s(x) \rvert \lvert\dot{\psi}_s(x) \rvert \,d x + \int_{\R} \lvert Y_{\bar{\pi}_{\Delta}(s)}(x) \rvert \lvert\dot{\psi}_s(x) \rvert \,d x 
				\\
				& \le \lVert Y_{s}\rVert_{\alpha\beta} \lVert \dot{\psi}_s\rVert_{\frac{\alpha\beta}{\alpha\beta-1}} + \lVert Y_{\bar{\pi}_{\Delta}(s)}\rVert_{\alpha\beta} \lVert \dot{\psi}_s\rVert_{\frac{\alpha\beta}{\alpha\beta-1}}.
			\end{align*}		
			Therefore, a.s.
			\begin{align*}
				\int_0^t |\langle Y_{\bar{\pi}_{\Delta}(s)} - Y_s, \dot{\psi}_s \rangle| \,d s & \le (\sup_{s\le t} \lVert \dot{\psi}_s\rVert_{\frac{\alpha\beta}{\alpha\beta-1}}) \left[ \int_0^t \lVert Y_{s}\rVert_{\alpha\beta} \,d s + \int_0^t \lVert Y_{\bar{\pi}_{\Delta}(s)}\rVert_{\alpha\beta}  \, d s\right] 
				\\
				& = (\sup_{s\le t} \lVert \dot{\psi}_s\rVert_{\frac{\alpha\beta}{\alpha\beta-1}}) \left[ \left(\int_0^t \lVert Y_{s}\rVert_{\alpha\beta} \,d s\right)^{\frac{\alpha\beta}{\alpha\beta}} + \left(\int_0^t \lVert Y_{\bar{\pi}_{\Delta}(s)}\rVert_{\alpha\beta}  \, d s\right)^{\frac{\alpha\beta}{\alpha\beta}}\right]
				\\
				& \le (\sup_{s\le t} \lVert \dot{\psi}_s\rVert_{\frac{\alpha\beta}{\alpha\beta-1}})   \left[ \left(t^{\alpha\beta} \frac{1}{t}\int_0^t \lVert Y_{s}\rVert_{\alpha\beta}^{\alpha\beta} \,d s\right)^{\frac{1}{\alpha\beta}} + \left(t^{\alpha\beta}\frac{1}{t}\int_0^t \lVert Y_{\bar{\pi}_{\Delta}(s)}\rVert_{\alpha\beta}^{\alpha\beta}  \, d s\right)^{\frac{1}{\alpha\beta}}\right],   			
			\end{align*} using Jensen in the last line. The quantity above is finite by assumption (ii) and the fact that $Y \in L^{\alpha\beta}_{loc}(\R_+ \x \R)$. This implies that $s \mapsto |\langle Y_{\bar{\pi}_{\Delta}(s)} - Y_s, \dot{\psi}_s \rangle|$ is a.s. integrable on $[0,t]$.

			(b) Fix $s \in [0,t]$. Then 
			\begin{align*}
				|\langle Y_s, \frac{\partial^2}{\partial x^2}(\psi_s- \psi_{\pi_{\Delta}(s)}) \rangle| & \le \left(\sup_{x \in \R} |\frac{\partial^2}{\partial x^2} (\psi_s- \psi_{\pi_{\Delta}(s)})(x)|\right) \langle Y_s, 1\rangle, \text{ a.s.}
			\end{align*}
			We know that $Y_s$ is a finite measure, i.e. $\langle Y_s, 1\rangle< \infty$. Thus the RHS above converges to $0$ by our assumption (iii). 
			
			Let us introduce a new stopping time: for $k \in \N$, $$ \sigma_k = \inf\{s\ge 0 \mid \langle Y_s, 1 \rangle > k\}.$$ 
			For $s \le \sigma_k\wedge t$ we have, a.s.
			\begin{align*}
				|\langle Y_s, \frac{\partial^2}{\partial x^2}(\psi_s- \psi_{\pi_{\Delta}(s)}) \rangle| & \le \left(\sup_{x \in \R} |\frac{\partial^2}{\partial x^2} (\psi_s- \psi_{\pi_{\Delta}(s)})(x)| \right) \langle Y_s, 1\rangle\\
				& \le 2k \left(\sup_{\stackrel{s \le t}{x \in \R}} |\frac{\partial^2}{\partial x^2} \psi_s (x)|\right) < \infty
			\end{align*}
			by hypothesis. As $\sigma_k \to \infty$ as $k \to \infty$ the above is true for all $s \le t$. Thus we can apply DCT to obtain (b).
			
			(c)  Recall the notations introduced in the beginning of Section \ref{sec:prelim}. 	We have 
			\begin{align}\label{eq:stable_noise_PRM}
				L^{\alpha}(\,d x, \, d s) = \int_0^{\infty} z \tilde{N} (\,d x, \, d z,\, d s )	
			\end{align} 
			where $N(\,d x, \,d z,\,d s)$ is a PRM on $\R\x(0,\infty)^2$ with intensity $d x \, m_0(d z)\, d s$.
			
			Note that 
			\begin{align}\label{eq:time_dep_weak_form_2}
				& \int_0^t \int_{\R} Y_{s-}(x)^{\beta}(\psi_{\pi_{\Delta}(s)}(x) - \psi_s(x))L^{\alpha}(d x, \, d s) 
				\nonumber \\
				= & \int_0^t \int_0^1 \int_{\R} Y_{s-}(x)^{\beta}(\psi_{\pi_{\Delta}(s)}(x) - \psi_s(x))z \tilde{N}(d x , \, d z, \, d s)
				\nonumber \\
				& + \int_0^t \int_1^{\infty} \int_{\R} Y_{s-}(x)^{\beta}(\psi_{\pi_{\Delta}(s)}(x) - \psi_s(x))z \tilde{N}(d x, \, d z, \, d s).
			\end{align}
			Here we note that $\int_0^1 z^{\rho} m_0(dz) <\infty$ as $\rho \ge \alpha$ and $\int_1^{\infty} z^{\eta} m_0(dz) <\infty$ as $\eta < \alpha$.  Using the Burkholder-Davis-Gundy inequality for the first term, Fubini's theorem and Proposition \ref{prop:moment_est_Y_2} we have
			\begin{align}\label{eq:time_dep_weak_form_3}
				& \E \left( \left| \int_0^t \int_0^1 \int_{\R} Y_{s-}(x)^{\beta}(\psi_{\pi_{\Delta}(s)}(x) - \psi_s(x))z \tilde{N}(d x, \, d z, \, d s) \right|^{\rho} \right)
				\nonumber \\
				\le & C_{\rho}  \E \left( \left| \int_0^t \int_0^1 \int_{\R} Y_{s-}(x)^{2\beta}|\psi_{\pi_{\Delta}(s)}(x) - \psi_s(x)|^2 z^2 N(d x, \, d z, \, d s) \right|^{\rho/2} \right)
				\nonumber \\
				\le & C_{\rho} \E \left( \int_0^t \int_0^1 \int_{\R}  Y_{s-}(x)^{\rho \beta}|\psi_{\pi_{\Delta}(s)}(x) - \psi_s(x)|^{\rho}z^{\rho} N(d x, \, d z, \, d s)  \right)
				\nonumber \\
				\le & C_{\rho} \int_0^t \int_{\R} \left| \psi_{\pi_{\Delta}(s)}(x) - \psi_s(x) \right|^{\rho} \E( Y_{s-}(x)^{\rho\beta}) d x \, d s
				\nonumber \\
				\le & C_{\rho}  \left( \sup_{s} \| \psi_{\pi_{\Delta} (s)} - \psi_s\|_{\rho} \right)^{\rho} \left( \int_0^t  s^{-\rho\beta/2}\,d s\right)
				\nonumber \\
				= & C_{\rho}  \left( \sup_{s} \| \psi_{\pi_{\Delta} (s)} - \psi_s\|_{\rho} \right)^{\rho} t^{1-\rho\beta/2} \to 0
			\end{align}
			as $|\Delta| \to 0$. The second inequality again uses the fact about random sums described in \cite[Lemma 8.22]{pz07} as $\rho/2<1$. The last line follows from our assumption (i) of continuity of the map $s \mapsto \psi_s \in {\bf{L}}^{\rho}(\R)$, which implies uniform continuity of the same on $[0,t]$. For the second term in \eqref{eq:time_dep_weak_form_2} we proceed as in the previous calculation. Observe that $1<\eta \beta <\alpha$ by assumption and thus Proposition \ref{prop:moment_est_Y_2} is applicable in the following.
			\begin{align}\label{eq:time_dep_weak_form_4}
				& \E \left( \left| \int_0^t \int_1^{\infty} \int_{\R} Y_{s-}(x)^{\beta}(\psi_{\pi_{\Delta}(s)}(x) - \psi_s(x))z \tilde{N}(d x, \, d z, \, d s) \right|^{\eta} \right)
				\nonumber \\
				\le & \E \left( \left| \int_0^t \int_1^{\infty} \int_{\R} Y_{s-}(x)^{2\beta}(\psi_{\pi_{\Delta}(s)}(x) - \psi_s(x))^2z^2 N(d x, \, d z, \, d s) \right|^{\eta/2} \right)				
				\nonumber \\
				\le & \E  \int_0^t \int_1^{\infty} \int_{\R} \left| Y_{s-}(x)^{\eta\beta}(\psi_{\pi_{\Delta}(s)}(x) - \psi_s(x))^{\eta}z^{\eta} \right| N(\,d x, \, d z, \, d s) 
				\nonumber \\
				= &  \E  \int_0^t \int_1^{\infty} \int_{\R}  |Y_{s-}(x)|^{\eta\beta}\left|\psi_{\pi_{\Delta}(s)}(x) - \psi_s(x)\right|^{\eta}z^{\eta}  \, dx \, m_0 (dz) \, ds	
				\nonumber \\
				\le & C_{\eta} \int_0^t \int_{\R}  \E\left(|Y_{s-}(x)|^{\eta\beta}\right) \left|\psi_{\pi_{\Delta}(s)}(x) - \psi_s(x)\right|^{\eta} \, dx \, ds
				\nonumber \\
				\le &  C_{\eta} \int_0^t \int_{\R}  s^{\eta\beta/2} \left|\psi_{\pi_{\Delta}(s)}(x) - \psi_s(x)\right|^{\eta} \, dx \, ds
				\nonumber \\
				\le & C_{\eta} \left( \sup_{s} \| \psi_{\pi_{\Delta} (s)} - \psi_s\|_{\eta} \right)^{\eta} t^{1-\eta\beta/2}.
			\end{align}
		 By assumption (i) the RHS above converges to $0$ as $|\Delta| \to 0$. The calculations \eqref{eq:time_dep_weak_form_2}, \eqref{eq:time_dep_weak_form_3} and \eqref{eq:time_dep_weak_form_4} together prove (c). 
		\end{proof}
		
		In the last lemma we show how to turn the time dependent weak form of our SPDE \eqref{eq:time_dep_weak_form_0}, which was proved in the previous result, into a martingale. This will complete the proof of Proposition \ref{prop:time_dep_mart_prob}.
		
		\begin{lem}\label{lem:weak_form_to_mart}
			If \eqref{eq:time_dep_weak_form_0} holds for some smooth $\psi: [0, T] \x R \to \R$ and $0\le t \le T$, then 
			\begin{align}\label{eq:weak_form_to_mart_0}
				M^Y_t(\psi)  = e^{-\langle Y_t, \psi_t\rangle} - e^{-\langle Y_0, \psi_0\rangle}  - \int_0^t  e^{- \langle Y_{s-}, \psi_s\rangle } \left( -\langle Y_{s-}, \frac{1}{2} \Delta \psi_s + \frac{\partial}{\partial s}\psi_s  \rangle + \langle Y_{s-}^{\alpha \beta}, \psi_s^{\alpha} \rangle\right)  \,d s
			\end{align}
			is an $\mc{F}^Y$-martingale.
		\end{lem}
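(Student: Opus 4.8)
The plan is to recognise $t\mapsto\langle Y_t,\psi_t\rangle$ as a real-valued semimartingale, read off its canonical decomposition from Lemma~\ref{lem:time_dep_weak_form}, and apply It\^o's formula to $f(x)=e^{-x}$. Writing $X_t:=\langle Y_t,\psi_t\rangle$, the time-dependent weak form \eqref{eq:time_dep_weak_form_0} together with the Poisson representation \eqref{eq:stable_noise_PRM} of $L^{\alpha}$ gives
\begin{align*}
	X_t = X_0 &+ \int_0^t \langle Y_s,\tfrac12\Delta\psi_s+\tfrac{\partial}{\partial s}\psi_s\rangle\,d s \\
	&+ \int_0^t\!\!\int_0^{\infty}\!\!\int_{\R} z\,Y_{s-}(y)^{\beta}\psi_s(y)\,\tilde N(\,d s\,d z\,d y),
\end{align*}
so $X$ is the sum of an absolutely continuous finite-variation part and a purely discontinuous martingale whose jump at a point $(s,z,y)$ of $N$ equals $z\,Y_{s-}(y)^{\beta}\psi_s(y)\ge 0$; nonnegativity of the jumps uses $\psi\ge 0$, $Y\ge 0$ and that $L^{\alpha}$ has no negative jumps.

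Applying It\^o's formula for semimartingales with jumps (cf.\ \cite[\S4.4]{app09}) to $f$ and collecting the drift and jump-compensator contributions yields
\begin{align*}
	e^{-X_t}-e^{-X_0} &= N_t - \int_0^t e^{-X_{s-}}\langle Y_{s-},\tfrac12\Delta\psi_s+\tfrac{\partial}{\partial s}\psi_s\rangle\,d s \\
	&\quad + \int_0^t\!\!\int_0^{\infty}\!\!\int_{\R} e^{-X_{s-}}\bigl(e^{-z Y_{s-}(y)^{\beta}\psi_s(y)}-1+z Y_{s-}(y)^{\beta}\psi_s(y)\bigr)\,d s\,m_0(d z)\,d y,
\end{align*}
where $N_t=\int_0^t\!\int_0^{\infty}\!\int_{\R} e^{-X_{s-}}(e^{-z Y_{s-}(y)^{\beta}\psi_s(y)}-1)\,\tilde N(\,d s\,d z\,d y)$ is a local martingale. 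The decisive computation is the identity $\int_0^{\infty}(e^{-\lambda z}-1+\lambda z)\,m_0(d z)=\lambda^{\alpha}$ for $\lambda\ge 0$, which is precisely the L\'evy--Khintchine exponent underlying \eqref{eq:stable_mart_measure_def} and follows from the substitution $u=\lambda z$ and two integrations by parts using $1<\alpha<2$. Taking $\lambda=Y_{s-}(y)^{\beta}\psi_s(y)$ and integrating in $y$ turns the last display into $\int_0^t e^{-X_{s-}}\langle Y_{s-}^{\alpha\beta},\psi_s^{\alpha}\rangle\,d s$; since the finite-variation part is absolutely continuous one may replace $Y_s$ by $Y_{s-}$ throughout, and what remains is exactly $M^Y_t(\psi)=N_t$. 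Before invoking It\^o's formula I would record, mimicking the bookkeeping in Lemma~\ref{lem:time_dep_weak_form}, that hypotheses (i)--(iii) together with Proposition~\ref{prop:moment_est_Y_2} make all the integrals above a.s.\ finite, so its use is legitimate.

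It then remains to upgrade $M^Y(\psi)$ from a local martingale to a martingale, which I would do exactly as in the proof of Proposition~\ref{prop:M^Y_mart} by checking it is of class DL. Since $|e^{-X_t}-e^{-X_0}|\le 1$, it suffices to bound
\begin{align*}
	\sup_{\tau\le t}\E\left(\left|\int_0^{\tau} e^{-X_{s-}}\bigl(-\langle Y_{s-},\tfrac12\Delta\psi_s+\tfrac{\partial}{\partial s}\psi_s\rangle+\langle Y_{s-}^{\alpha\beta},\psi_s^{\alpha}\rangle\bigr)\,d s\right|^{1+\epsilon}\right)<\infty
\end{align*}
for a fixed small $\epsilon<\tfrac1\beta-1$, the supremum running over $\mc F^Y$-stopping times $\tau\le t$. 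By Jensen's inequality, Fubini and $e^{-X_{s-}}\le 1$ this reduces to integrating $\E\, Y_{s-}(x)^{1+\epsilon}$ and $\E\, Y_{s-}(x)^{\alpha\beta(1+\epsilon)}$ against suitable norms of $\psi_s$, $\Delta\psi_s$ and $\tfrac{\partial}{\partial s}\psi_s$; both exponents are $<\alpha$, so Proposition~\ref{prop:moment_est_Y_2} bounds these moments by $Ce^{Cs}$. Here hypothesis (ii) --- the uniform $L^{\alpha\beta/(\alpha\beta-1)}$ bound on $\tfrac{\partial}{\partial s}\psi_s$ --- is what lets H\"older's inequality control the $\tfrac{\partial}{\partial s}\psi_s$ term against $\lVert Y_{s-}\rVert_{\alpha\beta}$, while (i) and (iii) control the other two. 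The main obstacle I anticipate is precisely this last integrability bookkeeping --- in particular making the $\tfrac{\partial}{\partial s}\psi_s$ contribution genuinely integrable under only the $L^{\alpha\beta/(\alpha\beta-1)}$ hypothesis --- rather than anything in the It\^o computation, which is routine once the semimartingale decomposition of $X$ is in hand.
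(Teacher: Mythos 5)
Your proof is correct and follows essentially the same route as the paper's: represent $L^{\alpha}$ via the PRM $\tilde N$, apply the jump It\^o formula to $e^{-X_t}$ with $X_t=\langle Y_t,\psi_t\rangle$, and use the identity $\int_0^{\infty}(e^{-\lambda z}-1+\lambda z)\,m_0(d z)=\lambda^{\alpha}$ to identify the compensator contribution with $\langle Y_{s-}^{\alpha\beta},\psi_s^{\alpha}\rangle$, leaving $M^Y_t(\psi)$ equal to the compensated Poisson integral $N_t$. The one point where you diverge --- and which you yourself flag as the anticipated obstacle --- is the local-to-true martingale upgrade: your class-DL route through the drift requires a bound on $\E\,\lVert Y_s\rVert_{\alpha\beta}^{1+\epsilon}$ to handle the $\frac{\partial}{\partial s}\psi_s$ term via H\"older, and this does \emph{not} follow by integrating the pointwise estimate of Proposition \ref{prop:moment_est_Y_2} over all of $\R$; the cleaner finish (implicit in the paper, which simply asserts $N_t$ is a martingale) is to verify integrability of $N_t$ directly, splitting $z\le 1$ and $z>1$, bounding $|e^{-u}-1|$ by $u^{p}$ with $p\in(\alpha,\min(2,\alpha/\beta))$ on small jumps and by $u\wedge 1$ on large ones, and using $\psi_s\in L^1\cap L^2$ together with $\E Y_s(x)=P_s\varphi(x)$.
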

		
		\begin{proof}
			The proof is an application of Ito's formula. Using the representation \eqref{eq:stable_noise_PRM} and some algebraic manipulations we have,
			\begin{align}\label{eq:weak_form_to_mart_1}
				\langle Y_t, \psi\rangle  = &  \langle  Y_0, \psi_0  \rangle  + \int_0^t \langle  Y_s, \frac{1}{2}\Delta \psi_s  + \frac{\partial}{\partial s}\psi_s \rangle \,d s
				+  \int_0^t \int_0^{\infty} \int_{\R} Y_{s-}(x)^{\beta} \psi_s(x) z \tilde{N} (\,d x, \, d z,\, d s )
				\nonumber \\
				= &  \langle  Y_0, \psi_0  \rangle  + \int_0^t \langle  Y_s, \frac{1}{2}\Delta \psi_s + \frac{\partial}{\partial s}\psi_s  \rangle \,d s
				+  \int_0^t \int_0^{1} \int_{\R} Y_{s-}(x)^{\beta} \psi_s(x) z \tilde{N} (\,d x, \, d z,\, d s)
				\nonumber \\
				& + \int_0^t \int_1^{\infty} \int_{\R} Y_{s-}(x)^{\beta} \psi_s(x) z N (\,d x, \, d z,\, d s ) -  \int_0^t \int_1^{\infty} \int_{\R} Y_{s-}(x)^{\beta} \psi_s(x) z \, d x \, m_0(d z)\, d s
				\nonumber \\
				= & \langle  Y_0, \psi_0  \rangle  + \int_0^t  \left[ \langle  Y_s, \frac{1}{2}\Delta \psi_s + \frac{\partial}{\partial s}\psi_s  \rangle  - \int_1^{\infty} z \langle Y_{s-}(\cdot)^{\beta} , \psi_s \rangle \, m_0(dz)\right] \,d s
				\nonumber \\
				& + \int_0^t \int_0^{1} \int_{\R} Y_{s-}(x)^{\beta} \psi_s(x) z \tilde{N} (\,d x, \, d z,\, d t )
				+ \int_0^t \int_1^{\infty} \int_{\R} Y_{s-}(x)^{\beta} \psi_s(x) z N (\,d x, \, d z,\, d t)	
			\end{align}		
			Since $\int_1^{\infty} z \, m_0(dz) = \frac{\alpha}{\Gamma(2-\alpha)}$ the above can be written formally as 
			\begin{align}\label{eq:weak_form_to_mart_2}
				d \langle Y_t, \psi \rangle =&  \left( \langle Y_t, \frac{\Delta}{2}\psi_t + \frac{\partial}{\partial t}\psi_t \rangle - \frac{\alpha}{\Gamma(2-\alpha)} \langle Y_{t-}^{\beta}, \psi_t \rangle\right) \,d t
				\nonumber \\
				& +  \int_0^{1} \int_{\R} Y_{t-}(x)^{\beta} \psi_t(x) z \tilde{N} (\,d t, \,d z,\,d x )
				+ \int_1^{\infty} \int_{\R} Y_{t-}(x)^{\beta} \psi_t(x) z N (\,d t, \,d z,\,d x).
			\end{align}
			
			Ito's formula as given in \cite[Theorem 4.4.7]{app09} can now be applied with $f(x)= e^{-x}$ and 
			\begin{align*}
				G(t) =  \left( \langle Y_t, \frac{\Delta}{2}\psi_t + \frac{\partial}{\partial t}\psi_t \rangle - \frac{\alpha}{\Gamma(2-\alpha)} \langle Y_{t-}^{\beta}, \psi_t \rangle\right).
			\end{align*} 
			We have from \eqref{eq:weak_form_to_mart_2},
			\begin{align}\label{eq:weak_form_to_mart_3}
				& e^{-\langle Y_t, \psi_t  \rangle} - e^{-\langle Y_0, \psi_0\rangle} = f(\langle Y_t, \psi_t  \rangle) - f(\langle Y_0, \psi_0\rangle) 
				\nonumber \\
				= & \int_0^t f'(\langle Y_{s-}, \psi_s\rangle) G(s) \,d s
				+ \int_0^t \int_1^{\infty} \int_{\R} \left[ f(\langle Y_{s-}, \psi_s\rangle + Y_{s-}(x)^{\beta} \psi_s(x) z ) - f(\langle Y_{s-}, \psi_s\rangle) \right] N (\,d x, \, d z,\, d s)
				\nonumber \\
				& + \int_0^t \int_0^1 \int_{\R} \left[ f(\langle Y_{s-}, \psi_s\rangle + Y_{s-}(x)^{\beta} \psi_s(x) z ) - f(\langle Y_{s-}, \psi_s\rangle) \right] \tilde{N}	(\,d x, \, d z,\, d s)
				\nonumber \\
				& + \int_0^t \int_0^1 \int_{\R} \left[ f(\langle Y_{s-}, \psi_s\rangle + Y_{s-}(x)^{\beta} \psi_s(x) z ) - f(\langle Y_{s-}, \psi_s\rangle)-   Y_{s-}(x)^{\beta} \psi_s(x) z f'(\langle Y_{s-}, \psi_s\rangle) \right] \,d s \,m_0(d z)\,d x
				\nonumber \\
				= & - \int_0^t e^{-\langle Y_{s-}, \psi_s\rangle} \left( \langle Y_{s-}, \frac{\Delta}{2}\psi_s +\frac{\partial}{\partial s}\psi_s \rangle - \frac{\alpha}{\Gamma(2-\alpha)} \langle Y_{s-}^{\beta}, \psi_s \rangle\right) \, d s
				\nonumber \\
				& + \int_0^t \int_1^{\infty} \int_{\R} e^{-\langle Y_{s-}, \psi_s\rangle} \left( e^{-Y_{s-}(x)^{\beta} \psi_s(x) z} -1\right) N (\,d x, \, d z,\, d s)
				\nonumber \\
				& + \int_0^t \int_0^1 \int_{\R} e^{-\langle Y_{s-}, \psi_s\rangle} \left( e^{-Y_{s-}(x)^{\beta} \psi_s(x) z} -1 \right) \tilde{N}	(\,d x, \, d z,\, d s)
				\nonumber \\
				& + \int_0^t \int_0^1 \int_{\R} e^{-\langle Y_{s-}, \psi_s\rangle} \left( e^{-Y_{s-}(x)^{\beta} \psi_s(x) z} -1 +   Y_{s-}(x)^{\beta} \psi_s(x) z \right) \, d x \, m_0(d z)\, d s
				\nonumber \\
				= & - \int_0^t e^{-\langle Y_{s-}, \psi_s\rangle} \left( \langle Y_{s-}, \frac{\Delta}{2}\psi_s + \frac{\partial}{\partial s}\psi_s \rangle  - \frac{\alpha}{\Gamma(2-\alpha)} \langle Y_{s-}^{\beta}, \psi_s \rangle\right) \, d s
				\nonumber \\
				& + \left[ \int_0^t \int_1^{\infty} \int_{\R} e^{-\langle Y_{s-}, \psi_s\rangle} \left( e^{-Y_{s-}(x)^{\beta} \psi_s(x) z} -1\right) \tilde{N} (\,d x, \, d z,\, d s) \right.
				\nonumber \\
				& \left. + \int_0^t \int_0^1 \int_{\R} e^{-\langle Y_{s-}, \psi_s\rangle} \left( e^{-Y_{s-}(x)^{\beta} \psi_s(x) z} -1 \right) \tilde{N}	(\,d x, \, d z,\, d s)\right]
				\nonumber \\
				& + \int_0^t \int_1^{\infty} \int_{\R} e^{-\langle Y_{s-}, \psi_s\rangle} \left( e^{-Y_{s-}(x)^{\beta} \psi_s(x) z} -1 \right) \, d x \, m_0(d z)\, d s 
				\nonumber \\
				& +  \int_0^t \int_0^{1} \int_{\R} e^{-\langle Y_{s-}, \psi_s\rangle} \left( e^{-Y_{s-}(x)^{\beta} \psi_s(x) z} -1 + Y_{s-}(x)^{\beta} \psi_s(x) z \right) \, d x \, m_0(d z) \, d s 			
			\end{align}
			adding and subtracting the term $ \int_0^t \int_1^{\infty} \int_{\R} e^{-\langle Y_{s-}, \psi_s\rangle} \left( e^{-Y_{s-}(x)^{\beta} \psi_s(x) z} -1 \right) \,d s \,m_0(d z)\,d x$ in the last line. Note that the term in the square bracket above is 
			\begin{align*}
				M_t(\psi) = \int_0^t \int_0^{\infty} \int_{\R} e^{-\langle Y_{s-}, \psi_s\rangle} \left( e^{-Y_{s-}(x)^{\beta} \psi_s(x) z} -1\right) \tilde{N} (\,d x, \, d z,\, d s) 
			\end{align*} and it is an $\mc{F}^Y$-martingale. We consider the last term in the RHS of \eqref{eq:weak_form_to_mart_3}. Recall the definition of $m_0$ from \eqref{eq:m_0} and the fact that for $y \ge 0$, 
			\begin{align*}
				\frac{\alpha(\alpha -1)}{\Gamma(2-\alpha)} \int_{0+}^{\infty} \left( e^{-\lambda y} -1 + \lambda y\right)\lambda ^{-\alpha -1} \,d \lambda = y^{\alpha}.
			\end{align*}
			From these we can get,
			\begin{align}\label{eq:weak_form_to_mart_4}
				& \int_0^t \int_0^{1} \int_{\R} e^{-\langle Y_{s-}, \psi_s\rangle} \left( e^{-Y_{s-}(x)^{\beta} \psi_s(x) z} -1 + Y_{s-}(x)^{\beta} \psi_s(x) z \right) \, d x \, m_0(d z) \, d s 
				\nonumber \\
				= & 	\int_0^t  \int_{\R} e^{-\langle Y_{s-}, \psi_s\rangle} \left[\frac{\alpha(\alpha -1)}{\Gamma(2-\alpha)}\int_0^{\infty}  \left( e^{-Y_{s-}(x)^{\beta} \psi_s(x) z} -1 + Y_{s-}(x)^{\beta} \psi_s(x) z \right)  z^{-1-\alpha}\,d z \right] \, d x \, d s 	
				\nonumber \\
				& - \int_0^t \int_1^{\infty} \int_{\R} e^{-\langle Y_{s-}, \psi_s\rangle} \left( e^{-Y_{s-}(x)^{\beta} \psi_s(x) z} -1 + Y_{s-}(x)^{\beta} \psi_s(x) z \right) \,d s \,m_0(d z)\,d s
				\nonumber \\ 
				= & \int_0^t  \int_{\R}  e^{-\langle Y_{s-}, \psi_s\rangle} Y_{s-}(x)^{\alpha\beta} \psi_s(x)^{\alpha}  \, dx \, d s 
				\nonumber \\
				& -  \int_0^t \int_1^{\infty} \int_{\R} e^{-\langle Y_{s-}, \psi_s\rangle} \left( e^{-Y_{s-}(x)^{\beta} \psi_s(x) z} -1 + Y_{s-}(x)^{\beta} \psi_s(x) z \right) \, d x \, m_0(d z)\, d s.
			\end{align}
			To finish the proof use the result of \eqref{eq:weak_form_to_mart_4} in \eqref{eq:weak_form_to_mart_3}. By algebraic manipulations we have,
			\begin{align}
				& e^{-\langle Y_t, \psi_t  \rangle} - e^{-\langle Y_0, \psi_0\rangle}
				\nonumber \\
				= & - \int_0^t e^{-\langle Y_{s-}, \psi_s\rangle} \left( \langle Y_{s-}, \frac{\Delta}{2}\psi_s + \frac{\partial}{\partial s}\psi_s \rangle - \frac{\alpha}{\Gamma(2-\alpha)} \langle Y_{s-}^{\beta}, \psi_s \rangle\right) \, d s + M_t(\psi)
				\nonumber \\
				& + \int_0^t \int_1^{\infty} \int_{\R} e^{-\langle Y_{s-}, \psi_s\rangle} \left( e^{-Y_{s-}(x)^{\beta} \psi_s(x) z} -1 \right) \,d s \,m_0(d z)\,d x 
				+ \int_0^t   e^{-\langle Y_{s-}, \psi_s\rangle} \langle Y_{s-}^{\alpha\beta} ,\psi_s^{\alpha} \rangle \,d s
				\nonumber \\
				& -  \int_0^t \int_1^{\infty} \int_{\R} e^{-\langle Y_{s-}, \psi_s\rangle} \left( e^{-Y_{s-}(x)^{\beta} \psi_s(x) z} -1 + Y_{s-}(x)^{\beta} \psi_s(x) z \right) \, d x \, m_0(d z)\, d s,
				\nonumber \\
				= & - \int_0^t e^{-\langle Y_{s-}, \psi_s\rangle} \left( \langle Y_{s-}, \frac{\Delta}{2}\psi_s + \frac{\partial}{\partial s}\psi_s \rangle - \frac{\alpha}{\Gamma(2-\alpha)} \langle Y_{s-}^{\beta}, \psi_s \rangle + \langle Y_{s-}^{\alpha\beta} ,\psi_s^{\alpha} \rangle \right) \, d s + M_t(\psi)
				\nonumber \\
				& -  \int_0^t \int_1^{\infty} \int_{\R} e^{-\langle Y_{s-}, \psi_s\rangle}  Y_{s-}(x)^{\beta} \psi(x) z  \, d x \, m_0(d z)\, d s,
				\nonumber \\
				= & - \int_0^t e^{-\langle Y_{s-}, \psi_s\rangle} \left( \langle Y_{s-}, \frac{\Delta}{2}\psi_s + \frac{\partial}{\partial s}\psi_s \rangle + \langle Y_{s-}^{\alpha\beta} ,\psi_s^{\alpha} \rangle \right) \, d s + M_t(\psi)
			\end{align}
			again using the fact that $\int_1^{\infty} z \, m_0(dz) = \frac{\alpha}{\Gamma(2-\alpha)}$.
		\end{proof}

	\bibliographystyle{alpha} 
	\bibliography{ref}       

\begin{thebibliography}{MMP14}

\bibitem[App09]{app09}
David Applebaum.
\newblock {\em L\'{e}vy processes and stochastic calculus}, volume 116 of {\em
  Cambridge Studies in Advanced Mathematics}.
\newblock Cambridge University Press, Cambridge, second edition, 2009.

\bibitem[BMP10]{bmp10}
K.~Burdzy, C.~Mueller, and E.~A. Perkins.
\newblock Nonuniqueness for nonnegative solutions of parabolic stochastic
  partial differential equations.
\newblock {\em Illinois J. Math.}, 54(4):1481--1507 (2012), 2010.

\bibitem[EK86]{ek}
Stewart~N. Ethier and Thomas~G. Kurtz.
\newblock {\em Markov processes}.
\newblock Wiley Series in Probability and Mathematical Statistics: Probability
  and Mathematical Statistics. John Wiley \& Sons, Inc., New York, 1986.
\newblock Characterization and convergence.

\bibitem[Eva10]{evans10}
Lawrence~C. Evans.
\newblock {\em Partial differential equations}, volume~19 of {\em Graduate
  Studies in Mathematics}.
\newblock American Mathematical Society, Providence, RI, second edition, 2010.

\bibitem[Fle88]{fleisch86}
Klaus Fleischmann.
\newblock Critical behavior of some measure-valued processes.
\newblock {\em Math. Nachr.}, 135:131--147, 1988.

\bibitem[Isc86]{iscoe86}
I.~Iscoe.
\newblock A weighted occupation time for a class of measure-valued branching
  processes.
\newblock {\em Probab. Theory Relat. Fields}, 71(1):85--116, 1986.

\bibitem[MMP14]{mmp14}
Carl Mueller, Leonid Mytnik, and Edwin Perkins.
\newblock Nonuniqueness for a parabolic {SPDE} with
  {$\frac{3}{4}-\epsilon$}-{H}\"{o}lder diffusion coefficients.
\newblock {\em Ann. Probab.}, 42(5):2032--2112, 2014.

\bibitem[MP92]{mp92}
Carl Mueller and Edwin~A. Perkins.
\newblock The compact support property for solutions to the heat equation with
  noise.
\newblock {\em Probab. Theory Related Fields}, 93(3):325--358, 1992.

\bibitem[MP03]{myt-per06}
Leonid Mytnik and Edwin Perkins.
\newblock Regularity and irregularity of {$(1+\beta)$}-stable super-{B}rownian
  motion.
\newblock {\em Ann. Probab.}, 31(3):1413--1440, 2003.

\bibitem[MP11]{myt-per11}
Leonid Mytnik and Edwin Perkins.
\newblock Pathwise uniqueness for stochastic heat equations with {H}\"{o}lder
  continuous coefficients: the white noise case.
\newblock {\em Probab. Theory Related Fields}, 149(1-2):1--96, 2011.

\bibitem[Mue98]{mueller98}
Carl Mueller.
\newblock The heat equation with {L}\'{e}vy noise.
\newblock {\em Stochastic Process. Appl.}, 74(1):67--82, 1998.

\bibitem[Myt96]{mytnik96}
Leonid Mytnik.
\newblock Superprocesses in random environments.
\newblock {\em Ann. Probab.}, 24(4):1953--1978, 1996.

\bibitem[Myt98]{mytnik98}
Leonid Mytnik.
\newblock Weak uniqueness for the heat equation with noise.
\newblock {\em Ann. Probab.}, 26(3):968--984, 1998.

\bibitem[Myt02]{mytnik02}
Leonid Mytnik.
\newblock Stochastic partial differential equation driven by stable noise.
\newblock {\em Probab. Theory Related Fields}, 123(2):157--201, 2002.

\bibitem[Per02]{perkins02}
Edwin Perkins.
\newblock Dawson-{W}atanabe superprocesses and measure-valued diffusions.
\newblock In {\em Lectures on probability theory and statistics
  ({S}aint-{F}lour, 1999)}, volume 1781 of {\em Lecture Notes in Math.}, pages
  125--324. Springer, Berlin, 2002.

\bibitem[Pro05]{protter04}
Philip~E. Protter.
\newblock {\em Stochastic integration and differential equations}, volume~21 of
  {\em Stochastic Modelling and Applied Probability}.
\newblock Springer-Verlag, Berlin, 2005.
\newblock Second edition. Version 2.1, Corrected third printing.

\bibitem[PZ07]{pz07}
S.~Peszat and J.~Zabczyk.
\newblock {\em Stochastic partial differential equations with {L}\'{e}vy
  noise}, volume 113 of {\em Encyclopedia of Mathematics and its Applications}.
\newblock Cambridge University Press, Cambridge, 2007.
\newblock An evolution equation approach.

\bibitem[RY99]{ry99}
Daniel Revuz and Marc Yor.
\newblock {\em Continuous martingales and {B}rownian motion}, volume 293 of
  {\em Grundlehren der Mathematischen Wissenschaften [Fundamental Principles of
  Mathematical Sciences]}.
\newblock Springer-Verlag, Berlin, third edition, 1999.

\bibitem[Shi94]{shiga94}
Tokuzo Shiga.
\newblock Two contrasting properties of solutions for one-dimensional
  stochastic partial differential equations.
\newblock {\em Canad. J. Math.}, 46(2):415--437, 1994.

\bibitem[Wal86]{walsh86}
John~B. Walsh.
\newblock An introduction to stochastic partial differential equations.
\newblock In {\em \'{E}cole d'\'{e}t\'{e} de probabilit\'{e}s de
  {S}aint-{F}lour, {XIV}---1984}, volume 1180 of {\em Lecture Notes in Math.},
  pages 265--439. Springer, Berlin, 1986.

\bibitem[YZ17]{yz17}
Xu~Yang and Xiaowen Zhou.
\newblock Pathwise uniqueness for an {SPDE} with {H}\"{o}lder continuous
  coefficient driven by {$\alpha$}-stable noise.
\newblock {\em Electron. J. Probab.}, 22:Paper No. 4, 48, 2017.

\end{thebibliography}
	
	
\end{document}